\documentclass[10pt,a4paper]{amsart} 
\evensidemargin 0in
\oddsidemargin 0in
\usepackage{geometry}
\geometry{left=3.0cm, right=3.0cm,top=3.2cm,bottom=3.2cm}
\usepackage{tabls}
\usepackage{url}
\usepackage[utf8]{inputenc}
\usepackage{amsfonts}
\usepackage{amssymb}
\usepackage{mathrsfs}
\usepackage{hyperref}
\hypersetup{hidelinks,
	colorlinks=true,
	allcolors=black,
	pdfstartview=Fit,
	breaklinks=true}
\usepackage{amsmath}
\usepackage{amscd}
\usepackage{fancyhdr}
\usepackage{multirow}
\usepackage{enumerate}
\usepackage{tikz}
\usepackage{tikz-qtree}
\usepackage{paralist}
\usepackage{xypic}
\usepackage{graphicx}
\usepackage{cite}
\usepackage{lipsum}
\usepackage{footmisc}
\setlength\footnotemargin{0em}
\bibliographystyle{plain}

\linespread{1.3}
\usepackage[all,cmtip]{xy}

\allowdisplaybreaks

\numberwithin{equation}{section}
\theoremstyle{plain}
\newtheorem{theorem}{Theorem}[section]
\newtheorem{lemma}[theorem]{Lemma}
\newtheorem{proposition}[theorem]{Proposition}
\newtheorem{conjecture}[theorem]{Conjecture}
\newtheorem{remark}[theorem]{Remark}
\newtheorem{definition}[theorem]{Definition}
\newtheorem{corollary}[theorem]{Corollary}
\numberwithin{equation}{section}

\def\a{\alpha}
\def\b{\beta}
\def\d{\delta}

\def\g{\gamma}

\def\n{\vert}

\def\mfa{\mathfrak{a}}
\def\mfb{\mathfrak{b}}

\def\ni{\noindent}

\def\Re{{\rm Re\/}}
\def\Im{{\rm Im\/}}

\def\R{\mathbb{R} }
\def\C{\mathbb{C}}

\def\N{\mathbb{N}}
\def\Z{\mathbb{Z}}


\begin{document}
	
	\title{$K_0$ groups of noncommutative $\mathbb{R}^{2n}$}

	
	\author{Ren Guan}
	\address{School of Mathematics and Statistics, 
		Jiangsu Normal University, Xuzhou 221100, China}
	\email{guanren@jsnu.edu.cn}

	\begin{abstract}
		In this paper we show that the $K_0$ groups of noncommutative $\mathbb{R}^{2n}$ are $\mathbb{Z}$ for $\forall n\in\mathbb{N}^*$ and make an approach to the calculation of the smooth case, which will bring many new sequence problems relating to binomial numbers.
		
	\end{abstract}
	
	\subjclass{05A10, 19A49, 58B34}
	
	\keywords{self-adjoint elements,  noncommutative $\mathbb{R}^{2n}$, projectors, $K_0$ groups}
	
	\maketitle
	
	\tableofcontents
	
	\section{Introduction}
	
	For  $n\in\N$, the noncommutative $\R^{2n}$, or noncommutative flat space-time, denoted by $A(\mathbb{R}^{2n}_\Theta)$, which naturally arised from noncommutative field theory\cite{DN,GV,GW}, are another important class of noncommutative differentiable manifolds besides noncommutative tori\cite{DN,Re}.  When regarded as a subalgebra of the bounded operators $B(H)$ on a separable Hilbert space $H$, $A(\mathbb{R}^{2n}_\Theta)$ can be described as a $*$-algebra  generated by $2n$ self-adjoint operators $x_1,x_2,\ldots,x_{2n}$ satisfying
	\begin{equation}\label{r2n}
		[x_p,x_q]=\begin{cases}
			-i\theta_{pq}, &2|q,~p=q-1\text{ or }2|p,~q=p-1,\\
			-i\theta_{pq}=0, &\text{other cases,}
		\end{cases}
	\end{equation}
	where $-\theta_{qp}=\theta_{pq}>0$ for $2|q$ and $p=q-1$, $1\leq p,q\leq 2n$, $i=\sqrt{-1}$, $\Theta:=\{\theta_{pq}\}_{1\leq p,q\leq2n}$ is a real skew-symmetric matrix. The relation \eqref{r2n} ensures that any product of $x_1,x_2,\ldots,x_{2n}$ can be rearranged to a finite sum of the form
	\begin{equation}
		\sum a_{p_1,p_2,\ldots,p_{2n}}x_1^{p_1}x_2^{p_2}\ldots x_{2n}^{p_{2n}}
	\end{equation}
	with $(p_1,p_2,\ldots,p_{2n})\in\N^{2n}$, $a_{p_1,p_2,\ldots,p_{2n}}\in\C$ and we set $x^0_i=I$, the identity operator of $B(H)$. For example,
	\begin{equation}\begin{aligned}
			x_4x_2x_3x_1&=x_2x_1x_4x_3=(x_1x_2+i\theta_{12})(x_3x_4+i\theta_{34})\\
			&=x_1x_2x_3x_4+i\theta_{34}x_1x_2+i\theta_{12}x_3x_4-\theta_{12}\theta_{34}.
		\end{aligned}
	\end{equation}
	Mimick the definition of the smooth noncommutative tori\cite{EH}, the smooth version $A^\infty(\mathbb{R}^{2n}_\Theta)$ of noncommutative $\R^{2n}$ is the $C^*$-algebra of formal series
	\begin{equation}
		\sum a_{p_1,p_2,\ldots,p_{2n}}x_1^{p_1}x_2^{p_2}\ldots x_{2n}^{p_{2n}}
	\end{equation}
	where the coefficient function $\N^{2n}\owns(p_1,p_2,\ldots,p_{2n})\mapsto a_{p_1,p_2,\ldots,p_{2n}}\in\C$ belongs to
	the Schwartz space $\mathcal{S}(\N^{2n})$, i.e., there is a constant $C_r$ for every $r\geq1$ such that 
	\begin{equation}
		\sup_{(p_1,p_2,\ldots,p_{2n})\in\N^{2n}}\left(1+\sum_{k=1}^{2n}p_k^2\right)^r a_{p_1,p_2,\ldots,p_{2n}}<C_r.
	\end{equation}
	
	For a $*$-algebra $A$, a \emph{projector}(or \emph{projection}) $p$ of $A$ is a matrix with entries in $A$ and satisfies $p^2=p=p^*$. We denote by $P(A)$ the set of projectors of $A$. For any two projectors $p,q\in P(A)$, define
	\begin{equation}\label{kkk}
		p+q:=\begin{pmatrix}
			p&0\\
			0&q
		\end{pmatrix},
	\end{equation}
	and call $p,q$ \emph{equivalent}, $p\sim q$, if there is a unitary $u\in M_n(A)$ for some suitable $n\in\N$ such that 
	\begin{equation}
		\begin{pmatrix}
			p&0\\
			0&0
		\end{pmatrix}=u\begin{pmatrix}
			q&0\\
			0&0
		\end{pmatrix}u^*.
	\end{equation}
	Then $P(A)/\sim$  forms a semigroup under the operation \eqref{kkk}. The $K_0$ \emph{group} $K_0(A)$ of $A$ is defined as the Grothendieck group of the (direct sum) semigroup of isomorphism classes of finitely generated projective right modules over $A$\cite[Section 3.2]{GVF}\cite{R}, or equivalently,  the Grothendieck group of $P(A)/\sim$. Computing the $K_0$ groups of an algebra is a natural question, in general it's not easy, even for commutative one. 
	
	$K_0$ groups are important for noncommutative geometry, Exel shows that Morita equivalent algebras have isomorphic $K_0$ groups\cite{Ex}, and the famous Baum-Connes conjecture\cite{BC} is related to the $K$-theory groups $K_j(C^*_r(G))$ where $j=0,1$ of the reduced $C^*$-algebra $C^*_r(G)$ of a locally compact, Hausdorff and second countable group $G$. See \cite{HLS,MY,V} for significant development in Baum-Connes conjecture, which also provide examples of calculating $K_0$ groups, and \cite{HM} for the calculation of relative algebraic $K$-theory of a truncated polynomial algebra over a perfect field $k$ of positive characteristic $p$, \cite{HML} for division algebras over local fields, etc.
	
	In\cite{RS}, Rieffel and Schwarz give a equivalent condition for Morita equivalence of two noncommutative tori, and Elliott and Hanfeng Li discuss the smooth case\cite{EH}. Notably, Pimsner and Voiculescu construct a hexagonal exact sequence linking the $K$-groups of an algebra $A$ and of  $A\rtimes_{\a}\Z$, the crossed products of $A$ by $\Z$\cite{PV}. By applying this result and considering the noncommutative tori $\mathbb{T}^n_{\Theta}$ as an iterated crossed product $\C\rtimes_{\a_1}\Z\rtimes_{\a_2}\Z\rtimes\ldots\rtimes_{\a_n}\Z$ (cf. \cite[Proposition 12.8]{GVF}), Elliott shows that $K_0(\mathbb{T}^n_{\Theta})=K_1(\mathbb{T}^n_{\Theta})=\Z^{2^{n-1}}$, which are independent of the choices of $\Theta$.
	
	In this paper, We first compute $K_0(A(\mathbb{R}^{2n}_\Theta))$, and then make an approach to the computation of the smooth cases $K_0(A^\infty(\mathbb{R}^{2n}_\Theta))$. By the definition of the $K_0$ group, we need to characterize the projectors of $A(\mathbb{R}^{2n}_\Theta)$ and $A^\infty(\mathbb{R}^{2n}_\Theta)$ under the unitary equivalence respectively. Our first main result is
	\begin{theorem}\label{m1}
		$K_0(A(\mathbb{R}^{2n}_\Theta))=\Z$ for any $n\in\N^*$ and $\Theta$.
	\end{theorem}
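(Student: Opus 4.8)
The plan is to recognise $A(\R^{2n}_\Theta)$ as a Weyl algebra and then reduce Theorem~\ref{m1} to the structure of its finitely generated projective modules. Pairing the generators as in \eqref{r2n} and rescaling $x_{2k}\mapsto\theta_{2k-1,2k}^{-1}x_{2k}$ for $1\le k\le n$ turns the relations into $[x_{2k-1},x_{2k}]=\pm i$ with all remaining brackets zero, so $A(\R^{2n}_\Theta)$ is isomorphic, as a complex algebra, to the $n$-th Weyl algebra $\mathcal A_n=\C\langle u_1,v_1,\dots,u_n,v_n\rangle\big/\big([u_k,v_l]=\delta_{kl},\ [u_k,u_l]=[v_k,v_l]=0\big)$; since only the scale of the entries $\theta_{2k-1,2k}$ entered and it was absorbed by the rescaling, this isomorphism is insensitive to $\Theta$. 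As $K_0$ depends only on the underlying algebra, it suffices to prove $K_0(\mathcal A_n)\cong\Z$. I would then invoke the standard facts that $\mathcal A_n$ is a simple Noetherian domain of finite global dimension; in particular it has no zero divisors, so the $1\times1$ projectors are only $0$ and $I$, and all of the content is in the matrix case, i.e.\ in understanding $P\big(A(\R^{2n}_\Theta)\big)/\!\sim$ through \eqref{kkk}.

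Next I would set up the two ingredients that pin $K_0(\mathcal A_n)$ down. For a lower bound: $\mathcal A_n$, being a Noetherian domain, is an Ore domain and embeds in its skew field of fractions $\mathcal D_n$, and for a finitely generated projective right module $P$ the integer $\rank P:=\dim_{\mathcal D_n}(P\otimes_{\mathcal A_n}\mathcal D_n)$ is additive in $P$, hence induces a homomorphism $\rank\colon K_0(\mathcal A_n)\to\Z$ with $\rank[\mathcal A_n]=1$; thus $m\mapsto m[\mathcal A_n]$ embeds $\Z$ into $K_0(\mathcal A_n)$. It then remains to prove that this map is onto, i.e.\ that every finitely generated projective right $\mathcal A_n$-module is stably free — equivalently, via \eqref{kkk}, that every projector over $A(\R^{2n}_\Theta)$, after addition of a suitable trivial projector, becomes equivalent to a standard diagonal projector $\mathrm{diag}(I,\dots,I,0,\dots,0)$, so that $P\big(A(\R^{2n}_\Theta)\big)/\!\sim\,\cong\N$ and $K_0\cong\Z$.

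This stable-freeness is the crux, and the main obstacle. The quickest route is to quote the theorem of Stafford that finitely generated projective $\mathcal A_n$-modules are stably free, which settles the matter at once; a more conceptual route passes to $G_0$, using that the total-degree filtration on $\mathcal A_n$ has associated graded the polynomial ring $\C[x_1,\dots,x_{2n}]$, so that the comparison theorem for filtered Noetherian rings, the identification $K_0=G_0$ in finite global dimension, and homotopy invariance over a polynomial ring give $K_0(\mathcal A_n)\cong G_0(\mathcal A_n)\cong G_0\big(\C[x_1,\dots,x_{2n}]\big)\cong K_0(\C)=\Z$. To keep things self-contained — the approach I would actually carry out, in keeping with the rest of the paper — I would instead reduce an arbitrary projector $p\in M_k\big(A(\R^{2n}_\Theta)\big)$ to a standard one directly, by descending induction on the top total degree occurring in $p$: at each stage the top-degree data of $p$ form a matrix over $\C[x_1,\dots,x_{2n}]$ which one trivialises with the Quillen–Suslin theorem, and one lifts the trivialising matrices (taken elementary, hence realising an isomorphism of the associated projective modules) back to $M_k(\mathcal A_n)$ so as to lower the degree. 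The delicate point — where the real labour lies — is that $p^2=p$ does \emph{not} force the top-degree part of $p$ to be idempotent, because the commutators in \eqref{r2n} feed lower-order terms into the square, so one must control, uniformly in $k$ and in the matrix size, the entire cascade of lower-order corrections produced at each reduction step; it is exactly this bookkeeping that makes the theorem nontrivial. Combining such a reduction with an induction on matrix size and with the rank homomorphism above then yields $K_0(A(\R^{2n}_\Theta))=\Z$ for all $n$ and all $\Theta$.
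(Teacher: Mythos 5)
Your route is genuinely different from the paper's, and the first two variants of it are correct modulo the theorems you quote. The paper does not pass through Weyl-algebra module theory at all: it proves directly (Theorem \ref{pnr}) that every projector over $A(\R^{2n}_\Theta)$ is a \emph{constant} matrix, i.e.\ $P(A(\R^{2n}_\Theta))=P(\C)$, so the monoid $P(A)/\!\sim$ is just $\N$ and $K_0=\Z$ falls out immediately. The engine is exactly the self-adjointness you set aside: writing $p=pp^*$ rather than $p=p^2$, the diagonal identity $p_{k,k}=\sum_j p_{k,j}p_{k,j}^*$ forces the principal symbol of the right-hand side (which, by $\sigma(y^nx^m)=x^my^n$, lives in degree $2N$ when the row has top degree $N\geq 1$) to vanish, and the resulting coefficient equations are sums $\sum\n a\n^2=0$, which kill the whole top-degree part at once. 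So the ``cascade of lower-order corrections'' that you correctly identify as the crux of a bare idempotent argument simply never arises: positivity collapses the induction in one step. By contrast, your argument via the rank homomorphism into the skew field of fractions plus Stafford's stable-freeness (or Quillen's filtered--graded comparison $K_0(\mathcal{A}_n)\cong G_0(\C[x_1,\dots,x_{2n}])\cong\Z$) buys generality --- it computes the algebraic $K_0$ without reference to the involution --- at the price of heavy imported machinery; the paper's proof is elementary and two pages long.

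Two caveats on your version. First, the ``self-contained'' reduction you say you would actually carry out (descending induction on top degree, trivialising symbols by Quillen--Suslin) is not carried out, and as you yourself note the top-degree part of an idempotent need not be idempotent; without the $*$-structure this bookkeeping is genuinely hard (indeed $\mathcal{A}_1$ has non-free rank-one projectives, so no naive degree reduction can terminate at a constant idempotent in the $1\times1$ case). You should either drop that variant or import the positivity trick above. Second, the paper's $K_0$ is defined via self-adjoint idempotents up to unitary equivalence; for a $*$-algebra that is not a $C^*$-algebra the identification of this with the Grothendieck group of finitely generated projectives (which is what Stafford/Quillen compute) is not automatic, so if you rely on the module-theoretic definition you are leaning on the paper's unproved assertion that the two pictures agree. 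Here they do agree --- both groups are $\Z$ --- but that is a conclusion, not something you may assume at the outset.
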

	But for the smooth case $A^\infty(\mathbb{R}^{2n}_\Theta)$, even the characterization of the  projective elements of $A^\infty(\mathbb{R}^{2n}_\Theta)$ is a highly nontrivial problem. In this paper we consider the classification of a special class of projectors and the following  conclusion is of great importance.
	\begin{conjecture}\label{cjj}
		Let $\{b_n\}_{n\geq0}$ be a complex number sequence such that for some integer $l\geq1$, $b_{n}b_{n+l}=0$ for $\forall n\in\N$ and define 
		$$a_n:=\sum_{k=0}^n(-1)^{n-k}\binom{n}{k}b_k.$$
		If $\lim_{n\to\infty}a_n=0$, then $b_n=0$ for all $n$.
	\end{conjecture}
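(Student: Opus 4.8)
The plan is to read the hypotheses analytically. Since $a_n\to 0$ the sequence $(a_n)$ is bounded, so $A(x):=\sum_{n\ge 0}a_nx^n$ is holomorphic on $\{|x|<1\}$, and from $|A(\rho e^{i\o})|\le\sum_n|a_n|\rho^n$ together with $a_n\to 0$ one gets $\sup_{|x|=\rho}|A(x)|=o\bigl(1/(1-\rho)\bigr)$ as $\rho\to1^-$, in particular $(1-x)A(x)\to 0$. The hypothesis says that $a$ is the signed binomial transform of $b$, whose inverse is $b_n=\sum_k\binom nk a_k$; on generating functions this is the Euler change of variable
\[
B(y):=\sum_{n\ge0}b_ny^n=\frac1{1-y}\,A\!\Bigl(\frac y{1-y}\Bigr),\qquad A(x)=\frac1{1+x}\,B\!\Bigl(\frac x{1+x}\Bigr).
\]
Because $y\mapsto y/(1-y)$ maps $\{\Re y<\tfrac12\}$ onto $\{|x|<1\}$, the first point shows that $B$ continues holomorphically to the half-plane $\{\Re y<\tfrac12\}$, and transporting the estimate gives $\sup_{|y|=\sigma}|B(y)|=o\bigl(1/(1-2\sigma)\bigr)$ as $\sigma\to\tfrac12^-$. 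So the whole problem becomes: a nonzero power series whose coefficient support $S:=\{n:b_n\neq0\}$ is distance-$l$-free cannot satisfy these two analytic constraints.

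If $S$ is finite with largest element $N$, then since $\binom nk/\binom nN\to0$ for $k<N$ one has $a_n=(-1)^{n-N}b_N\binom nN\bigl(1+o(1)\bigr)$, so $|a_n|\to\infty$ unless $b_N=0$; iterating gives $b\equiv 0$. Hence we may assume $S$ is infinite, so $B$ is a transcendental power series (a non-polynomial rational function has coefficients eventually of the shape $\sum_j\lambda_j^{\,n}p_j(n)$, which cannot vanish on a set meeting every residue class $\bmod\,l$ in a ``no two consecutive'' pattern), holomorphic on $\{\Re y<\tfrac12\}$.

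The crux is then a growth estimate. Put $x=-1+\eta$; as $x$ runs over a short arc of $\{|x|=\rho\}$ near $-1$ one has $\Re\eta\asymp 1-\rho$ and $y=x/(1+x)\sim -1/\eta$ runs out to infinity inside $\{\Re y<\tfrac12\}$, indeed with $\Re y\to-\infty$, while the prefactor $1/(1+x)=1/\eta\asymp 1/(1-\rho)$; hence any growth of $B$ towards $\Re y\to-\infty$ makes $\sup_{|x|=\rho}|A(x)|$ fail to be $o(1/(1-\rho))$, contradicting Step 1. That a transcendental power series holomorphic on a half-plane must grow there is a Phragm\'en--Lindel\"of statement (e.g.\ $B=o(|y|^{1/2})$ on the half-plane forces $B$ constant), the only escape being that $B$ grows \emph{solely} along $\Re y\to-\infty$ with $\Im y$ bounded; this borderline case is where the distance-$l$-free structure itself has to be fed in, through the vanishing correlations $b_nb_{n+l}=0$, equivalently $\oint y^{-l-1}B(y)B(x/y)\,dy\equiv0$. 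The method does go through in the model case $l=1$, $S=2\N$: there $B(y)=C(y^2)$ for a power series $C$, the half-plane condition forces $C$ to be \emph{entire}, and then $M_C(R)/R\to\infty$ (true for every transcendental entire $C$) translates, by the computation above with $x=-1+\eta$, into $(1-\rho)\sup_{|x|=\rho}|A(x)|\to\infty$, contradicting $a_n\to 0$.

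The main obstacle is exactly this Step: a quantitative Phragm\'en--Lindel\"od principle adapted to \emph{lacunary} power series, asserting that distance-$l$-free support forces faster growth on $\{\Re y<\tfrac12\}$ than $a_n\to 0$ permits, is what is missing in general, and this is presumably why the assertion is only a conjecture. One hopes to reduce to $l=1$ via the splitting $B(y)=\sum_{r=0}^{l-1}y^rB_r(y^l)$ with each $B_r$ distance-$1$-free, but the binomial transform does not respect this splitting, so transferring the hypothesis $a_n\to 0$ to the pieces $B_r$ would itself have to be carried out on the analytic side.
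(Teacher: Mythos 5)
This statement is not proved in the paper at all: it is stated explicitly as an open conjecture (``Currently I can't prove it or construct a counterexample''), and the accompanying remark only records partial information --- Elkies' counterexample under the weakened hypothesis $b_n=0$ for $3\mid n$, Eremenko's observation via the Fabry gap theorem that the hypotheses force $b_n=O(\epsilon^n)$ for every $\epsilon>0$, and the fact that geometric decay of $|a_n|$ would suffice to conclude $b_n\equiv 0$. Your proposal likewise does not close the argument, and you say so yourself: the ``quantitative Phragm\'en--Lindel\"of principle adapted to lacunary power series'' that you flag as missing is precisely the open content of the conjecture. So the honest verdict is that this is a programme in the same analytic spirit as Eremenko's remark (Euler transform of generating functions, continuation of $B$ to the half-plane $\Re y<\tfrac12$, and a growth-versus-gap dichotomy), not a proof; your Step 1 bound $(1-\rho)\sup_{|x|=\rho}|A(x)|\to 0$, the Euler transform identities, and the finite-support case are all correct and are worth recording, but they stop well short of the conclusion.

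Beyond the gap you acknowledge, two intermediate steps are also unjustified as written. First, infinite support does not force $B$ to be transcendental: for $l=1$ the sequence $b_n=\tfrac12(c^n+(-c)^n)$ satisfies $b_nb_{n+1}=0$ with infinite support and $B(y)=1/(1-c^2y^2)$ rational, so the rational case must be handled directly (it can be, by locating the poles of $A$, but your parenthetical appeal to Skolem--Mahler--Lech-type structure does not do it). Second, even in the model case $l=1$, $S=2\N$, the passage from ``$C$ is transcendental entire, hence $M_C(R)/R\to\infty$'' to ``$(1-\rho)\sup_{|x|=\rho}|A(x)|\not\to 0$'' is incomplete: the circle $|x|=\rho$ is carried by $x\mapsto x/(1+x)$ to a particular circle in $\{\Re y<\tfrac12\}$, and after squaring you only control $|C|$ on certain arcs, not on full circles $|w|=R$; the maximum modulus over an arc can be far smaller than $M_C(R)$ (think of $e^{-w}$ on the positive real axis), so an additional argument (e.g.\ combining the constraint $b_nb_{n+1}=0$ with a minimum-modulus or indicator-function estimate) is needed even there. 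In short, the proposal is a reasonable sketch of one plausible attack on an open problem, but it is not a proof, and the paper contains none to compare it with.
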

	Currently I can't prove it or construct a counterexample, so I state it as a conjecture.  We have the following theorem, which is proved in section \ref{scp}.
	\begin{theorem}\label{m2}
		If Conjecture \ref{cjj} is true and $$\mathcal{P}:=\sum_{p,q=0}^\infty a_{p,q}x^py^q\in P(A^\infty(\R_\theta^2))$$ such that $a_{p,q}=0$ for all pairs $(p,q)$ satisfying $\n p-q\n>k$ for some $k\in\N$, then $\mathcal{P}=0$ or $1$. 
	\end{theorem}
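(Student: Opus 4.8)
The strategy is to reduce, in two algebraic stages, to exactly the hypothesis of Conjecture~\ref{cjj}, the conjecture being invoked only in the third, analytic stage.

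\emph{Stage 1: reduction to the diagonal subalgebra $A_0$.} The relation $xy-yx=-i\theta$ is homogeneous for the grading $\deg x=1$, $\deg y=-1$, so $A^\infty(\R^2_\theta)$ is the closure of $\bigoplus_{d\in\Z}A_d$, where $A_d$ is the closed span of the monomials $x^py^q$ with $p-q=d$. One has $A_dA_{d'}\subseteq A_{d+d'}$; and since $(x^py^q)^*=y^qx^p$ and normal ordering preserves $p-q$, also $A_d^*=A_d$, while the coordinate projections $\pi_d$ are continuous for the Schwartz topology. The band hypothesis says precisely that $\mathcal P=\sum_{d=-k}^{k}P_d$ with $P_d=\pi_d(\mathcal P)\in A_d$, and $\mathcal P=\mathcal P^*$ forces $P_d=P_d^*$. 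Each $P_d$ is moreover a bounded operator: the self-adjoint element $R:=\tfrac12(xy+yx)=xy+\tfrac{i\theta}{2}$ generates the unitary dilation $U_s=e^{-isR/\theta}$, with $U_sxU_s^*=e^sx$ and $U_syU_s^*=e^{-s}y$, so $U_s\mathcal P U_s^*=\sum_{d=-k}^{k}e^{sd}P_d$, and inverting a $(2k+1)$-point Vandermonde system in $e^{s_0},\dots,e^{s_{2k}}$ expresses each $P_d$ as a finite linear combination of the $U_{s_i}\mathcal P U_{s_i}^*$, whence $\|P_d\|\le C_k\|\mathcal P\|$. If $d_{\max}\ge 1$ is the largest degree occurring, the degree-$2d_{\max}$ part of $\mathcal P^2=\mathcal P$ reads $P_{d_{\max}}^2=0$, and a bounded self-adjoint element with $\|P_{d_{\max}}^*P_{d_{\max}}\|=\|P_{d_{\max}}^2\|=0$ vanishes; likewise at the bottom, so upon iterating $\mathcal P=P_0\in A_0$. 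We may therefore assume $k=0$, i.e.\ $\mathcal P=\sum_{q\ge0}c_qx^qy^q$ with $(c_q)_q\in\mathcal{S}(\N)$.

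\emph{Stage 2: eigenvalues from idempotency.} In the algebraic representation on $\C[t]$ with $x=$ multiplication by $t$ and $y=i\theta\,\partial_t$, the monomial $t^n$ is an eigenvector of $\mathcal P$:
\[
\mathcal P\,t^n=\lambda_n\,t^n,\qquad
\lambda_n:=\sum_{q=0}^{n}c_q(i\theta)^q\frac{n!}{(n-q)!}=\sum_{q=0}^{n}\binom{n}{q}\beta_q,\quad \beta_q:=(i\theta)^q\,q!\,c_q .
\]
Thus $\mathrm{ev}_n\colon A_0\to\C$, $\mathcal P\mapsto\lambda_n$, is a continuous algebra homomorphism (the sum is finite and depends only on $c_0,\dots,c_n$), and the family $\{\mathrm{ev}_n\}_n$ is jointly injective by a triangular induction on the $c_q$; hence $\mathcal P\in\{0,1\}$ if and only if $(\lambda_n)_n$ is constant. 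By multiplicativity of $\mathrm{ev}_n$, $\mathcal P^2=\mathcal P$ gives $\lambda_n^2=\lambda_n$, so $\lambda_n\in\{0,1\}$ for every $n$; in particular $\beta_q=\sum_{m=0}^{q}(-1)^{q-m}\binom{q}{m}\lambda_m$ is real. This is exactly the binomial transform in Conjecture~\ref{cjj}, with $(\lambda_n)$ in the role of $(b_n)$ and $(\beta_q)$ in that of $(a_n)$.

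\emph{Stage 3: self-adjointness, and the conjecture.} It remains to verify — after possibly replacing $\mathcal P$ by $1-\mathcal P$, again a self-adjoint idempotent in $A_0$ with $\lambda_n$ replaced by $1-\lambda_n$ — the two hypotheses of Conjecture~\ref{cjj}. Writing out $\mathcal P=\mathcal P^*$ via $y^qx^q=\sum_{j\ge0}\binom{q}{j}^{2}j!(i\theta)^jx^{q-j}y^{q-j}$ and evaluating on $t^n$ turns self-adjointness into the identity $\sum_q\binom{n}{q}\beta_q=\sum_q(-1)^q\binom{n+q}{q}\beta_q$ for all $n\ge0$, that is, extending $\lambda_m:=\sum_q\binom{m}{q}\beta_q$ to $m\in\Z$ by the negative-binomial identity, the reflection symmetry $\lambda_{-1-n}=\lambda_n$. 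Together with $\lambda_n\in\{0,1\}$ this rigidity should force both the decay $\beta_q\to0$ and the eventual constancy of the $0$--$1$ pattern $(\lambda_n)$, whence (taking $l$ larger than the last index at which the pattern equals $1$) the separation $\lambda_n\lambda_{n+l}=0$ required by Conjecture~\ref{cjj}. The conjecture then yields $\lambda_n\equiv0$, i.e.\ $\mathcal P=0$, and so the original $\mathcal P$ equals $0$ or $1$.

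The real obstacle is Stage~3: Stages~1--2 are formal, but deducing from self-adjointness both $\beta_q\to0$ and enough sparsity of $(\lambda_n)$ to secure $b_nb_{n+l}=0$ is precisely the point where one must use that $\mathcal P$ is a projection lying in the \emph{smooth} algebra, and it is plausible that this step is no easier than — and may partially overlap with — Conjecture~\ref{cjj} itself.
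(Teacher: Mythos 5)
Your Stage 1 contains the decisive gap. You decompose $\mathcal P=\sum_{d=-k}^{k}P_d$ by the grading $\deg x=1$, $\deg y=-1$, correctly observe that each $P_d$ is self-adjoint and that the top component satisfies $P_{d_{\max}}^2=0$, and then kill it via $\|P_{d_{\max}}\|^2=\|P_{d_{\max}}^*P_{d_{\max}}\|=\|P_{d_{\max}}^2\|=0$. But $A^\infty(\R^2_\theta)$ is not an algebra of bounded operators: no pair of bounded self-adjoint operators can satisfy $[x,y]=-i\theta I$ with $\theta\neq0$, so the generators, the monomials $x^py^q$, and these formal Schwartz-coefficient series are unbounded, and neither the $C^*$-identity nor the bound $\|P_d\|\le C_k\|\mathcal P\|$ via the (equally unbounded) dilation group is available. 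The tell is that your Stage 1 uses no hypothesis beyond self-adjointness and idempotency, so if it were valid the theorem would hold unconditionally and Conjecture \ref{cjj} would be superfluous; in the paper the conjecture is invoked precisely to carry out this step. Concretely, the paper extracts from $\pi_{\pm 2k}(\mathcal P^2)=0$ (your ``$P_{\pm k}^2=0$'') the relation $b_nb_{n+k}=0$ for $b_n=\sum_q\binom{n}{q}q!\,a_{q,q+k}$, extracts the decay $q!\,a_{q,q+k}\to0$ from the coefficientwise form of $\mathcal P=\mathcal P^*$, and only then applies Conjecture \ref{cjj} to conclude that the outermost band vanishes, descending from $P_k$ to $P_{k-1}$ by induction.

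Your Stages 2--3 then misplace where the conjecture is needed. Stage 2 (the eigenvalues $\lambda_n\in\{0,1\}$ on $t^n$) is exactly the paper's identity $\sum_p\binom{m}{p}\mathfrak b_p\in\{0,1\}$ in Proposition \ref{p0}, and is fine. But the paper finishes the diagonal case \emph{without} the conjecture: $\{0,1\}$-valuedness forces $\mathfrak b_m=m!\,a_{m,m}\in\Z$, and convergence of the self-adjointness relation $\mathfrak b_m=(-1)^m\sum_h(-1)^h\binom{m+h}{m}\bar{\mathfrak b}_{m+h}$ with integer terms and $\binom{m+h}{m}\to\infty$ forces all but finitely many $\mathfrak b_m$ to vanish, reducing to the polynomial case settled by Theorem \ref{pnr}. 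You instead try to feed $(\lambda_n)$ into Conjecture \ref{cjj}, but its hypothesis $\lambda_n\lambda_{n+l}=0$ has no reason to hold on the diagonal (it fails already for $\mathcal P=1$, and ``taking $l$ larger than the last index where the pattern equals $1$'' presupposes eventual constancy, which is what is to be proved); you acknowledge this yourself. As written, the argument neither establishes the off-diagonal reduction nor closes the diagonal case.
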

	\begin{remark}
		Conjecture \ref{cjj} does not hold if we weaken the constraint $b_nb_{n+l}=0$. The following counterexample is given by Noam D. Elkies: Let
		\begin{equation}
			b_n=\frac{\mathrm{sgn}(\sin\frac{n\pi}{3})}{2^n},
		\end{equation}
		then $b_n=0$ when $3|n$, and correspondingly
		\begin{equation}
			a_n\ll \left(\frac{3}{4}\right)^\frac{n}{2}.
		\end{equation}
		In fact, by applying Noam D. Elkies' method, for any $q\geq3$, we can construct a sequence $\{b_n\}_{n\geq0}$ such that $\lim_{n\to\infty}a_n=0$ and $b_n=0$ wherever $q\n n$. 
		
		By applying the Fabry Gap Theorem \cite{B}, Alexandre Eremenko shows that $b_n=O(\epsilon^n)$ for $\forall\epsilon>0$. And if $\n a_n\n$ tends to zero with geometric speed, that is $\n a_n\n=O(\delta^n)$ for some $\d\in(0,1)$, we can conclude that $b_n=0$ for all $n$. See  \href{https://mathoverflow.net/questions/425948/a-number-sequence-problem-involving-binomial-transform}{https://mathoverflow.net/questions/425948/a-number-sequence-problem-involving-binomial-transform} for details.
	\end{remark}
	We also consider the higher-dimensional analogues $P_k(A^\infty(\R_\Theta^{2n}))$ in section \ref{scp}, see Definition \ref{pkn}.  We first consider $P_0(A^\infty(\R_\Theta^{4}))$, then make some primary discussion to $P_k(A^\infty(\R_\Theta^{4}))$ and left the $n\geq3$ cases for future. Naturally we conjecture that:
	\begin{conjecture}\label{conjpk}
		$P_k(A^\infty(\R_\Theta^{2n}))=\{0,1\}$ for $\forall k\geq0$, $\forall n\geq1$ and $\Theta$.
	\end{conjecture}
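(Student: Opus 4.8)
\textit{Proof strategy.} The plan is to read everything off a faithful representation of $A^\infty(\R_\theta^2)$ on the space $V:=\C[t]$ of polynomials, in which $x$ acts as multiplication by $t$ and $y$ as $i\theta\,d/dt$. One checks $[x,y]=-i\theta$ on $V$ and that $x^py^q\cdot t^r=(i\theta)^q\,r(r-1)\cdots(r-q+1)\,t^{\,r+p-q}$; feeding $t^0,t^1,t^2,\dots$ successively into a series that acts as $0$ shows this representation is injective even on all of $A^\infty(\R_\theta^2)$. Grade the algebra by $\deg x=1,\ \deg y=-1$ (the relation $[x,y]=-i\theta$ is homogeneous of degree $0$); then the hypothesis $a_{p,q}=0$ for $\n p-q\n>k$ is exactly the statement $\mathcal{P}=\sum_{m=-k}^{k}\mathcal{P}_m$ with $\mathcal{P}_m$ homogeneous of degree $m$. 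Since $(x^py^q)^{*}=y^qx^p$ re-expands into terms of degree $p-q$, the grading is $*$-invariant, so each $\mathcal{P}_m$ is self-adjoint. On $V$ the piece $\mathcal{P}_m$ sends $t^r\mapsto\mu^{(m)}_r\,t^{\,r+m}$ with $\mu^{(m)}_r=\sum_q (i\theta)^q\,r(r-1)\cdots(r-q+1)\,a_{q+m,q}$ a finite sum; setting $\widetilde a^{(m)}_q:=(i\theta)^q q!\,a_{q+m,q}$ this is a binomial transform, $\mu^{(m)}_r=\sum_q\binom rq\widetilde a^{(m)}_q$, inverted by $\widetilde a^{(m)}_q=\sum_r(-1)^{q-r}\binom qr\mu^{(m)}_r$.

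Next I would strip off the outermost bands by induction on $k$. For $k\ge1$ the degree-$2k$ part of $\mathcal{P}^2=\mathcal{P}$ is $\mathcal{P}_k^2=\mathcal{P}_{2k}=0$, which on $V$ reads $\mu^{(k)}_r\,\mu^{(k)}_{r+k}=0$ for all $r$; this is precisely the hypothesis of Conjecture \ref{cjj} for the sequence $b_n:=\mu^{(k)}_n$ with $l:=k$, its associated alternating transform being by the above exactly $\widetilde a^{(k)}_n$. Granting that $\widetilde a^{(k)}_q\to 0$ (the crux, see below), Conjecture \ref{cjj} gives $\mu^{(k)}\equiv 0$, i.e.\ $\mathcal{P}_k=0$; the degree-$(-2k)$ part together with the action of $\mathcal{P}_{-k}$ on $V$ gives $\mathcal{P}_{-k}=0$ the same way. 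Thus $\mathcal{P}$ now has bandwidth $k-1$, and we iterate. After $k$ steps $\mathcal{P}=\mathcal{P}_0$ acts diagonally on $V$; since for each fixed $r$ the assignment $\mathcal{P}_0\mapsto\mu^{(0)}_r$ is a finite-sum evaluation, hence an algebra homomorphism, $\mathcal{P}_0^2=\mathcal{P}_0$ forces $\mu^{(0)}_r\in\{0,1\}$ for every $r$. If the set $\{r:\mu^{(0)}_r=1\}$ is finite and nonempty, then $\widetilde a^{(0)}_q=(-1)^q\sum_r(-1)^r\binom qr\mu^{(0)}_r$ is $(-1)^q$ times a nonzero polynomial in $q$ and so is not a null sequence, contradicting $\widetilde a^{(0)}_q\to0$; if that set is cofinite, apply the same to $1-\mathcal{P}_0$. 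The one remaining case, $\{r:\mu^{(0)}_r=1\}$ both infinite and coinfinite, should be excluded by combining $\widetilde a^{(0)}_q\to0$ with the self-adjointness relation $\widetilde a^{(0)}_m=\sum_{q\ge m}(-1)^q\binom qm\widetilde a^{(0)}_q$. Once $\mu^{(0)}$ is constant, $\mathcal{P}_0\in\{0,1\}$ on $V$, and by faithfulness $\mathcal{P}=0$ or $1$ in $A^\infty(\R_\theta^2)$.

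The decisive step, and the one I expect to be the real obstacle, is deriving the decay $\widetilde a^{(m)}_q\to 0$ from the hypothesis that $(a_{p,q})$ is Schwartz. The Schwartz condition by itself only gives $|a_{q+m,q}|=O(q^{-N})$ for every $N$, which is far too weak against $|\widetilde a^{(m)}_q|=\theta^q q!\,|a_{q+m,q}|$; the decay must be produced by the projector identities themselves. On the diagonal this is clean: $\mathcal{P}_0^{*}=\mathcal{P}_0$, expanded in the ordered monomials, is the identity $\widetilde a^{(0)}_m=\sum_{q\ge m}(-1)^q\binom qm\widetilde a^{(0)}_q$, and the mere fact that $\mathcal{P}_0^{*}$ lies in $A^\infty(\R_\theta^2)$ forces this series to converge, hence $\widetilde a^{(0)}_q\to0$. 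For an outer band $m\ne0$ the corresponding self-adjointness identity is only a finite recurrence, so there one must also feed in idempotency: the cross-terms of $\mathcal{P}^2=\mathcal{P}$ in degrees $0,\pm1,\dots,\pm(k-1)$ couple the off-diagonal coefficients $\widetilde a^{(m)}$ to the diagonal ones, and the plan would be to propagate the decay outward along this coupled system. Making that propagation precise, and excluding the infinite/coinfinite diagonal, are exactly the technical points a complete proof of Theorem \ref{m2} must supply; the rest is the bookkeeping sketched above together with Conjecture \ref{cjj}.
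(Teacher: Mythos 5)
There is no proof of Conjecture \ref{conjpk} in the paper to compare against --- the statement is left as a conjecture --- and your proposal does not close that gap: it is written entirely for the two-generator algebra $A^\infty(\R_\theta^2)$, i.e.\ the case $n=1$, which is exactly the part the paper already establishes as Theorem \ref{pk} (conditionally on Conjecture \ref{cjj}). For $n=1$ your sketch is essentially the paper's argument in different clothing: your eigenvalue sequence $\mu^{(k)}_r$ of the outermost band acting on $t^r$ is, up to normalization, the paper's $b_n=\sum_{q}\frac{n!}{(n-q)!}a_{q,q+k}$, the identity $\mu^{(k)}_r\mu^{(k)}_{r+k}=0$ is the paper's $b_nb_{n+k}=0$, and the inverse binomial transform $\widetilde a^{(k)}_q=\sum_r(-1)^{q-r}\binom{q}{r}\mu^{(k)}_r$ is \eqref{mfab}; both arguments then invoke the unproven Conjecture \ref{cjj}. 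The representation on $\C[t]$ is a pleasant way to organize this (though you should check that the action of an infinite Schwartz series composes correctly --- the intermediate monomial degrees are unbounded and the structure constants grow factorially, so associativity of the action is not free), but it does not add anything that would touch $n\geq2$.

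The substantive obstruction to the conjecture lives precisely at $n\geq2$, and your proposal never engages with it. There the band decomposition must control $2n$ indices simultaneously ($\max_{r,s}\vert p_r-p_s\vert\leq k$), and the recurrences acquire $n$-th powers of the multinomial coefficients (see \eqref{p0n}); the paper exhibits an explicit non-integral solution of \eqref{prob2}, which kills the integrality mechanism that closes the diagonal case in Proposition \ref{p0}, and is forced to introduce further unproven statements (Conjecture \ref{mfam}, Conjecture \ref{mfa4k}) even for $n=2$. A single commuting copy of $\C[t]$ does not see this structure. Finally, even within $n=1$ your argument is incomplete by your own account: the decay $\widetilde a^{(m)}_q\to0$ for the off-diagonal bands $m\neq0$ and the exclusion of an infinite--coinfinite set $\{r:\mu^{(0)}_r=1\}$ are both left open. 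The latter you could in fact repair --- $\mu^{(0)}_r\in\{0,1\}$ forces $\widetilde a^{(0)}_q\in\Z$, and integrality plus the convergence of the self-adjointness series forces $\widetilde a^{(0)}_q$ to vanish eventually, which is exactly how Proposition \ref{p0} reduces to Theorem \ref{pnr} --- but as written the proposal proves neither the stated conjecture nor, completely, its $n=1$ special case.
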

	The ultimate goal is the characterization of $P(A^\infty(\R_\Theta^{2n}))$ where $n\in\N^*$,  which will tell us  what $K_0(A^\infty(\R_\Theta^{2n}))$ looks like,  is also left for the future. Mimiking Theorem \ref{m1}, we propose the following conjecture:
	\begin{conjecture}\label{k0nTh}
		$K_0(A^\infty(\mathbb{R}^{2n}_\Theta))=\Z$ for $\forall n\in\N^*$ and $\Theta$.
	\end{conjecture}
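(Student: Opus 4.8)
The plan is to realise the smooth algebra as an extension whose ideal is a copy of the smooth compact operators and to feed this into the six-term exact sequence in $K$-theory, rather than to classify projectors one at a time. The starting observation is that $A^\infty(\R^{2n}_\Theta)$, although unital and containing the unbounded generators $x_1,\dots,x_{2n}$, also contains the Wigner transition elements $f_{\mathbf{m}\mathbf{n}}$ (the Laguerre--Gaussian matrix units attached to the $n$ Heisenberg pairs): each $f_{\mathbf{m}\mathbf{n}}$ is of Gaussian type and hence has rapidly decreasing coefficients in the monomial basis $x_1^{p_1}\cdots x_{2n}^{p_{2n}}$, so it lies in $A^\infty$, and the $f_{\mathbf{m}\mathbf{n}}$ satisfy the matrix-unit relations $f_{\mathbf{m}\mathbf{n}}f_{\mathbf{k}\mathbf{l}}=\delta_{\mathbf{n}\mathbf{k}}f_{\mathbf{m}\mathbf{l}}$. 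The first step is therefore to prove that the closed span $\mathcal{K}^\infty$ of the $f_{\mathbf{m}\mathbf{n}}$ with Schwartz-decaying coefficients — the algebra of smooth compacts on $L^2(\R^n)$ — is a two-sided ideal of $A^\infty$; concretely one checks that multiplying a matrix unit by any monomial produces a finite combination of neighbouring matrix units, and that the monomial-Schwartz decay survives this operation, so that $A^\infty\cdot\mathcal{K}^\infty\cdot A^\infty\subseteq\mathcal{K}^\infty$.

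Granting this, I would write the extension $0\to\mathcal{K}^\infty\to A^\infty\to Q\to0$ with $Q:=A^\infty/\mathcal{K}^\infty$ the ``symbol at infinity'' algebra, and invoke the six-term exact sequence together with $K_0(\mathcal{K}^\infty)=\Z$ and $K_1(\mathcal{K}^\infty)=0$. The conceptual crux, modelled on the Toeplitz algebra, is that the generator $[f_{\mathbf 0\mathbf 0}]$ of $K_0(\mathcal{K}^\infty)$ should map to $0$ in $K_0(A^\infty)$ through a shift isometry witnessing $1\sim 1-f_{\mathbf 0\mathbf 0}$; equivalently the index map $K_1(Q)\to K_0(\mathcal{K}^\infty)=\Z$ should be onto. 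If so, exactness forces the inclusion-induced map $K_0(\mathcal{K}^\infty)\to K_0(A^\infty)$ to vanish and $K_0(A^\infty)\to K_0(Q)$ to be an isomorphism, so the whole computation reduces to identifying $Q$ and showing $K_0(Q)=\Z$. For $n=1$ the symbol algebra should be a noncommutative deformation of functions on the circle at infinity of $\R^2$, consistent with the Toeplitz answer $\Z$; for general $n$ one iterates over the $n$ Heisenberg pairs, which is coherent with the already-established algebraic value $K_0(A(\R^{2n}_\Theta))=\Z$ of Theorem \ref{m1}.

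The hard part is exactly the quotient $Q$. Two difficulties stack up. First, making the extension rigorous in the monomial-Schwartz Fréchet topology: one must prove $\mathcal{K}^\infty$ is closed, that $Q$ is again a well-behaved topological $*$-algebra, and that the $K$-theory of these non-$C^*$ Fréchet algebras behaves well enough for the six-term sequence to apply, which typically requires exhibiting a local-Banach or holomorphically stable structure on $A^\infty$. Second, computing $K_*(Q)$ and the index map is itself an index-theoretic problem: $Q$ retains the nontrivial relation $[a,a^\dagger]=1$ modulo smoothing, so it is not commutative and its $K$-theory is not immediate. The author's direct route through Conjecture \ref{conjpk} is the alternative, but it faces the complementary obstacle that genuinely nontrivial projectors such as $f_{\mathbf 0\mathbf 0}$ are not band-limited and so lie outside the reach of the $P_k$ analysis; any complete projector classification must in the end reproduce the same index/Toeplitz mechanism that collapses $[f_{\mathbf 0\mathbf 0}]$ and $[1]$ into a single copy of $\Z$.
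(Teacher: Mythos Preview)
The statement is left in the paper as an open conjecture with no proof; the paper's only strategy is the direct classification of projectors via the band-limited classes $P_k$ (Conjectures~\ref{cjj} and~\ref{conjpk}), which it does not complete. Your extension-theoretic route is therefore not a variant of the paper's argument but a genuinely different and more structural attack. If it can be made rigorous it bypasses the sequence combinatorics entirely; conversely, the paper's $P_k$ analysis, even granting Conjecture~\ref{conjpk}, cannot by itself settle the question, precisely because---as you observe---the Gaussian projector $f_{\mathbf 0\mathbf 0}$ lies in $A^\infty$ but in no $P_k$.

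You correctly flag two structural gaps (a workable $K$-theory for this Fr\'echet completion, and the computation of $K_*(Q)$), but a third deserves to be named explicitly, since the truth of the conjecture itself hinges on it. Your Toeplitz mechanism needs a witness in $A^\infty$ for $1\sim 1-f_{\mathbf 0\mathbf 0}$, essentially the unilateral shift $S=\sum_n f_{n+1,n}$, equivalently $a^\dagger(aa^\dagger)^{-1/2}$. The factor $(aa^\dagger)^{-1/2}$ has no convergent expansion in ordered monomials, and there is no evident reason the $x^py^q$-coefficients of $S$ should lie in $\mathcal S(\N^2)$ as the paper's definition of $A^\infty$ demands. If no such isometry (or matrix amplification thereof) exists in $A^\infty$, the index map $K_1(Q)\to\Z$ may fail to be surjective, $[1]$ and $[f_{\mathbf 0\mathbf 0}]$ would be independent classes, and Conjecture~\ref{k0nTh} would be false rather than merely unproved. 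Deciding whether the shift---or any index-one Fredholm element---belongs to this specific monomial-Schwartz completion is the real crux, and neither your sketch nor the paper addresses it.
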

	
	In section \ref{sfa}, we give a characterization of self-adjoint elements of $A^\infty(\R_\theta^2)$. If $\sum_{p,q=0}^\infty a_{p,q}x^py^q\in A^\infty(\R_\theta^2)$ is self-adjoint, then the imaginary parts of the coefficients $a_{p,q}$ can be expressed by their real parts, see Theorem \ref{sa} for details. Projectors are always self-adjoint, so  we state here for future use. See \cite{MY2} for the similar work for the pseudo-unitary group $\bold{U}(p,p)$.
	
	Through the whole paper we set all terms containing $n!$ with $n<0$ to 0. For example, $$\frac{n!}{(n-m+p)!}\binom{m}{p}a^{m-p}x^py^{n-m+p}:=0\text{ if }n-m+p<0.$$
	And unless stated otherwise, $i=\sqrt{-1}$, $j,k,l,m,n,p,q,r,s,t\in\N$, $a_{p_1,p_2,\ldots,p_{2n}}\in\C$, and $a_{p_1,p_2,\ldots,p_{2n}}=0$ if one of ${p_1,p_2,\ldots,p_{2n}}$ is less than zero. $\Theta:=\{\theta_{pq}\}_{1\leq p,q\leq2n}$ is a real skew-symmetric matrix where $-\theta_{qp}=\theta_{pq}>0$ for $2|q$, $p=q-1$ and $\theta_{pq}=0$ for other cases, $1\leq p,q\leq 2n$. Although some symbols ($\mfa,\mfb,\mathcal{P}$, etc.) are repeatly used, the readers won't be confused.
	
	\ni\textbf{Acknowledgement.} 
	This research is partially supported by NSFC grants 12201255.
	
	\section{The nonsmooth case}
	
	Let's start from $A(\mathbb{R}^{2}_\theta)$, which is generated by two self-adjoint operators $x$ and $y$ satisfying
	\begin{equation}\label{re}
		[x,y]=-i\theta
	\end{equation}
	for a real number $\theta\geq0$. For convenience, in the following we denote $a:=i\theta$, so \eqref{re} is equivalent to
	\begin{equation}\label{a}
		yx=xy+a.
	\end{equation}
	The *-operation on an element of $A(\mathbb{R}^2_\theta)$ is  taking its adjoint as an operator in  $B(H)$. Every element $T\in A(\mathbb{R}^2_\theta)$ can be written as 
	\begin{equation}\label{ele}
		T=\sum_{k=0}^n\sum_{p+q=k}a_{p,q}x^py^q
	\end{equation}
	for some integer $n\in\N$ and $a_{p,q}\in\C$ for all $p$ and $q$.  For example, 
	\begin{align*}
		y^2x&=y(xy+a)=(xy+a)y+ay=2ay+xy^2,\\
		y^2x^2&=(2ay+xy^2)x=2a(xy+a)+x(2ay+xy^2)=2a^2+4axy+x^2y^2.
	\end{align*} 
	The smallest $n$ in \eqref{ele} is called the \emph{degree} of $T$, denoted by $\deg(T)$, which means for such $n$, at least one of the complex numbers  $\{a_{0,n},a_{1,n-1},\ldots,a_{n,0}\}$ is nonzero. And we call
	\begin{equation}
		\sigma(T):=\sum_{p+q=n}a_{p,q}x^py^q
	\end{equation}
	the \emph{principle symbol} of $T$ in \eqref{ele}. For example $\sigma(y^2x)=xy^2$ and $\sigma(y^2x^2)=x^2y^2$. The adjoint of $T$ is
	\begin{equation}
		T^*=\sum_{k=0}^n\sum_{p+q=k}\bar{a}_{p,q}y^qx^p,
	\end{equation}
	to transform $T^*$ into the standard form \eqref{ele}, we should represent $y^qx^p$ as a linear combination of $x^my^n$'s where $m,n\in\N$, like the above example $y^2x$ and $y^2x^2$. First, we have
	\begin{lemma}\label{n}
		For $n\in\N$, 
		\begin{equation}\label{n1}
			y^nx=nay^{n-1}+xy^n.
		\end{equation}
	\end{lemma}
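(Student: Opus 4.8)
The plan is a straightforward induction on $n$, using only the defining relation \eqref{a}, namely $yx = xy + a$.

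For the base case $n = 0$ the claim reads $x = 0 \cdot a \cdot y^{-1} + x$, which holds by the stated convention that the $y^{-1}$ term vanishes; the case $n = 1$ is precisely \eqref{a} rewritten as $yx = 1\cdot a\cdot y^0 + xy$. For the inductive step, I would assume \eqref{n1} holds for some $n \ge 1$ and then compute, left-multiplying by $y$ and substituting $yx = xy + a$ in the middle term:
\begin{equation*}
y^{n+1}x = y\bigl(nay^{n-1} + xy^n\bigr) = nay^n + (yx)y^n = nay^n + (xy+a)y^n = (n+1)ay^n + xy^{n+1}.
\end{equation*}
This is exactly \eqref{n1} with $n$ replaced by $n+1$, closing the induction.

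There is no real obstacle here: the only point requiring a word of care is the edge behavior at $n = 0$, which the global convention on terms with negative exponents already handles. An alternative, equally short route would be to observe that $\operatorname{ad}_y := [y, \cdot\,]$ acts on $x$ by $\operatorname{ad}_y(x) = -a$ (a central scalar) and $\operatorname{ad}_y(y^k) = 0$, so $y^n x = \sum_{j} \binom{n}{j} (\operatorname{ad}_y)^j(x)\, y^{n-j}$ truncates after the $j = 1$ term, yielding $y^n x = x y^n + n(-a)\,\text{(sign)}\,y^{n-1}$; one then just checks the sign convention matches \eqref{a}. I would present the direct induction, as it is cleanest and needs no auxiliary operator-calculus lemma.
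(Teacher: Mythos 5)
Your proposal is correct and is essentially the same induction the paper uses; the only cosmetic difference is that you expand $y^{n+1}x$ as $y\cdot(y^nx)$ and apply the inductive hypothesis first, whereas the paper writes $y^{n+1}x=y^n(xy+a)$ and substitutes the hypothesis afterward. Both computations close the induction identically, so there is nothing further to add.
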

	\begin{proof}
		The case $n=1$  follows directly from the definition.  If \eqref{n1} holds for some $n\in\N$, then \begin{align*}
			y^{n+1}x=y^n(xy+a)=ay^n+(nay^{n-1}+xy^n)y=(n+1)ay^n+xy^{n+1}
		\end{align*}
		and the conclusion follows by induction on $n$.
	\end{proof}

	For slightly more complex situations, when $n\geq 3$, Lemma \ref{n} immediately implies
	\begin{align*}
		y^nx^2&=(nay^{n-1}+xy^n)x\\
		&=na((n-1)ay^{n-2}+xy^{n-1})+x(nay^{n-1}+xy^n)\\
		&=n(n-1)a^2y^{n-2}+2naxy^{n-1}+x^2y^n,\\
		y^nx^3&=(n(n-1)a^2y^{n-2}+2naxy^{n-1}+x^2y^n)x\\
		&=n(n-1)a^2((n-2)ay^{n-3}+xy^{n-2})+2nax((n-1)ay^{n-2}+xy^{n-1})\\
		&\quad+x^2(nay^{n-1}+xy^n)\\
		&=n(n-1)(n-2)a^3y^{n-3}+3n(n-1)a^2xy^{n-2}+3nax^2y^{n-1}+x^3y^n,
	\end{align*}
	if $n<3$, say $n=1$, then 
	$$yx^3=(xy+a)x^2=ax^2+x(xy+a)x=2ax^2+x^2(xy+a)=3ax^2+x^3y,$$
	the second formula for $y^nx^3$ still holds. In fact, we have
	\begin{theorem}\label{xy}
		For all $m,n\in\N$,
		\begin{equation}\label{mn}
			y^nx^m=\sum_{p=0}^{m}\frac{n!}{(n-m+p)!}\binom{m}{p}a^{m-p}x^py^{n-m+p}.
		\end{equation}
		And hence $\sigma(y^nx^m)=x^my^n$.
	\end{theorem}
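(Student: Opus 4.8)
The plan is to prove \eqref{mn} by induction on $m$, keeping $n$ arbitrary, with Lemma \ref{n} serving as the engine of the inductive step. The base cases are immediate: $m=0$ is the tautology $y^nx^0=y^n$ (the right-hand side collapses to its $p=0$ term), and $m=1$ is precisely Lemma \ref{n}. For the inductive step I would assume \eqref{mn} for a given $m$ and all $n$, multiply both sides on the right by $x$, and push the single trailing $x$ past each factor $y^{\,n-m+p}$ using Lemma \ref{n} (applied with its ``$n$'' equal to $n-m+p$). This expresses $y^nx^{m+1}$ as a sum of two pieces: one arising from the ``commutator'' contributions $(n-m+p)\,a\,y^{\,n-m+p-1}$ and one from the ``leading'' contributions $x\,y^{\,n-m+p}$.

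Next I would reindex the leading piece by the shift $p\mapsto p-1$ so that both pieces are expressed in the common basis of monomials $x^py^{\,n-m-1+p}$, and then add coefficients. Two elementary simplifications finish the job: the factorials recombine via $\frac{n!}{(n-m+p)!}\cdot(n-m+p)=\frac{n!}{(n-m-1+p)!}$, and the binomial factors assemble into $\binom{m}{p}+\binom{m}{p-1}=\binom{m+1}{p}$ by Pascal's rule; tracking the powers of $a$ (each commutator step contributes one extra factor of $a$) then reproduces \eqref{mn} verbatim with $m$ replaced by $m+1$.

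The only point requiring genuine care — the ``main obstacle'', such as it is — is the bookkeeping forced by the convention that any term containing the factorial of a negative integer equals $0$. Lemma \ref{n} is stated only for nonnegative exponents, so before invoking it on $y^{\,n-m+p}x$ one must note that whenever $n-m+p<0$ the corresponding summand is already $0$ on both sides of \eqref{mn}; hence no illegitimate instance of the lemma is ever used, and throughout the computation the summation index may be restricted to $0\le p\le m$ without loss. Once this is checked, the manipulations are purely formal; one should also verify the edge indices $p=0$ and $p=m+1$ separately, where one of the two binomial terms vanishes.

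Finally, the assertion $\sigma(y^nx^m)=x^my^n$ falls out of \eqref{mn} directly: the monomial indexed by $p$ has total degree $p+(n-m+p)=n-m+2p$, which is strictly below $n+m$ for $p<m$ and equals $n+m$ only at $p=m$, where the coefficient reduces to $\frac{n!}{n!}\binom{m}{m}a^{0}=1$; reading off the top-degree part gives the claim.
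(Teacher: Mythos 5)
Your proposal is correct and follows essentially the same route as the paper: induction on $m$ with Lemma \ref{n} as the base case and as the tool for pushing the trailing $x$ past $y^{\,n-m+p}$ in the inductive step, followed by the reindexing $p\mapsto p-1$, the factorial recombination, and Pascal's rule $\binom{m}{p}+\binom{m}{p-1}=\binom{m+1}{p}$. Your explicit attention to the vanishing-negative-factorial convention and to the edge indices is a point the paper passes over silently, but it does not change the argument.
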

	\begin{proof}
		Lemma \ref{n} confirm the $m=1$ case. If \eqref{mn} holds for some $m$, then
		\begin{align*}
			y^nx^{m+1}&=\sum_{p=0}^{m}\frac{n!}{(n-m+p)!}\binom{m}{p}a^{m-p}x^py^{n-m+p}x\\
			&=\sum_{p=0}^{m}\frac{n!}{(n-m+p)!}\binom{m}{p}a^{m-p}x^p((n-m+p)ay^{n-m-1+p}+xy^{n-m+p})\\
			&=\sum_{p=0}^{m}\frac{n!}{(n-m+p-1)!}\binom{m}{p}a^{m-p+1}x^py^{n-m+p-1}+\sum_{p=0}^{m}\frac{n!}{(n-m+p)!}\binom{m}{p}a^{m-p}x^{p+1}y^{n-m+p}\\
			&=\sum_{p=1}^{m}\frac{n!}{(n-m+p-1)!}\left(\binom{m}{p}+\binom{m}{p-1}\right)a^{m-p+1}x^py^{n-m+p-1}+\frac{n!}{(n-m-1)!}a^{m+1}y^{n-m-1}\\
			&\quad+x^{m+1}y^n\\
			&=\sum_{p=1}^{m}\frac{n!}{(n-m+p-1)!}\binom{m+1}{p}a^{m-p+1}x^py^{n-m+p-1}+\frac{n!}{(n-m-1)!}a^{m+1}y^{n-m-1}+x^{m+1}y^n\\
			&=\sum_{p=0}^{m+1}\frac{n!}{(n-m-1+p)!}\binom{m+1}{p}a^{m+1-p}x^py^{n-m-1+p},
		\end{align*}
		i.e., \eqref{mn} also holds for $m+1$, so according to mathematical induction \eqref{mn} holds for all $m,n\in\N$.
	\end{proof} 
	For any projectors $p\in P(A(\mathbb{R}^2_\theta))$, we have $p^2=p=p^*$, then by applying Theorem \ref{xy}, compare the principle symbol of entries of $p^2$ and $p$, we can show that in fact $p\in P(\C)$, i.e., we have
	\begin{theorem}\label{pnr}
		If $p\in P(A(\mathbb{R}^2_\theta))$, then $p\in P(\C)$.
	\end{theorem}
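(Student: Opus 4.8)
The plan is to track how the principal symbol interacts with multiplication and with the $*$-operation, and to use this to bound the degrees of the entries of a projector. Theorem \ref{xy} gives the two facts needed. First, for nonzero $T,S\in A(\R^2_\theta)$ the total degree is additive and the symbol is multiplicative: rearranging $x^{p}y^{q}x^{r}y^{s}$ into the standard form \eqref{ele} via \eqref{mn} yields $x^{p+r}y^{q+s}$ plus a $\C$-combination of monomials of strictly smaller total degree, so $\deg(TS)=\deg T+\deg S$ and $\sigma(TS)=\sigma(T)\,\sigma(S)$, the product now being taken in the commutative polynomial ring $\C[x,y]$ (there is no cancellation of the leading term because $\C[x,y]$ is an integral domain). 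Second, since $x$ and $y$ are self-adjoint, $(x^{p}y^{q})^{*}=y^{q}x^{p}=x^{p}y^{q}+(\text{lower order})$ by \eqref{mn}, hence $\deg T^{*}=\deg T$ and $\sigma(T^{*})=\overline{\sigma(T)}$, where the bar denotes conjugation of the coefficients of the homogeneous polynomial $\sigma(T)$.

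Now suppose $p=(p_{ij})\in P(A(\R^2_\theta))$ is a $k\times k$ projector, and put $N:=\max_{i,j}\deg p_{ij}$. For each $(i,j)$ let $M_{ij}\in\C[x,y]$ denote the homogeneous degree-$N$ component of $p_{ij}$ (thus $M_{ij}=\sigma(p_{ij})$ if $\deg p_{ij}=N$, and $M_{ij}=0$ otherwise), and set $M:=(M_{ij})$, a matrix over $\C[x,y]$. The claim is that $N=0$; assume to the contrary that $N\geq 1$. Comparing the homogeneous degree-$2N$ components on both sides of $p^{2}=p$: the right-hand side contributes nothing, since $\deg p_{ij}\leq N<2N$, while by the multiplicativity of $\sigma$ the degree-$2N$ component of $(p^{2})_{ij}=\sum_{\ell}p_{i\ell}p_{\ell j}$ is $\sum_{\ell}M_{i\ell}M_{\ell j}$, i.e. the $(i,j)$ entry of $M^{2}$ computed in $\C[x,y]$. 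Hence $M^{2}=0$. Comparing degree-$N$ components on both sides of $p=p^{*}$, which reads $p_{ij}=(p_{ji})^{*}$, and using $\sigma(T^{*})=\overline{\sigma(T)}$, we get $M_{ij}=\overline{M_{ji}}$ for all $i,j$.

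Finally, fix any point $(s,t)\in\R^2$ and evaluate the polynomial entries of $M$ there. Because $s,t$ are real, $M_{ij}(s,t)=\overline{M_{ji}(s,t)}$, so the complex matrix $M(s,t)$ is self-adjoint, while $M(s,t)^{2}=(M^{2})(s,t)=0$. A self-adjoint matrix $B$ with $B^{2}=0$ must vanish, since then $\langle Bv,Bv\rangle=\langle B^{2}v,v\rangle=0$ for every vector $v$. Thus $M(s,t)=0$ for every $(s,t)\in\R^2$, so each polynomial $M_{ij}$ vanishes identically and $M=0$. But $N\geq1$ forces some $p_{ij}$ to have degree exactly $N$, and then $M_{ij}=\sigma(p_{ij})\neq0$, a contradiction. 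Therefore $N=0$, every entry of $p$ is a scalar, and the relations $p^{2}=p=p^{*}$ now say precisely that $p\in P(\C)$. The only delicate point is the degree-additivity and multiplicativity of $\sigma$, i.e. that top-degree terms never cancel under the reordering rule of Theorem \ref{xy}; the remainder is bookkeeping together with the elementary fact about self-adjoint nilpotents.
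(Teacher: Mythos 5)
Your proof is correct. The strategy is the same as the paper's at its core --- pass to the principal symbol and use positivity to kill the top-degree part --- but the execution differs in a way worth noting. The paper combines the two projector identities into $p=pp^{*}$, writes out $\sigma\bigl(\sum_{j}p_{k,j}p_{k,j}^{*}\bigr)$ explicitly, and then runs an induction over the coefficients of the top homogeneous component (the coefficient of $y^{2N}$, then of $x^{2}y^{2N-2}$, and so on), at each stage extracting an identity of the form $\sum\n a\n^{2}=0$. You instead keep $p^{2}=p$ and $p=p^{*}$ separate, package the top-degree data into a matrix $M$ over $\C[x,y]$ satisfying $M^{2}=0$ and $M_{ij}=\overline{M_{ji}}$, and then evaluate at real points to reduce everything to the one-line linear-algebra fact that a Hermitian nilpotent matrix vanishes. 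Your route avoids the coefficient-by-coefficient bookkeeping entirely (and sidesteps the somewhat delicate indexing in the paper's induction), at the cost of having to justify the multiplicativity of $\sigma$ and the identity $\sigma(T^{*})=\overline{\sigma(T)}$ up front --- both of which you do correctly via Theorem \ref{xy} and the integral-domain property of $\C[x,y]$. The evaluation-at-real-points step is legitimate since a polynomial vanishing on all of $\R^{2}$ vanishes identically. Both arguments ultimately rest on the same positivity, but yours is the cleaner and more reusable formulation.
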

	\begin{proof}
		Let $$p=\begin{pmatrix}
			p_{1,1}&\ldots&p_{1,n}\\
			\vdots&\ddots&\vdots\\
			p_{n,1}&\ldots&p_{n,n}\\
		\end{pmatrix}\in M_n(A(\mathbb{R}^2_\theta))$$
		be a $n$-dimensional projector of $A(\mathbb{R}^2_\theta)$. Then $p^2=p=p^*$, and
		\begin{align*}
			\begin{pmatrix}
				p_{1,1}&\ldots&p_{1,n}\\
				\vdots&\ddots&\vdots\\
				p_{n,1}&\ldots&p_{n,n}\\
			\end{pmatrix}=p=p^2&=pp^*\\
			&=\begin{pmatrix}
				p_{1,1}&\ldots&p_{1,n}\\
				\vdots&\ddots&\vdots\\
				p_{n,1}&\ldots&p_{n,n}\\
			\end{pmatrix}\cdot\begin{pmatrix}
				p_{1,1}^*&\ldots&p_{n,1}^*\\
				\vdots&\ddots&\vdots\\
				p_{1,n}^*&\ldots&p_{n,n}^*\\
			\end{pmatrix}\\
			&=\begin{pmatrix}
				\sum_{j=1}^np_{1,j}p_{1,j}^*&\ldots&\sum_{j=1}^np_{1,j}p_{n,j}^*\\
				\vdots&\ddots&\vdots\\
				\sum_{j=1}^np_{n,j}p_{1,j}^*&\ldots&\sum_{j=1}^np_{n,j}p_{n,j}^*\\
			\end{pmatrix},
		\end{align*}
		so for $1\leq k\leq n$,
		\begin{equation}\label{pll}
			p_{k,k}=\sum_{j=1}^np_{k,j}p_{k,j}^*.
		\end{equation}
		Let $\deg(p_{k,l})=N_{k,l}$ and
		\begin{equation}
			p_{k,l}=\sum_{j=0}^{N_{k,l}}\sum_{\alpha+\beta=j}a_{\alpha,\beta}^{k,l}x^\alpha y^\beta
		\end{equation}
		where all $a_{\alpha,\beta}^{k,l}\in\C$. Without loss of generality we assume $N_{k,1}\geq N_{k,l}$ for $1\leq l\leq n$. Then by applying Theorem \ref{xy}, 	
		\begin{align*}
			\sigma\left(\sum_{k=1}^np_{k,l}p_{k,l}^*\right)=&\sigma\left(\sum_{k=1}^n\left(\sum_{j=0}^{N_{k,l}}\sum_{\alpha+\beta=j}a_{\alpha,\beta}^{k,l}x^\alpha y^\beta\right)\left(\sum_{j=0}^{N_{k,l}}\sum_{\g+\d=j}\bar{a}_{\alpha,\beta}^{k,l}y^\beta x^\alpha\right)\right)\\
			=&\sigma\left(\sum_{k=1}^n\left(\sum_{\alpha+\beta=N_{k,l}}a_{\alpha,\beta}^{k,l}x^\alpha y^\beta\right)\left(\sum_{\g+\d=N_{k,l}}\bar{a}_{\alpha,\beta}^{k,l}y^\beta x^\alpha\right)\right)\\
			=&\sum_{k=1}^n\sum_{\alpha+\beta=N_{k,l}}\sum_{\g+\d=N_{k,l}}a_{\a,\b}^{k,l}\bar{a}_{\g,\d}^{k,l}x^{\a+\g}y^{\b+\d}\\
			=&\sum_{k=1}^n\sum_{j=0}^{2N_{k,l}}\sum_{\a=0}^{j}a_{\a,N_{k,l}-\a}^{k,l}\bar{a}_{j-\a,N_{k,l}-j+\a}^{k,l}x^{j}y^{2N_{k,l}-j}.
		\end{align*}
		If $N_{k,1}\geq1$ and $\sigma\left(\sum_{k=1}^np_{k,l}p_{k,l}^*\right)\neq 0$,  then
		\begin{equation}
			\sigma\left(\sum_{k=1}^np_{k,l}p_{k,l}^*\right)=2N_{k,1}>N_{k,1}=\sigma(p_{k,k}),
		\end{equation}
		contradicts to \eqref{pll}, so we must have
		\begin{equation}\label{slk}
			\sum_{k=1}^n\sum_{j=0}^{2N_{k,l}}\sum_{\a=0}^{j}a_{\a,N_{k,l}-\a}^{k,l}\bar{a}_{j-\a,N_{k,l}-j+\a}^{k,l}x^{j}y^{2N_{k,l}-j}=\sigma\left(\sum_{k=1}^np_{k,l}p_{k,l}^*\right)=0.
		\end{equation}
		For $1\leq k\leq n$, the coefficient of $y^{2N_{k,l}}$ is
		\begin{equation}
			0=\sum_{N_{k,j}=N_{k,l}}a_{0,N_{k,j}}^{k,j}\bar{a}_{0,N_{k,j}}^{k,j}=\sum_{N_{k,j}=N_{k,l}}\n a_{0,N_{k,j}}^{k,j}\n^2,
		\end{equation}
		so $a_{0,N_{k,l}}^{k,l}=0$ for all $1\leq k\leq n$; Then the coefficient of $x^2y^{2N_{k,l}-2}$ is
		\begin{equation}
			0=\sum_{N_{k,j}=N_{k,l}}\left(a_{0,N_{k,j}}^{k,j}\bar{a}_{2,N_{k,j}-2}^{k,j}+a_{1,N_{k,j}-1}^{k,j}\bar{a}_{1,N_{k,j}-1}^{k,j}+a_{2,N_{k,j}-2}^{k,j}\bar{a}_{0,N_{k,j}}^{k,j}\right)=\sum_{N_{k,j}=N_{k,l}}\n a_{1,N_{k,j}}^{k,j}\n^2,
		\end{equation}	
		so $a_{1,N_{k,l}-1}^{k,l}=0$ for all $1\leq k\leq n$; Next consider the coefficient of $x^4y^{2N_{k,l}-4}$, finally we have $a_{\a,N_{k,l}-\a}^{k,l}=0$ for all $1\leq k\leq n$ and $0\leq\a\leq N_{k,l}$, but this contradicts to the assumption $N_{k,1}\geq1$. So we must have $N_{k,1}=0$ and hence $p_{k,l}\in\C$ for all $1\leq k,l\leq n$, which implies $p\in P(\C)$.
	\end{proof}
	Then by the definition of $K_0$ groups, we have
	\begin{corollary}
		$K_0(A(\R^2_\theta))=K_0(\C)=\Z$.
	\end{corollary}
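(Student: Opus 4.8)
The plan is to upgrade Theorem \ref{pnr} to a computation of $K_0$ by comparing $A:=A(\R^2_\theta)$ with $\C$. By Theorem \ref{pnr} the set $P(A)$ of self-adjoint matrix projectors over $A$ coincides, together with the block-sum operation of \eqref{kkk}, with $P(\C)$; hence the natural additive map $\phi\colon P(\C)/\!\sim_\C\ \longrightarrow\ P(A)/\!\sim_A$ (where $\sim_A$ allows conjugation by unitaries over $A$) is surjective. Since $P(\C)/\!\sim_\C\ \cong\ \N$ with $\C$-rank as a complete invariant, and $K_0(\C)=\Z$, passing to Grothendieck groups shows at once that $K_0(A)$ is a cyclic group, a quotient of $\Z$ generated by the class of $1$; equivalently, the inclusion induces a surjection $\iota_*\colon K_0(\C)\twoheadrightarrow K_0(A)$.

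It remains to prove that $\phi$ (equivalently $\iota_*$) is injective, that is: if $e,f$ are scalar projectors with $e\sim_A f$, then $\operatorname{rank}_\C e=\operatorname{rank}_\C f$. I would first conjugate by unitaries over $\C$ to put them in the normal forms $e=I_r\oplus0$ and $f=I_s\oplus0$ inside a common $M_N(A)$. A short block computation then shows that any unitary $u\in M_N(A)$ with $ueu^*=f$ is forced to be block-diagonal, $u=u_{11}\oplus u_{22}$, with $u_{11}$ an $s\times r$ matrix over $A$ satisfying $u_{11}^*u_{11}=I_r$ and $u_{11}u_{11}^*=I_s$; this exhibits an isomorphism $A^r\cong A^s$ of right $A$-modules. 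So the injectivity of $\phi$ reduces to the invariant basis number property of $A$.

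For $\theta=0$ we have $A=\C[x,y]$, where IBN is classical (e.g.\ from the character $x,y\mapsto0$ onto $\C$). For $\theta\neq0$, $A$ is the first Weyl algebra; here $xy-yx=-i\theta\cdot1$ with $\theta\neq0$ shows the identity lies in the $\C$-span of commutators, which any character would annihilate, so $A$ has no character onto $\C$ and IBN cannot be read off by evaluation. Instead I would use that $A$ is a two-sided Noetherian domain, hence Ore on both sides, and so embeds in its skew field of fractions $D$: tensoring the isomorphism $A^r\cong A^s$ with $D$ over $A$ gives $D^r\cong D^s$ as $D$-modules, whence $r=s$ by a dimension count over the division ring $D$. (Equivalently, the localization functor $D\otimes_A-$ induces $K_0(A)\to K_0(D)=\Z$ which is a left inverse of $\iota_*$.) This yields that $\phi$ is an isomorphism and therefore $K_0(A(\R^2_\theta))\cong K_0(\C)=\Z$. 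The step I expect to need the most care is exactly this IBN claim in the case $\theta\neq0$: because there is no homomorphism $A\to\C$, injectivity of $\iota_*$ is not visible from restriction of scalars and must be extracted from the Noetherian/domain structure (or the explicit Weyl skew field).
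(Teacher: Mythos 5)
Your proposal is correct, and it is more complete than the paper's own argument, which consists of the single sentence ``by the definition of $K_0$ groups'' following Theorem \ref{pnr}. Both start the same way: Theorem \ref{pnr} identifies the set of projectors over $A:=A(\R^2_\theta)$ with $P(\C)$, which immediately gives surjectivity of $K_0(\C)\to K_0(A)$. Where you genuinely depart from (and improve on) the paper is in noticing that this alone does not finish the computation: the equivalence relation on scalar projectors could a priori be coarser over $A$ than over $\C$, since $M_N(A)$ contains many more unitaries than $M_N(\C)$, so one must still rule out $I_r\oplus 0\sim_A I_s\oplus 0$ for $r\neq s$. Your block computation correctly extracts from such a unitary an isomorphism $A^r\cong A^s$ of right $A$-modules, and your reduction to the invariant basis number property is the right move; the observation that for $\theta\neq0$ the identity is a commutator up to scalar, so no character argument is available, is exactly the point at which a naive proof would break. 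The Ore localization into the Weyl skew field $D$ (the algebra is a Noetherian domain, hence Ore on both sides) then gives $D^r\cong D^s$ and $r=s$ by dimension over a division ring. This is all standard and correct for the first Weyl algebra, which is what $A(\R^2_\theta)$ with $\theta>0$ is after rescaling one generator. In short: the paper's proof buys brevity at the cost of silently assuming that $A$-unitary equivalence of scalar projections preserves rank; your proof supplies precisely that missing step, at the cost of importing the Noetherian/Ore machinery for the Weyl algebra. The two agree on everything the paper actually writes down.
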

	For $A(\mathbb{R}^{2n}_\Theta)$, $n\geq2$, note that $x_3,x_4,\ldots,x_{2n}$ commute with $x_1$ and $x_2$, so for any $p\in P(A(\mathbb{R}^{2n}_\Theta))$, we temporarily regard $x_3,x_4,\ldots,x_{2n}$ as constants, then by applying the above method, we can also prove that $p\in P(\C)$. Hence we also have
	\begin{corollary}
		If $p\in P(A(\mathbb{R}^{2n}_\Theta))$, then $p\in P(\C)$ and hence $K_0(A(\R^{2n}_\Theta))=K_0(\C)=\Z$ for all $n\in\N$.
	\end{corollary}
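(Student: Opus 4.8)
The plan is to reduce to Theorem~\ref{pnr}. The essential input there was the degree/principal-symbol calculus of Theorem~\ref{xy}, which forced the top-degree part of a projector entry to collapse; I would set up the same calculus for $A(\mathbb{R}^{2n}_\Theta)$ and then rerun the argument of Theorem~\ref{pnr} essentially verbatim, getting $P(A(\mathbb{R}^{2n}_\Theta))\subseteq P(\C)$, whence $K_0(A(\mathbb{R}^{2n}_\Theta))=K_0(\C)=\Z$.

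One cannot literally ``regard $x_3,\dots,x_{2n}$ as constants'', since these generators need not commute with one another; instead I would filter $A(\mathbb{R}^{2n}_\Theta)$ by total degree in \emph{all} of $x_1,\dots,x_{2n}$ (each $x_i$ of degree $1$). Every commutator $[x_p,x_q]=-i\theta_{pq}$ is a scalar, hence of degree $0<2=\deg x_p+\deg x_q$, so the associated graded algebra is the \emph{commutative} polynomial ring $\C[\xi_1,\dots,\xi_{2n}]$. Write $\sigma(T)\in\C[\xi_1,\dots,\xi_{2n}]$ for the leading homogeneous component of $T$. Reordering a monomial into standard form changes it only by terms of strictly smaller total degree, so $\sigma(T^*)=\overline{\sigma(T)}$ (complex conjugation of coefficients) and $\sigma(ST)=\sigma(S)\sigma(T)$ whenever the right side is nonzero; since $\C[\xi_1,\dots,\xi_{2n}]$ is a domain this holds for all $S,T\neq0$, so $\deg(ST)=\deg(S)+\deg(T)$ and $A(\mathbb{R}^{2n}_\Theta)$ has no zero divisors.

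Now let $p=(p_{k,l})\in M_N(A(\mathbb{R}^{2n}_\Theta))$ satisfy $p=p^2=p^*$, so that $p_{k,k}=\sum_j p_{k,j}p_{k,j}^*$ as in \eqref{pll}. Put $D_k:=\max_l\deg(p_{k,l})$ and suppose $D_k\geq1$. The degree-$2D_k$ homogeneous component of $\sum_j p_{k,j}p_{k,j}^*$ is $\sum_{j:\,\deg p_{k,j}=D_k}\sigma(p_{k,j})\overline{\sigma(p_{k,j})}$, whereas that of $p_{k,k}$ vanishes because $\deg p_{k,k}\leq D_k<2D_k$; hence $\sum_{j:\,\deg p_{k,j}=D_k}\sigma(p_{k,j})\overline{\sigma(p_{k,j})}=0$ in $\C[\xi_1,\dots,\xi_{2n}]$. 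Evaluating this polynomial identity at an arbitrary real point $\xi\in\R^{2n}$ gives $\sum_j\n\sigma(p_{k,j})(\xi)\n^2=0$, so each $\sigma(p_{k,j})$ with $\deg p_{k,j}=D_k$ vanishes identically on $\R^{2n}$ and is therefore the zero polynomial — impossible, since the principal symbol of a degree-$D_k$ element is nonzero. (This repackages the iterated cancellation in the proof of Theorem~\ref{pnr} as a single positivity statement; alternatively one could induct on $n$, writing entries with coefficients in the $*$-subalgebra generated by $x_3,\dots,x_{2n}$ and using that $\sum_j c_jc_j^*=0$ forces each $c_j=0$ there.) Thus $D_k=0$ for every $k$, every $p_{k,l}\in\C$, and $p\in P(\C)$. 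Finally, $P(A(\mathbb{R}^{2n}_\Theta))=P(\C)$ as sets; running the same filtration argument on a unitary $u\in M_L(A(\mathbb{R}^{2n}_\Theta))$ — from $uu^*=I_L$ one gets $\sum_j u_{ij}u_{ij}^*=1$, which as above forces $\deg u_{ij}=0$ — shows $u\in M_L(\C)$, so the relation $\sim$ on scalar projectors is the same whether computed in $A(\mathbb{R}^{2n}_\Theta)$ or in $\C$. Hence the semigroups $P(A(\mathbb{R}^{2n}_\Theta))/\!\sim$ and $P(\C)/\!\sim$ coincide and $K_0(A(\mathbb{R}^{2n}_\Theta))=K_0(\C)=\Z$.

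The main obstacle is the second paragraph: producing an honest principal-symbol calculus for $n\geq2$, where the convenient ``constants'' reasoning of the rank-one case is not literally available because $x_3,\dots,x_{2n}$ do not mutually commute. Once it is replaced by the total-degree filtration — whose associated graded is a commutative domain — everything else (the absence of zero divisors, the absence of nonscalar unitaries, and the collapse of projector entries) is bookkeeping of exactly the kind already carried out in Theorems~\ref{xy} and~\ref{pnr}.
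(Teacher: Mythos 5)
Your proposal is correct, and it reaches the conclusion by a genuinely different technical route than the paper. The paper disposes of $n\geq2$ in one sentence: since $x_3,\dots,x_{2n}$ commute with $x_1,x_2$, it ``temporarily regards $x_3,\dots,x_{2n}$ as constants'' and reruns the proof of Theorem~\ref{pnr}, whose symbol is the top bidegree in $(x_1,x_2)$ and whose kill-the-symbol step is the iterated extraction of the coefficients of $y^{2N}, x^2y^{2N-2},\dots$ from \eqref{slk}. Your objection to that phrasing is fair --- the ``constants'' form a noncommutative algebra, so the coefficient-by-coefficient cancellation of Theorem~\ref{pnr} does not transfer verbatim --- although the paper's sketch is repairable, since everything lives in $B(H)$ and $\sum_j c_jc_j^*=0$ forces $c_j=0$ for operator coefficients as well (the positivity argument you mention parenthetically). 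What you do instead is filter by total degree in all $2n$ generators, observe that the associated graded algebra is the commutative polynomial ring $\C[\xi_1,\dots,\xi_{2n}]$, and kill the top symbol of \eqref{pll} by evaluating the resulting sum-of-squares identity at real points. This buys three things: a single argument uniform in $n$ (no reduction to the $n=1$ bookkeeping), a cleaner replacement of the iterated coefficient extraction by one positivity statement, and --- a point the paper leaves entirely implicit in ``by the definition of $K_0$ groups'' --- the verification that unitaries over $A(\mathbb{R}^{2n}_\Theta)$ are themselves scalar, so that the equivalence relation on scalar projectors, and hence the Grothendieck group, really is that of $\C$. Both arguments rest on the same underlying mechanism (degree $2D$ on the right of \eqref{pll} versus degree $D$ on the left), but your packaging is more robust and closes a small gap in the paper's treatment of the $n\geq2$ case.
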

	We have finished the proof of Theorem \ref{m1}.
	
	\section{The smooth case}\label{scp}
	
	For $A^\infty(\mathbb{R}^{2}_\theta)$, the characterization of projectors is much more difficult, even the projective elements of $A^\infty(\mathbb{R}^{2}_\theta)$.  Again, we start from the simplest case, $A^\infty(\mathbb{R}^{2}_\theta)$. Define 
	$$\g:=\frac{\sqrt{2\theta}(1+i)}{2},$$
	then $\g^2=i\theta=a$, $\g=i\bar{\g}$. Any element $T\in A^\infty(\mathbb{R}^{2}_\theta)$ can be represented as
	\begin{equation}\label{inft}
		T=\sum_{p,q=0}^\infty a_{p,q}\g^{-p-q}x^py^q\in A^\infty(\R_\theta^2)
	\end{equation}
	where $\{a_{p,q}\g^{-p-q}\}\in\mathcal{S}(\Z^2)$ (We will see the advantage of the representation \eqref{inft} later). We can no longer use the method in the proof of Theorem \ref{pnr} to study $P(A^\infty(\mathbb{R}^{2}_\theta))$ because we cannot define principle symbol for the infinite sum \eqref{inft}. First we have the following theorem, which provides the conditions that $\{a_{p,q}\}$ must satisfied when \eqref{inft} is a projector.
	
	\begin{theorem}\label{tt*}
		If \begin{equation}
			T=\sum_{p,q=0}^\infty a_{p,q}\g^{-p-q}x^py^q\in A^\infty(\R_\theta^2)
		\end{equation}
		satisfies $T=TT^*$, we have
		\begin{equation}\label{a*}
			\begin{aligned}
				a_{m,n}&=i^{m+n}\sum_{h=0}^\infty (-1)^h\binom{m+h}{m}\frac{(n+h)!}{n!}\bar{a}_{m+h,n+h}\\
				&=\sum_{r=0}^\infty\sum_{h=0}^{r}\sum_{s=0}^{n}\binom{r}{h}\frac{(n+h-s)!}{(n-s)!}a_{m+h-r,n+h-s}{a}_{r,s}.
			\end{aligned}
		\end{equation}
	\end{theorem}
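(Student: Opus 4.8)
The plan is to turn the single hypothesis $T=TT^*$ into two sets of relations on the coefficients $\{a_{p,q}\}$, a linear one and a quadratic one, exploiting the fact that the normalisation $\g^{-p-q}$ in \eqref{inft} is engineered so that all powers of $\g$ cancel when one returns to the standard form $\sum_{m,n}(\cdot)x^my^n$. First I would record that $TT^*$ is self-adjoint for every $T$, so $T=TT^*$ already forces $T=T^*$, and therefore also $T=TT^*=TT=T^2$. Thus the first line of \eqref{a*} should come from equating $T$ with $T^*$, and the second line from equating $T$ with $T^2$, in both cases after expanding the right-hand side back into standard form by Theorem \ref{xy}.

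For the linear identity: write $T^*=\sum_{p,q}\bar a_{p,q}\bar\g^{-p-q}y^qx^p$, rewrite each $y^qx^p$ through \eqref{mn}, and read off the coefficient of $x^my^n$; this amounts to the substitution $p=m+h$, $q=n+h$ and a sum over $h\geq0$, giving $$\sum_{h\geq0}\bar a_{m+h,n+h}\,\bar\g^{-m-n-2h}\,\frac{(n+h)!}{n!}\binom{m+h}{m}a^{h}.$$ Using $a=\g^2$ and $\bar\g^{-1}=i\g^{-1}$ (equivalently $\g=i\bar\g$ and $\g\bar\g=\theta$), the factor $\bar\g^{-m-n-2h}a^{h}$ collapses to $i^{m+n}(-1)^h\g^{-m-n}$, and comparison with the coefficient $a_{m,n}\g^{-m-n}$ of $x^my^n$ in $T$ produces the first line of \eqref{a*}. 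Absolute convergence of the $h$-sum, hence the legitimacy of this rearrangement, follows from $\{a_{p,q}\g^{-p-q}\}\in\mathcal{S}(\Z^2)$.

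For the quadratic identity I would multiply $T$ by itself using $(x^py^q)(x^ry^s)=x^p(y^qx^r)y^s$ and once more \eqref{mn}, obtaining $$T^2=\sum_{p,q,r,s}a_{p,q}a_{r,s}\,\g^{-p-q-r-s}\sum_{k=0}^{r}\frac{q!}{(q-r+k)!}\binom{r}{k}a^{r-k}x^{p+k}y^{q-r+k+s}.$$ Isolating the $x^my^n$-term imposes $p+k=m$ and $q-r+k+s=n$; the resulting $\g$-exponent is $-m-n-2r+2k$, which combines with $a^{r-k}=\g^{2r-2k}$ to leave exactly $\g^{-m-n}$, matching the coefficient $a_{m,n}\g^{-m-n}$ of $T$. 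Finally the change of variable $h=r-k$ (which sends $\binom{r}{k}\mapsto\binom{r}{h}$, $q\mapsto n+h-s$, $p\mapsto m+h-r$), together with the paper's conventions that annihilate $n!$ for $n<0$ and $a_{p,q}$ with a negative index, truncates the summations to $0\leq h\leq r$, $0\leq s\leq n$ and yields the second line of \eqref{a*}.

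The computations are essentially bookkeeping once the framework is in place; the two points that genuinely need care are the opening reduction to $T=T^*$ and $T=T^2$, which is what makes both right-hand sides expressible purely in the $a_{p,q}$ (and not in a mixture of $a_{p,q}$ and $\bar a_{p,q}$), and the tracking of the $\g$- and $a$-powers, where the normalisation in \eqref{inft} has been chosen exactly so that everything cancels. Justifying the reordering of the absolutely convergent multi-indexed series, via the Schwartz decay, is the remaining minor obstacle.
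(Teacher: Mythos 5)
Your proposal is correct and follows essentially the same route as the paper: reduce $T=TT^*$ to the two identities $T=T^*$ and $T=T^2$, expand $y^qx^p$ via Theorem \ref{xy}, and compare coefficients of $x^my^n$, with the $\g$-normalisation cancelling all powers of $\g$ and $a$ exactly as you describe. The index substitutions ($p=m+h$, $q=n+h$ for the linear relation; $h=r-k$ for the quadratic one) match the paper's computation.
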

	\begin{proof}
		If $T=TT^*$, then $T^*=(TT^*)^*=TT^*=T$, so $T\in P(A^\infty(\mathbb{R}^{2}_\theta))$.  And Theorem \ref{xy} implies
		\begin{align*}
			T=T^*=&\sum_{p,q=0}^\infty\bar{a}_{p,q}\bar{\g}^{-p-q}y^qx^p\\
			=&\sum_{p,q=0}^\infty\bar{a}_{p,q}i^{p+q}\g^{-p-q}\sum_{h=0}^{p}\frac{q!}{(q-p+h)!}\binom{p}{h}a^{p-h}x^hy^{q-p+h},\\
			T=T^2=&\sum_{p,q=0}^\infty a_{p,q}\g^{-p-q}x^py^q\sum_{r,s=0}^\infty{a}_{r,s}\g^{-r-s}x^ry^s\\
			=&\sum_{p,q=0}^\infty\sum_{r,s=0}^\infty a_{p,q}{a}_{r,s}\g^{-p-q-r-s}x^py^{q}x^ry^s\\
			=&\sum_{p,q=0}^\infty\sum_{r,s=0}^\infty a_{p,q}{a}_{r,s}\g^{-p-q-r-s}x^p\sum_{j=0}^{r}\frac{q!}{(q-r+j)!}\binom{r}{j}a^{r-j}x^jy^{q-r+j+s}\\
			=&\sum_{p,q=0}^\infty\sum_{r,s=0}^\infty\sum_{j=0}^{r}\g^{r-p-q-s-2j}\frac{q!}{(q-r+j)!}\binom{r}{j}a_{p,q}{a}_{r,s}x^{p+j}y^{q+s-r+j}.
		\end{align*}

		For $m,n\in\N$, compare the coefficients of $x^my^n$ on both sides, in the first part of $TT^*$ the indices $j,p,q,r,s$ should satisfy the following constrains:
		\begin{equation}
			\begin{cases}
				p+j=m\\
				q+s-r+j=n,
			\end{cases}
		\end{equation}
		then
		\begin{equation}
			\begin{cases}
				p=m-j\\
				q=n+r-j-s,
			\end{cases}
		\end{equation}
		so we have
		\begin{align*}
			a_{m,n}\g^{-m-n}&=\sum_{h=0}^\infty i^{m+n+2h}\binom{m+h}{m}\frac{(n+h)!}{n!}\bar{a}_{m+h,n+h}{\g}^{-m-n-2h}a^{h}\\
			&=\sum_{h=0}^\infty i^{m+n+2h}\binom{m+h}{m}\frac{(n+h)!}{n!}\bar{a}_{m+h,n+h}{\g}^{-m-n},\\
			a_{m,n}\g^{-m-n}&=\sum_{r,s=0}^\infty\sum_{j=0}^{r-1}\frac{(n+r-j-s)!}{(n-s)!}\binom{r}{j}a_{m-j,n+r-j-s}{a}_{r,s}\g^{-m-n}\\
			&=\sum_{r=0}^\infty\sum_{j=0}^{r}\sum_{s=0}^{n}\binom{r}{j}\frac{(n+r-j-s)!}{(n-s)!}a_{m-j,n+r-j-s}{a}_{r,s}\g^{-m-n}\\
			&=\sum_{r=0}^\infty\sum_{h=0}^{r}\sum_{s=0}^{n}\binom{r}{h}\frac{(n+h-s)!}{(n-s)!}a_{m+h-r,n+h-s}{a}_{r,s}\g^{-m-n},
		\end{align*}
		and hence
		\begin{align*}
			a_{m,n}&=i^{m+n}\sum_{h=0}^\infty (-1)^h\binom{m+h}{m}\frac{(n+h)!}{n!}\bar{a}_{m+h,n+h}\\
			&=\sum_{r=0}^\infty\sum_{h=0}^{r}\sum_{s=0}^{n}\binom{r}{h}\frac{(n+h-s)!}{(n-s)!}a_{m+h-r,n+h-s}{a}_{r,s}.
		\end{align*}
	\end{proof}

	\subsection{$P_k(A^\infty(\R_\theta^2))$ where $k\geq0$.}
	It's too general to find all series $\{a_{m,n}\}$ satisfying \eqref{a*}. Let's first consider a simple case, $a_{m,n}=0$ where $m\neq n$. We will need the following well-known binomial formula:
	\begin{equation}\label{wkb}
		\sum_{k=0}^m\binom{m}{k}\binom{n}{l-k}=\binom{m+n}{l}.
	\end{equation}
	It can be proved by comparing the coefficient of $z^l$ in the polynomial $(1+z)^m(1+z)^n=(1+z)^{m+n}$. We have
	\begin{proposition}\label{p0}
		If \begin{equation}
			S:=\sum_{p=0}^\infty a_{p,p}\g^{-2p}x^py^p\in P(A^\infty(\R_\theta^2)),
		\end{equation}
		then $S=0$ or $1$.
	\end{proposition}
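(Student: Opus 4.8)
The plan is to notice that the diagonal monomials $x^py^p$ span a \emph{commutative} subalgebra, rewrite $S$ as a power series in one generator of it, deduce idempotency almost for free, and then use self‑adjointness together with the Schwartz decay to pin everything down. Concretely, set $\xi:=\g^{-2}xy$. Since $\g^{2}=a$ and $yx=xy+a$ one has $x\xi=(\xi-1)x$, hence $xf(\xi)=f(\xi-1)x$ for every polynomial $f$, and a one‑line induction on $p$ gives
$$\g^{-2p}\,x^py^p=\xi(\xi-1)\cdots(\xi-p+1)=:(\xi)_{\downarrow p}\qquad(p\ge0).$$
Therefore $S=\sum_{p\ge0}a_{p,p}(\xi)_{\downarrow p}$ lies in the commutative subalgebra generated by $\xi$; the $(\xi)_{\downarrow p}$ are linearly independent, so the coefficients $a_{p,p}$ are intrinsic to $S$.

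Next I would evaluate at integers. Using the falling‑factorial convolution $(\xi)_{\downarrow p}(\xi)_{\downarrow q}=\sum_{k\ge0}\binom pk\binom qk k!\,(\xi)_{\downarrow p+q-k}$ — a Vandermonde‑type identity of the same family as \eqref{wkb} (prove it by comparing coefficients, or by counting pairs of subsets through their intersection) — the product $S^{2}$ is again a series $\sum_m c_m(\xi)_{\downarrow m}$ in which each $c_m$ is a \emph{finite} expression in the $a_{p,p}$ with $p\le m$; so $S^{2}=S$ forces $c_m=a_{m,m}$ for all $m$, which is exactly the diagonal specialisation of the idempotency identity \eqref{a*} of Theorem~\ref{tt*}, reorganised. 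For each integer $N\ge0$ the substitution $\xi\mapsto N$ kills $(\xi)_{\downarrow p}$ for $p>N$, so evaluating $S^{2}=S$ there (all series truncate) gives $\hat a_N^{2}=\hat a_N$, where $\hat a_N:=\sum_{p=0}^{N}\frac{N!}{(N-p)!}\,a_{p,p}$; hence $\hat a_N\in\{0,1\}$ for every $N$. Since $\hat a_N=\sum_{p=0}^N\binom Np\,(p!\,a_{p,p})$, inverting the binomial transform gives $p!\,a_{p,p}=\sum_{j=0}^{p}(-1)^{p-j}\binom pj\hat a_j=(\Delta^{p}\hat a)(0)$, so it remains only to show that the $\{0,1\}$‑valued sequence $(\hat a_N)$ is constant: then $a_{p,p}=0$ for $p\ge1$, $a_{0,0}=\hat a_0\in\{0,1\}$, and $S=a_{0,0}\in\{0,1\}$.

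The last step is where one must bring in $S=S^{*}$ and $S\in A^{\infty}(\R_\theta^{2})$, not just idempotency. Computing $S^{*}$ with Theorem~\ref{xy} — which gives $y^{P}x^{P}=\sum_j\frac{P!}{j!}\binom Pj a^{P-j}x^{j}y^{j}$ — and demanding that $S^{*}$ again have Schwartz coefficients forces $a_{p,p}$ to decay far faster than the bare Schwartz condition requires; in particular $p!\,a_{p,p}=(\Delta^{p}\hat a)(0)\to0$. One is then left with a purely combinatorial claim: \emph{a $\{0,1\}$‑valued sequence all of whose iterated forward differences at $0$ tend to $0$ is constant}. I would argue this in two cases. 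If $(\hat a_N)$ is eventually constant but not constant, then for large $p$ the difference $(\Delta^{p}\hat a)(0)$ equals $(-1)^{p}$ times a nonzero polynomial in $p$, contradicting $(\Delta^{p}\hat a)(0)\to0$. In the genuinely oscillating case, the decay of the differences makes $\sum_{p}(\Delta^{p}\hat a)(0)\,z^{p}$ have radius of convergence $\ge 1$, and the substitution $w=\frac{z}{1+z}$ identifies it with $\frac{1}{1+w}\,g\!\big(\frac{w}{1+w}\big)$ where $g(w):=\sum_N\hat a_Nw^{N}$; one deduces that $g$ continues analytically across part of the unit circle, so by Szegő's dichotomy (a power series with coefficients in a finite set either has the unit circle as a natural boundary or is rational) $g$ is rational, whence $(\hat a_N)$ is eventually periodic, and a short partial‑fraction computation then forces the periodic block to be constant, reducing to the first case.

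The genuine obstacle is all of this third step: turning "$S=S^{*}\in A^{\infty}$'' into a usable decay rate for $p!\,a_{p,p}$, and then the endgame about $\{0,1\}$‑valued sequences with vanishing iterated differences. Steps 1–2, by contrast, are a short calculation once one spots the commutative subalgebra, and the content of \eqref{wkb} is used only there. It is worth remarking that this endgame is a tame cousin of Conjecture~\ref{cjj}: the rigid hypothesis "values in $\{0,1\}$'' here plays the role that the gap condition $b_nb_{n+l}=0$ plays there, and it is precisely that extra rigidity that makes the present statement provable unconditionally.
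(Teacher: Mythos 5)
Your steps 1--2 are correct and, after unwinding, coincide with the paper's computation: the identity $\g^{-2p}x^py^p=\xi(\xi-1)\cdots(\xi-p+1)$ with $\xi=\g^{-2}xy$ is a clean way to see why the combinations $\hat a_N=\sum_{p=0}^N\frac{N!}{(N-p)!}a_{p,p}=\sum_{p=0}^N\binom{N}{p}\mfb_p$ (with $\mfb_p:=p!\,a_{p,p}$) are idempotent scalars and hence lie in $\{0,1\}$; the paper reaches the same conclusion \eqref{b01} by manipulating \eqref{a*} directly with \eqref{wkb}. Your derivation of $p!\,a_{p,p}\to 0$ is stated loosely (``Schwartz coefficients of $S^*$'' is not quite the right invocation), but the fact you need is simply that the constant coefficient of $S^*$ is the series $\sum_{h}(-1)^h h!\,\bar a_{h,h}$, i.e.\ \eqref{bm} with $m=0$, whose convergence forces its terms to tend to zero; that is also the paper's input.

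The genuine gap is in your endgame. Having $\hat a_N\in\{0,1\}$ and $(\Delta^p\hat a)(0)=p!\,a_{p,p}\to 0$, you launch a two-case analysis whose second case (Szeg\H{o}'s dichotomy) is both unnecessary and, as sketched, incomplete: after concluding that $(\hat a_N)$ is eventually periodic you would still have to exclude nonconstant periodic blocks whose nontrivial spectral frequencies $\zeta$ all satisfy $\Re\zeta>\tfrac12$, since for those the induced singularities of $\sum_p(\Delta^p\hat a)(0)z^p$ lie outside the open unit disk and no contradiction with convergence arises; ``a short partial-fraction computation'' does not obviously settle this. The observation you missed is integrality: $(\Delta^p\hat a)(0)=\sum_{j=0}^p(-1)^{p-j}\binom{p}{j}\hat a_j$ is an integer because each $\hat a_j\in\{0,1\}$, so a sequence of integers tending to $0$ vanishes for all large $p$; then $\hat a_N$ is a polynomial in $N$ taking only the values $0$ and $1$, hence constant, giving $a_{p,p}=0$ for $p\ge1$ and $S=a_{0,0}\in\{0,1\}$. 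This is essentially how the paper closes: it deduces $\mfb_m\in\N$ from the recursion \eqref{bm01}, notes that convergence of \eqref{bm} then forces all but finitely many $\mfb_m$ to vanish, so that $S\in A(\R^2_\theta)$ and Theorem \ref{pnr} applies. With that one-line repair your argument is correct and amounts to a mildly more conceptual rewrite of the paper's proof.
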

	\begin{proof}
		Define $b_p:=a_{p,p}$, then from \eqref{a*} we have
		\begin{equation}\label{prob}
			b_m=\sum_{r=0}^m\sum_{h=0}^r\binom{r}{h}\frac{(m+h-r)!}{(m-r)!}b_{m+h-r}b_r.
		\end{equation}
		Let $\mfb_m:=m!b_m$, then
		\begin{equation}\label{promb}
			\mfb_m=\sum_{r=0}^m\sum_{h=0}^r\frac{m!}{h!(r-h)!(m-r)!}\mfb_{m+h-r}\mfb_r
		\end{equation}
		and for $m\in\N$,
		\begin{align*}
			&\sum_{p=0}^m\binom{m}{p}\mfb_p\\
			=&\sum_{p=0}^m\binom{m}{p}\sum_{r=0}^p\sum_{h=0}^r\frac{p!}{h!(r-h)!(p-r)!}\mfb_{p+h-r}\mfb_r\\
			=&\sum_{p=0}^m\sum_{r=0}^p\sum_{h=0}^r\frac{m!}{(m-p)!(p-r)!(r-h)!h!}\mfb_{p+h-r}\mfb_r\\
			=&\sum_{p=0}^m\sum_{q=0}^m\sum_{h=0}^m\frac{m!}{(m+h-p-q)!(p-h)!(q-h)!h!}\mfb_p\mfb_q\\
			=&\sum_{p=0}^m\sum_{q=0}^m\binom{m}{p}\sum_{h=0}^m\binom{m-p}{q-h}\binom{p}{h}\mfb_p\mfb_q\\
			=&\sum_{p=0}^m\sum_{q=0}^m\binom{m}{p}\binom{m}{q}\mfb_p\mfb_q\\
			=&\left(\sum_{p=0}^m\binom{m}{p}\mfb_p\right)^2,
		\end{align*}
		hence
		\begin{equation}\label{b01}
			\sum_{p=0}^m\binom{m}{p}\mfb_p=0\text{ or }1.
		\end{equation}
		When $m\geq1$,
		\begin{equation}\label{bm01}
			\mfb_m=-\sum_{p=0}^{m-1}\binom{m}{p}\mfb_p\text{ or }1-\sum_{p=0}^{m-1}\binom{m}{p}\mfb_p,
		\end{equation}
		for $m=0$, \eqref{b01} implies $\mfb_0=0$ or 1, then conbine with \eqref{bm01} we can see that $\mfb_m\in\N$ for every $m\in\N$. Also, \eqref{a*} implies
		\begin{equation}
			b_{m}=(-1)^m\sum_{h=0}^\infty (-1)^h\binom{m+h}{m}\frac{(m+h)!}{m!}\bar{b}_{m+h},
		\end{equation}
		so
		\begin{equation}\label{bm}
			\mfb_{m}=(-1)^m\sum_{h=0}^\infty (-1)^h\binom{m+h}{m}\bar{\mfb}_{m+h}.
		\end{equation}
		Note that 
		\begin{equation}
			\lim_{h\to\infty}\binom{m+h}{m}=1\text{ or }+\infty,
		\end{equation}
		so the  sum \eqref{bm} should be finite, otherwise it won't convergent. So in fact $S\in P(A(\R_\theta^2))$, and then Theorem \ref{pnr} implies $S=0$ or 1.
	\end{proof}
	On the basis of Proposition \ref{p0} we can consider more general cases. 
	\begin{definition}\label{pk2}
		For $k\in\N$, we define
		$$P_k(A^\infty(\R_\theta^2)):=\{\sum_{p,q=0}^\infty a_{p,q}\g^{-p-q}x^py^q\in P(A^\infty(\R_\theta^2)):a_{p,q}=0\text{ for }\n p-q\n>k.\}.$$
	\end{definition}
	Obviously we have $P_k(A^\infty(\R_\theta^2))\subseteq P_l(A^\infty(\R_\theta^2))$ for $k\leq l$. Proposition \ref{p0} shows that $P_0(A^\infty(\R_\theta^2))=\{0,1\}$. With the help of Conjecture \ref{cjj}, we can prove that 
	\begin{theorem}\label{pk}
		$P_k(A^\infty(\R_\theta^2))=\{0,1\}$ for all $k\in\N$.
	\end{theorem}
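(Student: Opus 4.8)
The plan is to argue by induction on $k$, the base case $k=0$ being Proposition \ref{p0}. So assume $P_{k-1}(A^\infty(\R_\theta^2))=\{0,1\}$, let $\mathcal P=\sum_{p,q\ge0}a_{p,q}\g^{-p-q}x^py^q\in P_k(A^\infty(\R_\theta^2))$ with $k\ge1$, and note that $\mathcal P=\mathcal P^2=\mathcal P^*$, so Theorem \ref{tt*} and the identities \eqref{a*} apply. It suffices to show that the two extreme diagonals of $\mathcal P$ vanish, i.e. $a_{p,p+k}=a_{p+k,p}=0$ for all $p$: then $a_{p,q}=0$ whenever $|p-q|>k-1$, so $\mathcal P\in P_{k-1}(A^\infty(\R_\theta^2))$ and we are done by the inductive hypothesis. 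By the $*$-automorphism $x\mapsto y,\ y\mapsto-x$ of $A^\infty(\R_\theta^2)$, which negates every offset, it is enough to treat the offset-$k$ diagonal; write $c_p:=a_{p,p+k}$. Grouping the monomials of $\mathcal P$ by the offset $q-p$ and using Theorem \ref{xy}, a product of a monomial of offset $d_1$ with one of offset $d_2$ lies in offset $d_1+d_2$; since $\mathcal P$ has no monomial of offset exceeding $k$ in modulus, the offset-$2k$ part of $\mathcal P^2$ comes only from (offset $k$)$\,\times\,$(offset $k$), while the offset-$2k$ part of $\mathcal P$ itself is zero since $2k>k$. Specialising the second identity of \eqref{a*} to $n=m+2k$, $s=r+k$ makes this explicit:
\begin{equation}\label{toprel}
	0=\sum_{r=0}^{m+k}\sum_{h=0}^{r}\binom{r}{h}\frac{(m+k+h-r)!}{(m+k-r)!}\,c_{m+h-r}\,c_r \qquad(m\ge0),
\end{equation}
with the convention $c_j=0$ for $j<0$.

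Next I would convert \eqref{toprel} into a gap relation. Performing the same index substitution as in the proof of Proposition \ref{p0} and applying the Vandermonde identity \eqref{wkb} — where now the presence of the factor $\tfrac{(m+k+h-r)!}{(m+k-r)!}$, absent there, prevents the quadratic system from collapsing to a single perfect square — one arrives at a relation of the form
\[
	b_n\,b_{n+l}=0\qquad(n\in\N)
\]
for some integer $l=l(k)\ge1$ and a sequence $\{b_n\}$ built from $\{c_p\}$ by a triangular linear change of variables, possibly after a factorial normalisation. For $k=1$ one checks directly that \eqref{toprel} is exactly $(c_{m-1}+c_m)(c_m+c_{m+1})=0$, so that $b_n=c_{n-1}+c_n$ and $l=1$ work.

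It then remains to verify the other hypothesis of Conjecture \ref{cjj}: that the inverse binomial transform $a_n=\sum_{j=0}^n(-1)^{n-j}\binom nj b_j$ tends to $0$. For this I would use the self-adjointness part of \eqref{a*}, its first line specialised to $n=m+k$, which in the normalisation $\mathfrak c_p:=p!\,c_p$ reads
\[
	\mathfrak c_m=(-1)^m i^{k}\sum_{h=0}^{\infty}(-1)^h\binom{m+k+h}{h}\,\bar{\mathfrak c}_{m+h}.
\]
Because $\binom{m+k+h}{h}\to\infty$ as $h\to\infty$, absolute convergence of this series already forces $\mathfrak c_m$, hence $c_m$, to decay faster than every power of $m$; combining this decay with the structural identity \eqref{toprel} one extracts $a_n\to0$. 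Conjecture \ref{cjj} then gives $b_n=0$ for all $n$, hence $c_p\equiv0$ (since $c$ is recovered from $\{b_n\}$ by a triangular linear map); the offset-$(-k)$ diagonal vanishes by the same argument, so $\mathcal P\in P_{k-1}(A^\infty(\R_\theta^2))$ and the induction closes.

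The decisive obstacle is the last two steps: for general $k$, identifying the factorial normalisation and the exponent $l$ that bring \eqref{toprel} into the gap form $b_nb_{n+l}=0$, and then establishing that the associated inverse binomial transform vanishes in the limit. This second point is precisely where the hypotheses of Conjecture \ref{cjj} must be checked, and, as the Remark shows, its conclusion genuinely fails once either hypothesis is relaxed, so no route that bypasses Conjecture \ref{cjj} is available.
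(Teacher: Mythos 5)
Your overall strategy coincides with the paper's: induct on $k$, kill the two extreme diagonals $a_{p,p\pm k}$ by means of Conjecture \ref{cjj}, and conclude $\mathcal P\in P_{k-1}(A^\infty(\R_\theta^2))$. However, the step you yourself flag as ``the decisive obstacle'' --- converting the quadratic relations on the offset-$2k$ coefficients into a gap relation $b_nb_{n+l}=0$ --- is precisely the content of the paper's proof, and the one piece of evidence you offer for its feasibility is incorrect. Writing $c_p:=a_{p,p+k}$, the individual relation
\begin{equation*}
0=\sum_{r=0}^{m+k}\sum_{h=0}^{r}\binom{r}{h}\frac{(m+k+h-r)!}{(m+k-r)!}\,c_{m+h-r}\,c_r
\end{equation*}
does \emph{not} factor as $(c_{m-1}+c_m)(c_m+c_{m+1})$ when $k=1$: at $m=2$ its left-hand side equals $2c_0c_2+c_1^2+7c_1c_2+6c_2^2+3c_0c_3+6c_1c_3+6c_2c_3$, which contains the monomials $c_1^2$, $c_0c_2$ and $c_0c_3$, none of which occur in $(c_1+c_2)(c_2+c_3)$; it happens to factor for $m=0,1$, which is presumably what misled you. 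What does factor is the weighted combination $\sum_{p=0}^{n}\frac{n!}{(n-p)!}a_{p,p+2k}$ of these relations: after the index substitution and an application of \eqref{wkb} it collapses to $\bigl(\sum_{q=0}^{n}\frac{n!}{(n-q)!}a_{q,q+k}\bigr)\bigl(\sum_{r=0}^{n+k}\frac{(n+k)!}{(n+k-r)!}a_{r,r+k}\bigr)$. So the correct choice is $b_n=\sum_{q=0}^{n}\frac{n!}{(n-q)!}a_{q,q+k}=\sum_{q=0}^{n}\binom{n}{q}\mfa_q$ with $\mfa_q:=q!\,a_{q,q+k}$, and the gap parameter is $l=k$, not some unidentified $l(k)$.

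With that choice your second difficulty dissolves as well: the inverse binomial transform of $\{b_n\}$ is exactly $\{\mfa_n\}$, so the remaining hypothesis of Conjecture \ref{cjj} is simply $\mfa_n\to0$, and this follows at once from the $m=0$ instance of the self-adjointness identity, $\mfa_0=i^k\sum_{h\geq0}(-1)^{h}\binom{h+k}{k}\bar{\mfa}_{h}$, whose convergence forces $\binom{h+k}{k}\bar{\mfa}_h\to0$; no super-polynomial decay of $c_m$ and no further appeal to the quadratic relation are needed. Your reduction of the $a_{p+k,p}$ diagonal to the $a_{p,p+k}$ diagonal via the $*$-automorphism $x\mapsto y$, $y\mapsto -x$ is a nice touch that the paper leaves implicit, but as written your argument has a genuine hole at its central combinatorial step.
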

	\begin{proof}The case of $k=0$ can be derived from Proposition \ref{p0}.  If $P_{k-1}(A^\infty(\R_\theta^2))=\{0,1\}$ for a positive $k$, let
		$$\mathcal{P}:=\sum_{p,q=0}^\infty a_{p,q}\g^{-p-q}x^py^q\in P_k(A^\infty(\R_\theta^2)),$$
		then Theorem \ref{tt*} and the definition of $P_k(A^\infty(\R_\theta^2))$ imply that 
		\begin{align*}
			0=a_{m,m+2k}=&\sum_{r=0}^\infty\sum_{h=0}^{r}\sum_{s=0}^{m+2k}\binom{r}{h}\frac{(m+2k+h-s)!}{(m+2k-s)!}a_{m+h-r,m+2k+h-s}{a}_{r,s}\\
			=&\sum_{r=0}^{m+k}\sum_{h=0}^{r}\binom{r}{h}\frac{(m-r+k+h)!}{(m-r+k)!}a_{m+h-r,m+h-r+k}{a}_{r,r+k},
		\end{align*}
		hence for $n\in\N$,
		\begin{align*}
			0&=\sum_{p=0}^{n}\frac{n!}{(n-p)!}a_{p,p+2k}\\
			&=\sum_{p=0}^{n}\frac{n!}{(n-p)!}\sum_{r=0}^{p+k}\sum_{h=0}^{r}\binom{r}{h}\frac{(p-r+k+h)!}{(p-r+k)!}a_{p+h-r,p+h-r+k}{a}_{r,r+k}\\
			&=\sum_{q=0}^{n}a_{q,q+k}\sum_{p=q}^{n}\sum_{r=0}^{p+k}\frac{n!}{(n-p)!}\binom{r}{q+r-p}\frac{(q+k)!}{(p+k-r)!}a_{r,r+k}\\
			&=\sum_{q=0}^{n}a_{q,q+k}\sum_{r=0}^{n+k}\left(\sum_{p=q}^{q+r}\frac{n!}{(n-p)!}\binom{r}{q+r-p}\frac{(q+k)!}{(p+k-r)!}\right)a_{r,r+k}\\
			&=\sum_{q=0}^{n}\frac{n!}{(n-q)!}a_{q,q+k}\sum_{r=0}^{n+k}\left(\sum_{p=0}^{r}\frac{(n-q)!}{(n-p-q)!}\binom{r}{r-p}\frac{(q+k)!}{(p+q+k-r)!}\right)a_{r,r+k}\\
			&=\sum_{q=0}^{n}\frac{n!}{(n-q)!}a_{q,q+k}\sum_{r=0}^{n+k}r!\sum_{p=0}^{r}\binom{n-q}{p}\binom{q+k}{r-p}a_{r,r+k}\\
			&=\sum_{q=0}^{n}\frac{n!}{(n-q)!}a_{q,q+k}\sum_{r=0}^{n+k}\frac{(n+k)!}{(n+k-r)!}a_{r,r+k}.\quad\text{Here we apply \eqref{wkb}.}
		\end{align*}
		Let $\mfa_q:=q!a_{q,q+k}$ and
		\begin{equation}
			b_n:=\sum_{q=0}^n\frac{n!}{(n-q)!}a_{q,q+k}=\sum_{q=0}^n\binom{n}{q}\mfa_{q},
		\end{equation}
		then $b_{n}b_{n+k}=0$, and the binomial transform formula tells us that
		\begin{equation}\label{mfab}
			\mfa_{n}=\sum_{q=0}^n(-1)^{n-q}\binom{n}{q}b_q.
		\end{equation}
		Also for $n\in\N$, Theorem \ref{tt*} implies
		\begin{equation}
			a_{n,n+k}=i^{2n+k}\sum_{h=0}^\infty (-1)^h\binom{n+h}{n}\frac{(n+k+h)!}{(n+k)!}\bar{a}_{n+h,n+k+h},
		\end{equation}
		hence
		\begin{equation}
			\mfa_0=a_{0,k}=i^k\sum_{h=0}^{\infty}(-1)^{h}\frac{(h+k)!}{k!}\bar{a}_{h,h+k}=i^k\sum_{h=0}^{\infty}(-1)^{h}\binom{h+k}{k}\bar{\mfa}_{h},
		\end{equation}
		which means 
		\begin{equation}
			\lim_{h\to\infty}\binom{h+k}{k}\bar{\mfa}_{h}=0,
		\end{equation}
		and so we must have $\lim_{h\to\infty}{\mfa}_{h}=0$. Then if Conjecture \ref{cjj} is true, we have $b_n\equiv0$ and \eqref{mfab} implies $\mfa_n\equiv0$, $a_{n,n+k}\equiv0$. Similarly we can also prove that $a_{n+k,n}\equiv0$. Hence in fact $\mathcal{P}\in P_{k-1}(A^\infty(\R_\theta^2))$, so $P_{k}(A^\infty(\R_\theta^2))=\{0,1\}$ and by induction on $k$, $P_{k}(A^\infty(\R_\theta^2))=\{0,1\}$ for any $k\in\N$.
	\end{proof}
	What we have done is just a beginning, we left the study of more general cases, or all solutions of \eqref{a*} for future. If that was finished, we may be able to characterize $P(A^\infty(\R_\theta^2))$, which will tell us what $K_0(A^\infty(\R_\theta^2))$ is.
	
	\subsection{$P_0(A^\infty(\R_\Theta^{2n}))$ where $n\geq2$.}We extend Definition \ref{pk2} to higher-dimensional case.
	\begin{definition}\label{pkn}
		For $k,n\in\N$, we  define
		\begin{align*}
			P_k(A^\infty(\R_\Theta^{2n})):=&\{\sum_{p_1,\ldots,p_{2n}=0}^\infty a_{p_1,\ldots,p_{2n}}\g_1^{-p_1-p_2}\ldots\g_n^{-p_{2n-1}-p_{2n}}x_1^{p_1}\ldots x_{2n}^{p_{2n}}\in P(A^\infty(\R_\Theta^{2n})):a_{p_1,\ldots,p_{2n}}=0\\
			&\text{ where }\max_{1\leq r,s\leq 2n}\n p_r-p_s\n>k.\}
		\end{align*}
		where 
		$$\g_m:=\frac{\sqrt{2\theta_{2m-1,2m}}(1+i)}{2},~m=1,2,\ldots,n.$$
	\end{definition}

	Now, let's consider a slightly more complicated example, $P_0(A^\infty(\R_\Theta^{4}))$. If
	$$\mathcal{P}:=\sum_{p=0}^\infty b_{p}\g_1^{-2p}\g_2^{-2p}x_1^px_2^px_3^px_4^p:=\sum_{p=0}^\infty a_{p,p,p,p}\g_1^{-2p}\g_2^{-2p}x_1^px_2^px_3^px_4^p\in P_0(A^\infty(\R_\Theta^{4})),$$
	from \eqref{prob} we have
	\begin{align*}
		b_m\g_2^{-2m}x_3^mx_4^m=&\sum_{r=0}^m\sum_{h=0}^r\binom{r}{h}\frac{(m+h-r)!}{(m-r)!}b_{m+h-r}\g_2^{-2(m+h-r)}b_r\g_2^{-2r}x_3^{m+h-r}x_4^{m+h-r}x_3^rx_4^r\\
		=&\sum_{r=0}^m\sum_{h=0}^r\binom{r}{h}\frac{(m+h-r)!}{(m-r)!}b_{m+h-r}b_r\g_2^{-2(m+h)}\sum_{j=0}^r\binom{r}{j}\frac{(m+h-r)!}{(m+h-2r+j)!}\\
		&\times\g_2^{2r-2j}x_3^{m+h-r+j}x_4^{m+h-r+j},
	\end{align*}
	then
	\begin{equation}\label{prob2}
		b_m=\sum_{r=0}^m\sum_{h=0}^r\left(\binom{r}{h}\frac{(m+h-r)!}{(m-r)!}\right)^2b_{m+h-r}b_r.
	\end{equation}
	This time we can no long conclude that $\{m!b_m\}_{m\geq 0}$ are all integers, for instance,
	\begin{equation}
		\{b_0,b_1,b_2,b_3,\ldots\}=\{0,1,-\frac{1}{8} \left(7+\sqrt{33}\right),\frac{1}{72} \left(46+9
		\sqrt{33}-\sqrt{2089+360 \sqrt{33}}\right),\ldots\}
	\end{equation}
	is a set of solution of \eqref{prob2}. $\mathcal{P}=\mathcal{P}^*$ implies
	\begin{align*}
		b_{m}\g_2^{-2m}x_3^mx_4^m&=(-1)^m\sum_{h=0}^\infty (-1)^h\binom{m+h}{m}\frac{(m+h)!}{m!}\overline{b_{m+h}\g_2^{-2(m+h)}}x_4^{m+h}x_3^{m+h}\\
		&=\sum_{h=0}^\infty \binom{m+h}{m}\frac{(m+h)!}{m!}\bar{b}_{m+h}\g_2^{-2(m+h)}\sum_{j=0}^{m+h}\binom{m+h}{j}\frac{(m+h)!}{j!}\g_2^{2(m+h-j)}x_3^jx_4^j,
	\end{align*}
	so
	\begin{equation}
		b_m=\sum_{h=0}^\infty\left(\binom{m+h}{m}\frac{(m+h)!}{m!}\right)^2\bar{b}_{m+h}.
	\end{equation}
	Like before, if we define $\mfb_{m,2}:=(m!)^2b_{m}$, then
	\begin{equation}\label{mfbm2}
		\mfb_{m,2}=\sum_{r=0}^m\sum_{h=0}^r\left(\frac{m!}{(m-r)!(r-h)!h!}\right)^2\mfb_{m+h-r,2}\mfb_{r,2}=\sum_{h=0}^\infty\binom{m+h}{m}^2\bar{\mfb}_{m+h,2},~\forall m\in\N.
	\end{equation}
	Similarly, every element of $P_0(A^\infty(\R_\Theta^{2n}))$ corresponds to a sequence $\{\mfb_{m,n}\}_{m\geq0}$ satisfying
	\begin{equation}\label{p0n}
		\mfb_{m,n}=\sum_{r=0}^m\sum_{h=0}^r\left(\frac{m!}{(m-r)!(r-h)!h!}\right)^n\mfb_{m+h-r,n}\mfb_{r,n}=\sum_{h=0}^\infty(-1)^{(m+h)n}\binom{m+h}{m}^n\bar{\mfb}_{m+h,n},~\forall m\in\N.
	\end{equation}
	It's quite possible that $P_0(A^\infty(\R_\Theta^{2n}))=\{0,1\}$ for $\forall n\in\N$, so we propose the following conjecture.
	\begin{conjecture}
		Let $n\in\N^*$ and  if a sequence $\{\mfb_{m,n}\}_{m\geq0}$ satisfies \eqref{p0n}, then
		$$\mfb_{m,n}=\begin{cases}
			0\text{ or }1,&m=0,\\
			0,&m\geq1,
		\end{cases}$$
		which means $P_0(A^\infty(\R_\Theta^{2n}))=\{0,1\}$ for $\forall n\in\N^*$.
	\end{conjecture}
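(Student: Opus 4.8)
The plan is to run an induction on the index of the first nonzero term of $\{\mfb_{m,n}\}$, exploiting the two identities in \eqref{p0n} for complementary purposes: the first (idempotent) identity is local and algebraic and will rigidly propagate the sequence once an initial segment is fixed, while the second (self-adjoint) identity is global and analytic, and it is the convergence of its right-hand side that must ultimately force triviality. First I would record the information that comes for free from convergence: for the series on the right of the second identity of \eqref{p0n} to converge for every $m$ one needs $\binom{m+h}{m}^n\mfb_{m+h,n}\to0$ as $h\to\infty$, and since $\binom{m+h}{m}^n$ grows like $h^{mn}$ this forces $\mfb_{p,n}=o(p^{-mn})$ for every fixed $m$, i.e. $\{\mfb_{p,n}\}$ decays faster than any polynomial. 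I would also normalise to $\mfb_{0,n}=0$: replacing $\{\mfb_{m,n}\}$ by the sequence with $\mfb_{0,n}$ changed to $1-\mfb_{0,n}$ and $\mfb_{m,n}$ changed to $-\mfb_{m,n}$ for $m\geq1$ (the sequence-level image of $T\mapsto 1-T$) again satisfies \eqref{p0n} by a direct check, and this symmetry carries the case $\mfb_{0,n}=1$ to $\mfb_{0,n}=0$. The goal then becomes $\mfb_{m,n}=0$ for all $m$.

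Next I would isolate the leading term. Assuming $\mfb_{0,n}=\cdots=\mfb_{m-1,n}=0$, every summand of the first identity of \eqref{p0n} at level $m$ carries a factor $\mfb_{m+h-r,n}\mfb_{r,n}$ that vanishes unless $r\geq m$ and $m+h-r\geq m$, which forces $r=h=m$ and collapses the identity to $\mfb_{m,n}=\mfb_{m,n}^2$. Hence the first nonzero entry, if any, equals $1$; say it occurs at $m=m_0$. For $j\geq1$ the first identity of \eqref{p0n} at level $m_0+j$, after discarding the vanishing terms, becomes a quadratic equation for $\mfb_{m_0+j,n}$ whose remaining coefficients are polynomials (carrying the $n$-th powers of the multinomial factors) in the already-determined $\mfb_{m_0,n},\ldots,\mfb_{m_0+j-1,n}$. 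Thus the entire tail is pinned down by $\mfb_{m_0,n}=1$ up to a binary choice of root at each level. This is precisely where the mechanism of the $n=1$ proof of Proposition \ref{p0} breaks down: there the idempotent identity, through the clean convolution \eqref{wkb}, forces the binomial transform of $\{\mfb_{m,1}\}$ to be idempotent and hence integer-valued, collapsing the tree of choices, whereas for $n\geq2$ the $n$-th powers destroy this convolution (already $\mfb_{2,2}=(-7\pm\sqrt{33})/2$ is irrational), so no integrality is available and the tree must instead be controlled analytically.

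The crux, and the \emph{main obstacle}, is to show that no tail produced in this way can coexist with the super-polynomial decay extracted above from the second identity of \eqref{p0n}. I would attack this through a growth dichotomy for the quadratic recursion: at level $m_0+j$ the quadratic has the shape $\mfb^2+B_j\mfb+C_j=0$ with $B_j\asymp(m_0+j)^n$ coming from the diagonal self-coupling terms, so its two roots have product $C_j$ and one of them is comparable to $B_j$; the plan is to prove that the ``large'' branch grows at least polynomially while the ``small'' branch, fed back through the convolution in the first identity of \eqref{p0n}, also fails to decay faster than every polynomial, so that in either case the constraint $\mfb_{p,n}=o(p^{-mn})$ for all $m$ is violated. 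The final rigidity step is modelled on the geometric-decay phenomenon recorded in the remark following Conjecture \ref{cjj} (the observations of Elkies and Eremenko), where decay of a binomial transform at a geometric rate already forces the underlying sequence to vanish. Carrying out this estimate uniformly over the binary tree of root choices — equivalently, proving that the second identity of \eqref{p0n} is incompatible with any nonzero leading term — is the hard part and the reason the statement is only conjectured; once it is in place the induction closes, giving $\mfb_{m,n}=0$ for all $m\geq1$ and hence $P_0(A^\infty(\R_\Theta^{2n}))=\{0,1\}$ for every $n$.
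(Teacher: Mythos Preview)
The paper does not prove this statement: it is stated precisely as a conjecture, with no argument offered beyond the derivation of \eqref{p0n} itself and the observation (via the numerical example for $n=2$) that the integrality mechanism underlying Proposition~\ref{p0} collapses when $n\geq2$. So there is no ``paper's own proof'' to compare your proposal against.

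On the substance of your outline: the preliminary reductions are sound. From convergence of the second identity in \eqref{p0n} you correctly extract $\mfb_{p,n}=o(p^{-mn})$ for every fixed $m$, hence super-polynomial decay; the symmetry $T\mapsto 1-T$ does correspond at the sequence level to $\mfb_{0,n}\mapsto 1-\mfb_{0,n}$ and $\mfb_{m,n}\mapsto-\mfb_{m,n}$ for $m\geq1$, and the first identity at $m=0$ already gives $\mfb_{0,n}\in\{0,1\}$, so reducing to $\mfb_{0,n}=0$ is legitimate. Your ``leading term'' computation is also right: under $\mfb_{0,n}=\cdots=\mfb_{m-1,n}=0$ the first identity at level $m$ collapses to $\mfb_{m,n}=\mfb_{m,n}^2$, and for $j\geq1$ one obtains a genuine quadratic in $\mfb_{m_0+j,n}$ with coefficients determined by earlier terms.

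The gap is exactly where you locate it, and it is real. Two remarks. First, a small inaccuracy: the linear coefficient $B_j$ of your quadratic at level $M=m_0+j$ contains the term $2\binom{M}{m_0}^n\mfb_{m_0,n}\sim 2M^{m_0 n}/(m_0!)^n$, so $B_j$ grows like $M^{m_0 n}$, not $M^n$ as you wrote; this only strengthens the ``large branch'' half of your dichotomy. Second, and this is the essential point, your assertion that the ``small branch'' also fails to decay super-polynomially is not argued and is far from clear: the small root is $-C_j/B_j+O(C_j^2/B_j^3)$, and nothing you have written rules out $C_j$ decaying fast enough to make this compatible with $\mfb_{p,n}=o(p^{-K})$ for all $K$. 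The analogy with the Elkies--Eremenko observations after Conjecture~\ref{cjj} is suggestive but structurally different, since here there is no clean binomial-transform identity for $n\geq2$ (indeed that is precisely why Proposition~\ref{p0} does not generalise). So your outline is an honest and well-organised plan of attack, but it does not get past the obstacle the paper itself leaves open.
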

	
	\subsection{$P_k(A^\infty(\R_\Theta^{4}))$ where $k\geq1$.}
	For the $k\geq1$ case, as a warm-up, we first consider $P_1(A^\infty(\R_\Theta^{4}))$. If
	$$\mathcal{P}:=\sum_{p_1,p_2,p_3,p_4=0}^\infty a_{p_1,p_2,p_3,p_4}\g_1^{-p_1-p_2}\g_2^{-p_3-p_4}x_1^{p_1}x_2^{p_2}x_3^{p_3}x_4^{p_4}\in P_1(A^\infty(\R_\Theta^{4})),$$
	rewrite $\mathcal{P}$ to
	\begin{equation}
		\sum_{p_1,p_2=0}^\infty\left(\sum_{p_3,p_4=0}^\infty a_{p_1,p_2,p_3,p_4}\g_2^{-p_3-p_4}x_3^{p_3}x_4^{p_4}\right)\g_1^{-p_1-p_2}x_1^{p_1}x_2^{p_2}
	\end{equation}
	and set
	\begin{equation}
		\a_{p_1,p_2}:=\sum_{p_3,p_4=0}^\infty a_{p_1,p_2,p_3,p_4}\g_2^{-p_3-p_4}x_3^{p_3}x_4^{p_4},~\b_{n}:=\sum_{m=0}^n\frac{n!}{(n-p)!}\a_{p,p+1},
	\end{equation}
	then from the proof of Theorem \ref{pk} we have $\b_{n}\b_{n+1}=0$. Note that
	\begin{equation}
		\a_{m,m+1}=\sum_{p_3,p_4=0}^\infty a_{m,m+1,p_3,p_4}\g_2^{-p_3-p_4}x_3^{p_3}x_4^{p_4}=\sum_{r=0}^1\sum_{s=0}^1 a_{m,m+1,m+r,m+s}\g_2^{-2m-r-s}x_3^{m+r}x_4^{m+s},
	\end{equation}
	so
	\begin{align*}
		0&=\b_{n}\b_{n+1}\\
		&=\sum_{p=0}^n\frac{n!}{(n-p)!}\sum_{r=0}^1\sum_{s=0}^1 a_{p,p+1,p+r,p+s}\g_2^{-2p-r-s}x_3^{p+r}x_4^{p+s}\sum_{q=0}^{n+1}\frac{(n+1)!}{(n+1-q)!}\sum_{r=0}^1\sum_{s=0}^1a_{q,q+1,q+r,q+s}\\
		&\quad\times\g_2^{-2q-r-s}x_3^{q+r}x_4^{q+s}\\
		&=\sum_{p=0}^n\sum_{q=0}^{n+1}\sum_{r=0}^1\sum_{s=0}^1\sum_{u=0}^1\sum_{v=0}^1\frac{n!(n+1)!}{(n-p)!(n+1-q)!} a_{p,p+1,p+r,p+s}a_{q,q+1,q+u,q+v}\g_2^{-2p-2q-r-s-u-v}\\
		&\quad\times x_3^{p+r}x_4^{p+s}x_3^{q+u}x_4^{q+v}\\
		&=\sum_{p=0}^n\sum_{q=0}^{n+1}\sum_{r=0}^1\sum_{s=0}^1\sum_{u=0}^1\sum_{v=0}^1\frac{n!(n+1)!}{(n-p)!(n+1-q)!} a_{p,p+1,p+r,p+s}a_{q,q+1,q+u,q+v}\g_2^{-2p-2q-r-s-u-v}\\
		&\quad\times\sum_{h=0}^{q+u}\binom{q+u}{h}\frac{(p+s)!}{(p+s-q-u+h)!}\g_2^{2q+2u-2h}x_3^{p+r+h}x_4^{p+s-u+v+h},
	\end{align*}
	consider the coefficient of $x_3^mx_4^{m+2}$ where $m\in\N$, we have
	\begin{equation}\label{mm2}
		\sum_{p=0}^m\sum_{q=m-p}^{m+1}\frac{n!(n+1)!}{(n-p)!(n+1-q)!}\binom{q}{m-p}\frac{(p+1)!}{(m+1-q)!}a_{p,p+1,p,p+1}a_{q,q+1,q,q+1}=0.
	\end{equation}
	Let $p=q=m/2$, \eqref{mm2k} implies $a_{p,p+1,p,p+1}^2=0$, so $a_{p,p+1,p,p+1}=0$, $\forall p\in\N$.  Similarity, by considering the coefficients of $x_3^{m+2}x_4^{m}$ we have $a_{p,p+1,p+1,p}=0$ for $\forall p\in\N$, and then
	\begin{align*}
		0=&\b_{n}\b_{n+1}\\
		=&\sum_{p=0}^n\sum_{q=0}^{n+1}\sum_{r=0}^1\sum_{u=0}^1\frac{n!(n+1)!}{(n-p)!(n+1-q)!} a_{p,p+1,p+r,p+r}a_{q,q+1,q+u,q+u}\sum_{h=0}^{q+u}\binom{q+u}{h}\frac{(p+r)!}{(p+r-q-u+h)!}\\
		&\times\g_2^{-2p-2r-2h}x_3^{p+r+h}x_4^{p+r+h}.
	\end{align*}
	Then consider the coefficients of $x_3^mx_4^{m}$, we have
	\begin{equation}\label{mm0111}
		\begin{aligned}
			&\sum_{p=0}^m\sum_{q=0}^{m+k}\frac{n!(n+1)!}{(n-p)!(n+1-q)!}\sum_{r=0}^1\sum_{u=0}^1 \binom{q+u}{m-p-r}\frac{(p+r)!}{(m-q-u)!}\\
			\times&a_{p,p+1,p+r,p+r}a_{q,q+1,q+u,q+u}=0,~\forall n\in\N,
		\end{aligned}
	\end{equation}
	Let $p=q=m/2-1$, then $a_{p,p+1,p+1,p+1}^2=0$, so $a_{p,p+1,p+1,p+1}=0$, $\forall p\in\N$. And next \eqref{mm0111} implies $a_{p,p+1,p,p}=0$ for $\forall p\in\N$.  Similarly we can prove $a_{p+1,p,p,p}=a_{p+1,p,p+1,p}=a_{p+1,p,p,p+1}=a_{p+1,p,p+1,p+1}=0$ for $\forall p\in\N$. Then from \eqref{prob} we have
	\begin{equation}\label{amm}
		\a_{m,m}=\sum_{r=0}^m\sum_{h=0}^r\binom{r}{h}\frac{(m+h-r)!}{(m-r)!}\a_{m+h-r,m+h-r}\a_{r,r}=\sum_{h=0}^\infty\binom{m+h}{m}\frac{(m+h)!}{m!}\a_{m+h,m+h}^*.
	\end{equation}
	Consider the coefficient of $x_3^{2m+2}x_4^{2m+2}$, we have $0=m!a_{m,m,m+1,m+1}^2,$ so $a_{m,m,m+1,m+1}=0$, then consider the coefficients of $x_3^{2m}x_4^{2m+2}$ and $x_3^{2m+2}x_4^{2m}$ we have $a_{m,m,m,m+1}=a_{m,m,m+1,m}=0,$ set $m=0$ we have $a_{0,0,0,0}=a_{0,0,0,0}^2,$ so $a_{0,0,0,0}=0$ or 1. 
	
	For $m\geq1$, consider the coefficient of $x_3^{2m}x_4^{2m}$, we can get $a_{m,m,m,m}=0$, then consider the coefficient of $x_3^{2m-2}x_4^{2m}$ and $x_3^{2m}x_4^{2m-2}$, we get that $a_{m,m,m-1,m}=a_{m,m,m,m-1}=0$. Now \eqref{amm} implies
	\begin{align*}
		&a_{m,m,m-1,m-1}\g_2^{-2m+2}x_3^{m-1}x_4^{m-1}\\
		=&\sum_{r=1}^{m-1}\sum_{h=0}^r\binom{r}{h}\frac{(m+h-r)!}{(m-r)!}a_{m+h-r,m+h-r,m+h-r-1,m+h-r-1} a_{r,r,r-1,r-1}\g_2^{-2m-2h+4}\\
		&\times x_3^{m+h-r-1}x_4^{m+h-r-1}x_3^{r-1}x_4^{r-1}+2a_{0,0,0,0}a_{m,m,m-1,m-1}\g_2^{-2m+2}x_3^{m-1}x_4^{m-1}+\sum_{h=1}^m\binom{m}{h}h!\\
		&\times a_{h,h,h-1,h-1}a_{m,m,m-1,m-1}\g_2^{-2m-2h+4}x_3^{h-1}x_4^{h-1}x_3^{m-1}x_4^{m-1}\\
		=&\sum_{r=1}^{m-1}\sum_{h=0}^r\binom{r}{h}\frac{(m+h-r)!}{(m-r)!}a_{m+h-r,m+h-r,m+h-r-1,m+h-r-1}a_{r,r,r-1,r-1}\g_2^{-2m-2h+4}\sum_{j=0}^{r-1}\binom{r-1}{j}\\
		&\times\frac{(m+h-r-1)!}{(m+h-2r+j)!}\g_2^{2r-2j-2}x_3^{m+h-r+j-1}x_4^{m+h-r+j-1}+2a_{0,0,0,0}a_{m,m,m-1,m-1}\g_2^{-2m+2}x_3^{m-1}x_4^{m-1}\\
		&+\sum_{h=1}^m\binom{m}{h}h!a_{h,h,h-1,h-1}a_{m,m,m-1,m-1}\sum_{j=0}^{m-1}\binom{m-1}{j}\frac{(h-1)!}{(h-m+j)!}\g_2^{-2h-2j+2}x_3^{h+j-1}x_4^{h+j-1},
	\end{align*}
	denote $\mfa_{m,2}:=a_{m,m,m-1,m-1}$ for $m\geq1$ and $\mfa_{0,2}:=a_{0,0,0,0}$, then
	\begin{equation}\label{mfam2}
		\begin{aligned}
			\mfa_{m,2}=&\sum_{r=1}^{m-1}\sum_{h=1}^r\binom{r}{h}\frac{(m+h-r)!}{(m-r)!}\binom{r-1}{h-1}\frac{(m+h-r-1)!}{(m-r)!}\mfa_{m+h-r,2}\mfa_{r,2}+2\mfa_{0,2}\mfa_{m,2}\\
			&+\sum_{h=1}^m\frac{m!(m-1)!}{(m-h)!^2}\mfa_{h,2}\mfa_{m,2},~\forall m\geq1.
		\end{aligned}
	\end{equation}
	For $m\geq1$, \eqref{amm} also implies
	\begin{align*}
		\sum_{r,s=0}^\infty a_{m,m,m-1,m-1}\g_2^{-2m-2}x_3^{m-1}x_4^{m-1}=&i^{m+m}\sum_{h=0}^{\infty}(-1)^h\binom{m+h}{m}\frac{(m+h)!}{m!}\sum_{r,s=0}^\infty\bar{a}_{m+h,m+h,r,s}i^{r+s}\\
		&\times\g_2^{-r-s}x_4^{s}x_3^{r}\\
		=&i^{m+m}\sum_{h=0}^{\infty}(-1)^h\binom{m+h}{m}\frac{(m+h)!}{m!}\sum_{r,s=0}^\infty \bar{a}_{m+h,m+h,r,s}i^{r+s}\\
		&\times\g_2^{-r-s}\sum_{j=0}^{r}\binom{r}{j}\frac{s!}{(s-r+j)!}\g_2^{2r-2j}x_3^jx_4^{s-r+j},
	\end{align*}
	hence
	\begin{equation}\label{mfam20}
		\mfa_{m,2}=\left\{\begin{aligned}
			&-\sum_{h=0}^{\infty}\binom{m+h}{m}\frac{(m+h)!}{m!}\binom{m+h-1}{m-1}\frac{(m+h-1)!}{(m-1)!}\bar{\mfa}_{m+h,2},&m\geq1,\\
			&\sum_{h=0}^\infty h!^2\bar{\mfa}_{h,2},&m=0.
		\end{aligned}
		\right.
	\end{equation}
	A sequence $\{\mfa_{m,2}\}_{m\geq0}$ satisfying \eqref{mfam2} is
	$$0,0,0,\frac{1}{12},-\frac{1}{288}
	\left(23+\sqrt{241}\right),\frac{341+20 \sqrt{241}-\sqrt{60281+4040
			\sqrt{241}}}{5760},\ldots$$
	but currently I don't find a sequence satisfying \eqref{mfam2} and \eqref{mfam20} simultaneously except zero sequence. So we propose the following conjecture.
	\begin{conjecture}\label{mfam}
		There is only zero sequence which satisfies \eqref{mfam2} and \eqref{mfam20} simultaneously.
	\end{conjecture}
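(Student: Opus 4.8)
The plan is to observe that, after the reductions already carried out in this subsection, the projector $\mathcal P\in P_1(A^\infty(\R_\Theta^4))$ has become a \emph{function of the two commuting elements} $u:=x_1x_2$ and $v:=x_3x_4$ (note $[u,v]=0$), and that idempotency of such an element is extremely rigid — rigid enough, I expect, to force $\mathcal P\in\{0,1\}$ directly. This reformulation, rather than wrestling with \eqref{mfam2}--\eqref{mfam20} coefficient by coefficient, is the whole point.

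First I would rewrite $\mathcal P$. A short induction from $x_2x_1=x_1x_2+i\theta_{12}$ (and $x_4x_3=x_3x_4+i\theta_{34}$), in the spirit of Theorem \ref{xy}, gives
\[
x_1^mx_2^m=u(u-i\theta_{12})(u-2i\theta_{12})\cdots\big(u-(m-1)i\theta_{12}\big),\qquad x_3^{m-1}x_4^{m-1}=v(v-i\theta_{34})\cdots\big(v-(m-2)i\theta_{34}\big).
\]
Since $\g_1^2=i\theta_{12}$ and $\g_2^2=i\theta_{34}$, putting $U:=u/(i\theta_{12})$ and $V:=v/(i\theta_{34})$ the surviving part of $\mathcal P$ becomes
\[
\mathcal P=\mfa_{0,2}+\sum_{m\ge1}\mfa_{m,2}\,U(U-1)\cdots(U-m+1)\cdot V(V-1)\cdots(V-m+2).
\]
Next I would diagonalise: in the representation $x_1\mapsto t_1\cdot$, $x_2\mapsto i\theta_{12}\,\partial_{t_1}$, $x_3\mapsto t_2\cdot$, $x_4\mapsto i\theta_{34}\,\partial_{t_2}$ of $A^\infty(\R_\Theta^4)$ on $\C[[t_1,t_2]]$ one has $U=t_1\partial_{t_1}$, $V=t_2\partial_{t_2}$, so the monomials $t_1^{n_1}t_2^{n_2}$ are simultaneous eigenvectors and $\mathcal P$ acts on $t_1^{n_1}t_2^{n_2}$ by the scalar
\[
f(n_1,n_2):=\mfa_{0,2}+\sum_{m=1}^{\min(n_1,\,n_2+1)}\mfa_{m,2}\,\frac{n_1!}{(n_1-m)!}\cdot\frac{n_2!}{(n_2-m+1)!},
\]
a finite sum for every $(n_1,n_2)\in\N^2$. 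The crucial step is then to deduce, from $\mathcal P^2=\mathcal P$, that $f(n_1,n_2)^2=f(n_1,n_2)$, hence $f(n_1,n_2)\in\{0,1\}$, for \emph{all} $(n_1,n_2)\in\N^2$.

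Granting that, the rest is elementary. Fix $n_1\ge2$; for $n_2\ge n_1-1$ the map $n_2\mapsto f(n_1,n_2)$ is a polynomial in $n_2$ of degree $\le n_1-1$ whose coefficient of $n_2^{\,n_1-1}$ equals $n_1!\,\mfa_{n_1,2}$ (it comes only from the $m=n_1$ term, since each $\frac{n_2!}{(n_2-m+1)!}=n_2(n_2-1)\cdots(n_2-m+2)$ is monic of degree $m-1$). A $\{0,1\}$-valued polynomial must be constant, so $\mfa_{n_1,2}=0$; as $n_1\ge2$ was arbitrary, $\mfa_{m,2}=0$ for all $m\ge2$. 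Then $f(n_1,n_2)=\mfa_{0,2}+n_1\mfa_{1,2}$ is affine and $\{0,1\}$-valued in $n_1$, forcing $\mfa_{1,2}=0$; and $\mfa_{0,2}=f(0,0)\in\{0,1\}$. Thus the only sequences $\{\mfa_{m,2}\}$ that can occur are $(0,0,\dots)$ and $(1,0,0,\dots)$, i.e. $\mathcal P=0$ or $1$ — this is Conjecture \ref{mfam}, and it yields $P_1(A^\infty(\R_\Theta^4))=\{0,1\}$.

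The step I expect to need the most care is the passage from $\mathcal P^2=\mathcal P$ in the algebra to the pointwise identity $f^2=f$ on $\N^2$: one must check that the commutative subalgebra generated by $u$ and $v$ is faithfully recorded by its eigenvalues on the monomials $t_1^{n_1}t_2^{n_2}$, equivalently that the functions $n_1(n_1-1)\cdots(n_1-l+1)\cdot n_2(n_2-1)\cdots(n_2-l'+1)$, $l,l'\ge0$, are linearly independent on $\N^2$ — which is immediate by unitriangularity. (Equivalently, $f^2=f$ is precisely the whole system of coefficient identities coming from $\mathcal P^2=\mathcal P$, of which \eqref{mfam2} records only the ``diagonal'' ones; notice that this final step uses only idempotency and not the self-adjointness relation \eqref{mfam20}, so the hypotheses of Conjecture \ref{mfam} are in fact more than enough.) I would also point out that the same strategy — writing a projector as a function of the commuting ``number operators'' $x_1x_2,\dots,x_{2n-1}x_{2n}$ — settles $P_0(A^\infty(\R_\Theta^{2n}))=\{0,1\}$ for every $n\ge1$ by an identical polynomial-boundedness argument.
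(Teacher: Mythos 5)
First, note that the paper offers no proof of Conjecture \ref{mfam} at all --- it is stated as an open problem --- so there is nothing on the paper's side to compare against, and your argument is a genuine contribution. Within its own terms it is sound and quite elegant: after the paper's reductions the surviving $\mathcal P$ really is a function of the commuting operators $U=\g_1^{-2}x_1x_2$ and $V=\g_2^{-2}x_3x_4$; the assignment $x_1\mapsto t_1\cdot$, $x_2\mapsto i\theta_{12}\partial_{t_1}$, $x_3\mapsto t_2\cdot$, $x_4\mapsto i\theta_{34}\partial_{t_2}$ respects the defining relations; the eigenvalue $f(n_1,n_2)$ is the finite sum you wrote; idempotency of $\mathcal P$ forces $f^2=f$ on $\N^2$; and the polynomial-rigidity step correctly gives $\mfa_{m,2}=0$ for $m\ge1$ and $\mfa_{0,2}\in\{0,1\}$. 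This cleanly establishes the consequence the paper actually wants, namely that every element of $P_1(A^\infty(\R_\Theta^4))$ of the reduced form equals $0$ or $1$, hence $P_1(A^\infty(\R_\Theta^4))=P_0(A^\infty(\R_\Theta^4))$.

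However, it does not prove Conjecture \ref{mfam} as stated. The conjecture's hypothesis is only the pair of identities \eqref{mfam2} and \eqref{mfam20}, and \eqref{mfam2} records only the coefficients of $x_3^{m-1}x_4^{m-1}$ extracted from \eqref{amm} --- the ``diagonal'' identities, as your own parenthetical concedes. Full idempotency, i.e.\ $f^2=f$ on all of $\N^2$, also encodes the vanishing of the off-diagonal coefficients of $\mathcal P^2$, and this is strictly more information: the sequence $\mfa_{0,2}=0$, $\mfa_{1,2}=1$, $\mfa_{m,2}=0$ for $m\ge2$ satisfies every instance of \eqref{mfam2} (the $m=1$ case reads $\mfa_{1,2}=\mfa_{1,2}^2$, and all other instances are $0=0$), yet the corresponding $f(n_1,n_2)=n_1$ is not idempotent, so this sequence is invisible to your mechanism and is only excluded by \eqref{mfam20}. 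Consequently your closing remark that ``the hypotheses of Conjecture \ref{mfam} are in fact more than enough'' has it backwards: you use strictly more than \eqref{mfam2} (all of $\mathcal P^2=\mathcal P$) and none of \eqref{mfam20}. Whether \eqref{mfam2} together with \eqref{mfam20} --- without the off-diagonal idempotency identities --- already forces the sequence to vanish is precisely the sequence problem the conjecture poses, and your argument leaves it open. (A smaller point: the claim that the same method settles $P_0(A^\infty(\R_\Theta^{2n}))=\{0,1\}$ for every $n\ge1$ fails at $n=1$, where $f(N)=\sum_{m\le N}b_m\,N!/(N-m)!$ is not a polynomial in $N$; that case needs the self-adjointness input as in Proposition \ref{p0}.)
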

	If Conjecture \ref{mfam} is true, then $\mathcal{P}\in P_0(A^\infty(\R_\Theta^{4}))$, which means $P_1(A^\infty(\R_\Theta^{4}))=P_0(A^\infty(\R_\Theta^{4}))$. For $P_k(A^\infty(\R_\Theta^{4}))$ where $k\geq2$, we can do similarly. If
	$$\mathcal{P}:=\sum_{p_1,p_2,p_3,p_4=0}^\infty a_{p_1,p_2,p_3,p_4}\g_1^{-p_1-p_2}\g_2^{-p_3-p_4}x_1^{p_1}x_2^{p_2}x_3^{p_3}x_4^{p_4}\in P_k(A^\infty(\R_\Theta^{4})),$$
	again we rewrite $\mathcal{P}$ to
	\begin{equation}
		\sum_{p_1,p_2=0}^\infty\left(\sum_{p_3,p_4=0}^\infty a_{p_1,p_2,p_3,p_4}\g_2^{-p_3-p_4}x_3^{p_3}x_4^{p_4}\right)\g_1^{-p_1-p_2}x_1^{p_1}x_2^{p_2}
	\end{equation}
	and set
	\begin{equation}
		\a_{p_1,p_2}:=\sum_{p_3,p_4=0}^\infty a_{p_1,p_2,p_3,p_4}\g_2^{-p_3-p_4}x_3^{p_3}x_4^{p_4},~\b_{n}:=\sum_{m=0}^n\frac{n!}{(n-p)!}\a_{p,p+k},
	\end{equation}
	then from the proof of Theorem \ref{pk} we have $\b_{n}\b_{n+k}=0$. Note that
	\begin{equation}
		\a_{m,m+k}=\sum_{p_3,p_4=0}^\infty a_{m,m+k,p_3,p_4}\g_2^{-p_3-p_4}x_3^{p_3}x_4^{p_4}=\sum_{r=0}^k\sum_{s=0}^k a_{m,m+k,m+r,m+s}\g_2^{-2m-r-s}x_3^{m+r}x_4^{m+s},
	\end{equation}
	so
	\begin{align*}
		0&=\b_{n}\b_{n+k}\\
		&=\sum_{p=0}^n\frac{n!}{(n-p)!}\sum_{r=0}^k\sum_{s=0}^k a_{p,p+k,p+r,p+s}\g_2^{-2p-r-s}x_3^{p+r}x_4^{p+s}\sum_{q=0}^{n+k}\frac{(n+k)!}{(n+k-q)!}\sum_{r=0}^k\sum_{s=0}^ka_{q,q+k,q+r,q+s}\\
		&\quad\times\g_2^{-2q-r-s}x_3^{q+r}x_4^{q+s}\\
		&=\sum_{p=0}^n\sum_{q=0}^{n+k}\sum_{r=0}^k\sum_{s=0}^k\sum_{u=0}^k\sum_{v=0}^k\frac{n!(n+k)!}{(n-p)!(n+k-q)!} a_{p,p+k,p+r,p+s}a_{q,q+k,q+u,q+v}\g_2^{-2p-2q-r-s-u-v}\\
		&\quad\times x_3^{p+r}x_4^{p+s}x_3^{q+u}x_4^{q+v}\\
		&=\sum_{p=0}^n\sum_{q=0}^{n+k}\sum_{r=0}^k\sum_{s=0}^k\sum_{u=0}^k\sum_{v=0}^k\frac{n!(n+k)!}{(n-p)!(n+k-q)!} a_{p,p+k,p+r,p+s}a_{q,q+k,q+u,q+v}\g_2^{-2p-2q-r-s-u-v}\\
		&\quad\times\sum_{h=0}^{q+u}\binom{q+u}{h}\frac{(p+s)!}{(p+s-q-u+h)!}\g_2^{2q+2u-2h}x_3^{p+r+h}x_4^{p+s-u+v+h},
	\end{align*}
	consider the coefficient of $x_3^mx_4^{m+2k}$ where $m\in\N$, we have
	\begin{equation}\label{mm2k}
		\sum_{p=0}^m\sum_{q=m-p}^{m+k}\frac{n!(n+k)!}{(n-p)!(n+k-q)!}\binom{q}{m-p}\frac{(p+k)!}{(m+k-q)!}a_{p,p+k,p,p+k}a_{q,q+k,q,q+k}=0, \forall n\in\N.
	\end{equation}
	Let $p=q=m/2$, \eqref{mm2k} implies $a_{p,p+k,p,p+k}^2=0$, so $a_{p,p+k,p,p+k}=0$, $\forall p\in\N$. Then consider the coefficients of $x_3^mx_4^{m+2k-2}$, we have
	\begin{equation}
		\begin{aligned}
			&\sum_{p=0}^m\sum_{q=0}^{m+k}\frac{n!(n+k)!}{(n-p)!(n+k-q)!}\sum_{r=0}^1\sum_{u=0}^1 \binom{q+u}{m-p-r}\frac{(p+r+k-1)!}{(m+k-q-u-1)!}\\
			\times&a_{p,p+k,p+r,p+r+k-1}a_{q,q+k,q+u,q+u+k-1}=0,~\forall n\in\N,
		\end{aligned}
	\end{equation}
	so
	\begin{equation}\label{mkppk}
		\sum_{r=0}^1\sum_{u=0}^1 \binom{q+u}{m-p-r}\frac{(p+r+k-1)!}{(m+k-q-u-1)!}a_{p,p+k,p+r,p+r+k-1}a_{q,q+k,q+u,q+u+k-1}=0,~\forall p,q\in\N.
	\end{equation}
	First we let $p=q$, $m=p+q+2$, then $a_{p,p+k,p+1,p+k}^2=0$, so $a_{p,p+k,p+1,p+k}=0$, $\forall p\in\N$, then \eqref{mkppk} implies $a_{p,p+k,p,p+k-1}=0$, $\forall p\in\N$. Next consider the coefficients of $x_3^mx_4^{m+2k-4}$, we have
	\begin{equation}
		\begin{aligned}
			&\sum_{p=0}^m\sum_{q=0}^{m+k}\frac{n!(n+k)!}{(n-p)!(n+k-q)!}\sum_{r=0}^2\sum_{u=0}^2 \binom{q+u}{m-p-r}\frac{(p+r+k-2)!}{(m+k-q-u-2)!}\\
			\times&a_{p,p+k,p+r,p+r+k-2}a_{q,q+k,q+u,q+u+k-2}=0,~\forall n\in\N,
		\end{aligned}
	\end{equation}
	so
	\begin{equation}\label{mkppk2}
		\sum_{r=0}^2\sum_{u=0}^2 \binom{q+u}{m-p-r}\frac{(p+r+k-2)!}{(m+k-q-u-2)!}a_{p,p+k,p+r,p+r+k-2}a_{q,q+k,q+u,q+u+k-2}=0,~\forall p,q\in\N.
	\end{equation}
	Let $p=q$, $m=p+q+4$, then $a_{p,p+k,p+2,p+k}^2=0$, so $a_{p,p+k,p+2,p+k}=0$, $\forall p\in\N$, then \eqref{mkppk2} implies
	\begin{equation}\label{mkppk21}
		\sum_{r=0}^1\sum_{u=0}^1 \binom{q+u}{m-p-r}\frac{(p+r+k-2)!}{(m+k-q-u-2)!}a_{p,p+k,p+r,p+r+k-2}a_{q,q+k,q+u,q+u+k-2}=0,~\forall p,q\in\N.
	\end{equation}
	Let $p=q$, $m=p+q+2$, then $a_{p,p+k,p+1,p+k-1}^2=0$, so $a_{p,p+k,p+1,p+k-1}=0$, $\forall p\in\N$,
	then \eqref{mkppk21} implies $a_{p,p+k,p,p+k-2}=0$, $\forall p\in\N$.
	
	Continue the above procedure, we can prove that $a_{p,p+k,u,v}=0$ where $p\leq u\leq v\leq p+k$. Conversely, by considering the coefficients of $x_3^{m+2k-2j}x_4^m$, $j=0,1,\ldots,k$, we have $a_{p,p+k,u,v}=0$ where $p\leq v\leq u\leq p+k$, which means $a_{p,p+k,u,v}=0$ where $p\leq u, v\leq p+k$.  Similarly $a_{p+k,p,u,v}=0$ where $p\leq u, v\leq p+k$. In fact we have partially proved the following conjecture:
	\begin{conjecture}\label{mfa4k}
		If $k\geq1$ and
		$$\mathcal{P}:=\sum_{p_1,p_2,p_3,p_4=0}^\infty a_{p_1,p_2,p_3,p_4}\g_1^{-p_1-p_2}\g_2^{-p_3-p_4}x_1^{p_1}x_2^{p_2}x_3^{p_3}x_4^{p_4}\in P_k(A^\infty(\R_\Theta^{4})),$$
		then $a_{p_1,p_2,p_3,p_4}=0$ where $\max_{1\leq r,s\leq 4}\n p_r-p_s\n=k$, which implies $P_k(A^\infty(\R_\Theta^{4}))=P_{k-1}(A^\infty(\R_\Theta^{4}))=\ldots=P_0(A^\infty(\R_\Theta^{4}))$.
	\end{conjecture}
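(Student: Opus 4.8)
Here is the shape of a strategy that would establish the conjecture in full; the partial proof above carries out its first stage. The assertion is equivalent to $P_k(A^\infty(\R_\Theta^{4}))\subseteq P_{k-1}(A^\infty(\R_\Theta^{4}))$: since $P_{k-1}\subseteq P_k$ trivially, it is enough to show that every \emph{boundary coefficient} of a projector $\mathcal P\in P_k$---one with $\max_{1\le r,s\le4}\n p_r-p_s\n=k$---vanishes, for then $\mathcal P\in P_{k-1}$ and induction on $k$ collapses $P_k=P_{k-1}=\dots=P_0$. For such a boundary index write $p:=\min_r p_r$, so $\max_r p_r=p+k$ and every $p_i\in\{p,\dots,p+k\}$, and classify it by where the extreme values $p$ and $p+k$ occur: type (A), both inside the commuting pair $\{p_1,p_2\}$, i.e.\ $\n p_1-p_2\n=k$; type (B), both inside $\{p_3,p_4\}$; type (C), exactly one of $p,p+k$ in each pair. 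The plan is to kill type (A), then type (B) by the mirror argument, then type (C).

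Type (A) is exactly the computation already displayed: regard $\mathcal P$ as a series in $x_1,x_2$ with operator coefficients $\a_{p_1,p_2}:=\sum_{p_3,p_4}a_{p_1,p_2,p_3,p_4}\g_2^{-p_3-p_4}x_3^{p_3}x_4^{p_4}$ in the copy of $A^\infty(\R_\theta^2)$ spanned by $x_3,x_4$; the derivation of Theorem~\ref{pk} runs verbatim with these operator-valued coefficients, yields the operator identity $\b_n\b_{n+k}=0$, and then---reading off the coefficient of each monomial $x_3^{a}x_4^{b}$, letting $n$ vary, and specialising to the diagonal $p=q$ so as to isolate one square $a_{\,\cdot\,}^{2}=0$ at a time---gives $a_{p,p+k,u,v}=a_{p+k,p,u,v}=0$ for all $p\le u,v\le p+k$. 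As that computation shows, this step, unlike the two-variable Theorem~\ref{pk}, does not appeal to Conjecture~\ref{cjj}. Type (B) follows by running the same argument with the roles of $x_1,x_2$ and $x_3,x_4$ exchanged, giving $a_{u,v,p,p+k}=a_{u,v,p+k,p}=0$ for all $p\le u,v\le p+k$.

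Within type (C) distinguish the \emph{forward-shifted} indices, where $\{p_3,p_4\}$ contains the maximum $p+k$ (and $\{p_1,p_2\}$ the minimum $p$), from the \emph{backward-shifted} ones, where these are reversed. Once types (A) and (B) are disposed of, strip off the off-diagonal coefficients $\a_{p_1,p_2}$ with $\n p_1-p_2\n<k$ by a downward induction on $\n p_1-p_2\n$ that mirrors the type (A) step, leaving $\mathcal P$ equal to its diagonal part $\sum_m\a_{m,m}\g_1^{-2m}x_1^mx_2^m$; then $\mathcal P=\mathcal P^2$ and $\mathcal P=\mathcal P^{*}$ become recursions on the $\a_{m,m}$ of exactly the kind written out for $k=1$ above, cf.\ \eqref{prob} and \eqref{amm}. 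The four-dimensional analogue of the involution relation \eqref{a*} carries two independent shift parameters and expresses each diagonal coefficient as a series of conjugates of coefficients with componentwise larger indices; combining the finite relations coming from $\mathcal P=\mathcal P^2$ with the convergence constraint coming from $\mathcal P=\mathcal P^{*}$ (as in the proof of Proposition~\ref{p0}) forces the forward-shifted type (C) coefficients to vanish---this is the mechanism behind ``$a_{m,m,m+1,m+1}=0$'' in the text. What then survives is the family of backward-shifted type (C) coefficients; these obey a nonlinear recursion of the shape \eqref{mfam2} coming from $\mathcal P=\mathcal P^2$, together with a tail-vanishing condition of the shape \eqref{mfam20} coming from $\mathcal P=\mathcal P^{*}$, and one finishes by invoking Conjecture~\ref{mfam} for $k=1$ and its evident higher-$k$ analogues.

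The backward-shifted step is the real obstacle, and is exactly why the statement is phrased as a conjecture. Type (A), type (B), and---at least when $k=1$---the forward-shifted part of type (C) go through unconditionally, being controlled by linear-algebraic devices (coefficient descent, binomial transforms, and finiteness/convergence arguments of the Proposition~\ref{p0} type). But the backward-shifted coefficients are constrained only by the nonlinear system \eqref{mfam2}--\eqref{mfam20}, and I can reduce their vanishing merely to new sequence statements (Conjecture~\ref{mfam} and its generalisations), which I expect to be of difficulty comparable to Conjecture~\ref{cjj}: one has to combine the strong sparsity pattern (the vanishing of products of coefficients a fixed distance apart) with a Fabry-gap or geometric-decay argument in the spirit of the remark following Theorem~\ref{m2}. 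Without progress on such sequence problems I would not expect the present method to go beyond the partial result already obtained.
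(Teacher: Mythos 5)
Your proposal follows essentially the same route as the paper's own partial argument: the type (A)/(B) boundary coefficients are eliminated exactly as in the paper's computation with operator-valued coefficients $\a_{p_1,p_2}$ and the relation $\b_n\b_{n+k}=0$, and the remaining backward-shifted coefficients are reduced to sequence statements of the form \eqref{mfam2}--\eqref{mfam20}, i.e.\ Conjecture \ref{mfam} and its higher-$k$ analogues. Since the statement is itself only a conjecture in the paper, neither argument is complete, and you correctly locate the unproven core in the same place the paper does.
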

	It's not hard to see that Conjecture \ref{mfam} is included in Conjecture \ref{mfa4k}, and the latter involves a couple of number sequences satisfying conditions like \eqref{mfam2} and \eqref{mfam20}. We leave the study of Conjecture \ref{mfa4k} and $P_k(A^\infty(\R_\Theta^{2n}))$ where $k\geq1$ and $n\geq3$ for future and naturally, we propose  Conjecture \ref{conjpk}. Finally, the study of $P(A^\infty(\R_\Theta^{2n}))$ where $n\geq2$, which are equivalent to the calculation of $K_0(A^\infty(\mathbb{R}^{2n}_\Theta))$ and seem much more difficult than $P(A^\infty(\R_\theta^{2}))$, are left as Conjecture \ref{k0nTh} for future study.

	\section{A representation of self-adjoint elements of $A^\infty(\R_\theta^2)$}\label{sfa}
	
	In this section we give a characterization of self-adjoint elements of $A^\infty(\R_\theta^2)$. The main result of this section is the following theorem.
	\begin{theorem}\label{sa} Let $\{B_m\}_{m\geq 0}$ be the Bernoulli number series and for $n\in\N$,
		\begin{equation}
			a_n:=\frac{(-1)^{n}(2^{2n+2}-1)B_{2n+2}}{n+1}.
		\end{equation}
		Then if \begin{equation}\label{t}
			T=\sum_{p,q=0}^\infty a_{p,q}x^py^q\in A^\infty(\R_\theta^2)
		\end{equation}
		is self-adjoint, we have
		\begin{equation}\label{pqima}
			\Im{a_{p,q}}=\sum_{k=0}^{\infty}a_{k}\binom{p+2k+1}{2k+1}\frac{(q+2k+1)!}{q!}\theta^{2k+1}\Re{a_{p+2k+1,q+2k+1}},~\forall p,q\in\N.
		\end{equation}
	\end{theorem}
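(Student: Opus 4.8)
The plan is to turn self-adjointness of $T$ into a relation among its coefficients, split that relation into its real and imaginary parts, recognise the imaginary part as a triangular linear system whose solution is forced to be the stated formula, and reduce the verification to a single scalar identity for the numbers $a_{n}$. Concretely, I would first compute $T^{*}=\sum_{p,q}\bar a_{p,q}\,y^{q}x^{p}$ and normal-order each $y^{q}x^{p}$ by Theorem \ref{xy} (with $a=i\theta$); collecting the coefficient of $x^{M}y^{N}$ shows that $T=T^{*}$ is equivalent to
\[
a_{p,q}=\sum_{k\ge 0}d^{(k)}_{p,q}\,i^{k}\,\bar a_{p+k,q+k},\qquad
d^{(k)}_{p,q}:=\binom{p+k}{k}\frac{(q+k)!}{q!}\,\theta^{k}\quad(d^{(0)}_{p,q}=1),
\]
for all $p,q$. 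Writing $a_{p+k,q+k}=\Re a_{p+k,q+k}+i\,\Im a_{p+k,q+k}$ and sorting the sum by the residue of $k$ modulo $4$ (which controls $i^{k}$), the imaginary part of this relation yields the identity $(\ast)$:
\[
2\,\Im a_{p,q}=\sum_{m\ge 0}(-1)^{m}d^{(2m+1)}_{p,q}\,\Re a_{p+2m+1,q+2m+1}-\sum_{m\ge 1}(-1)^{m}d^{(2m)}_{p,q}\,\Im a_{p+2m,q+2m},
\]
while the real part is the ``dual'' constraint tying the even-shifted $\Re a$'s to the odd-shifted $\Im a$'s (automatically satisfied, and not needed below).

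Read at every base point along the diagonal $\{(p+t,q+t)\}_{t\ge 0}$, relation $(\ast)$ expresses each $\Im a_{p+t,q+t}$ in terms of the $\Re a$'s and of $\Im a$ at points strictly further out along that diagonal; granting the convergence already implicit in Theorem \ref{tt*} --- or working first on the polynomial subalgebra, where the system is finite and solvable by downward induction --- this determines $\Im a$ from $\Re a$ uniquely. It therefore suffices to check that the candidate $G(p,q):=\sum_{k\ge 0}a_{k}\,d^{(2k+1)}_{p,q}\,\Re a_{p+2k+1,q+2k+1}$ solves $(\ast)$ at every base point. Substituting $G$ for the $\Im a$'s on the right of $(\ast)$ and using the factorial identity
\[
d^{(2m)}_{p,q}\,d^{(2l+1)}_{p+2m,q+2m}=\binom{2m+2l+1}{2m}\,d^{(2m+2l+1)}_{p,q},
\]
which is just the rising-factorial rule $(c)_{m}(c+m)_{n}=(c)_{m+n}$ applied separately to the $p$- and $q$-factors, the double sum collapses, and matching the coefficient of $d^{(2k+1)}_{p,q}\,\Re a_{p+2k+1,q+2k+1}$ reduces everything to the single scalar identity $(\dagger)$:
\[
2a_{k}+\sum_{m=1}^{k}(-1)^{m}\binom{2k+1}{2m}a_{k-m}=(-1)^{k},\qquad k\ge 0.
\]

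To finish I would prove $(\dagger)$ by generating functions. The classical expansion $\tan u=\sum_{n\ge 1}\frac{(-1)^{n-1}2^{2n}(2^{2n}-1)B_{2n}}{(2n)!}u^{2n-1}$ gives $\sum_{k\ge 0}a_{k}\frac{x^{2k+1}}{(2k+1)!}=\tan\frac{x}{2}$, equivalently $\sum_{k\ge 0}(-1)^{k}a_{k}\frac{x^{2k+1}}{(2k+1)!}=\tanh\frac{x}{2}$. Rewriting $(\dagger)$ as $\sum_{j=0}^{k}(-1)^{j}\binom{2k+1}{2j+1}a_{j}=1-(-1)^{k}a_{k}$, multiplying by $\frac{x^{2k+1}}{(2k+1)!}$, summing over $k$, and taking the Cauchy product with $\sum_{l\ge 0}\frac{x^{2l}}{(2l)!}=\cosh x$, identity $(\dagger)$ becomes $\tanh\frac{x}{2}\,(1+\cosh x)=\sinh x$, which is the half-angle identity $\tanh\frac{x}{2}\cdot 2\cosh^{2}\frac{x}{2}=2\sinh\frac{x}{2}\cosh\frac{x}{2}$. (Alternatively $(\dagger)$ can be proved by induction directly from a recurrence for $(2^{2n+2}-1)B_{2n+2}$, without invoking $\tan$.)

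I expect the genuine work to lie in two places. Combinatorially, the crux is $(\dagger)$, and the main danger is bookkeeping --- in particular the factor $2$ in front of $a_{k}$, which is exactly what forces the argument $x/2$ and hence the coefficient $\frac{(2^{2n+2}-1)B_{2n+2}}{n+1}$ rather than a bare tangent number; getting the signs and index shifts in the passage from $(\ast)$ to $(\dagger)$ right is where slips are most likely. Analytically, the rearrangements of the iterated sums and the convergence of $G(p,q)$ need care: the weights $d^{(2k+1)}_{p,q}$ grow super-geometrically in $k$, so the Schwartz decay of $\{a_{p,q}\}$ must be used carefully, or the identities must be read formally and then extended from the polynomial subalgebra to $A^{\infty}(\R^{2}_{\theta})$ by density --- the same point as in the proof of Theorem \ref{tt*}.
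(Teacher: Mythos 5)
Your proposal is essentially correct and reaches the same combinatorial heart as the paper --- the recurrence $2a_k+\sum_{m=1}^{k}(-1)^m\binom{2k+1}{2m}a_{k-m}=(-1)^k$, which is exactly the paper's \eqref{ac} --- but by a genuinely different route. The paper works with the \emph{real}-part consequence of $a_{p,q}=\sum_h\binom{p+h}{h}\frac{(q+h)!}{q!}(i\theta)^h\bar a_{p+h,q+h}$ at all shifted base points $(p+2k,q+2k)$ and explicitly \emph{constructs} the inversion: it introduces an auxiliary sequence $\{b_k\}$ defined by $\sum_{k=1}^{n}(-1)^{k-1}b_k\binom{p+2n+1}{2n+1-2k}=\binom{p+2n+1}{2n+1}$, forms the weighted combination that isolates $\Im a_{p+1,q+1}$, reads off the coefficients $a_h$ in terms of the $b_j$'s, shows they satisfy \eqref{ac}, treats $\Im a_{0,0},\Im a_{0,1},\Im a_{1,0}$ separately from the imaginary-part relation, and finally matches against the Bernoulli expression via the generating function $z/(e^z-1)$. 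You instead keep only the imaginary-part relation, \emph{verify} that the candidate solves it (your factorial identity $d^{(2m)}_{p,q}d^{(2l+1)}_{p+2m,q+2m}=\binom{2m+2l+1}{2m}d^{(2m+2l+1)}_{p,q}$ is correct and does collapse the double sum to $(\dagger)$), and prove $(\dagger)$ via $\tan(x/2)$ --- which is equivalent to the paper's $z/(e^z-1)$ computation but cleaner. What your approach buys is brevity and the avoidance of the ad hoc sequence $\{b_k\}$; what it costs is the extra logical step of \emph{uniqueness}.

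That uniqueness step is the one place you should be more careful. Your system expresses $\Im a_{p,q}$ in terms of $\Im a_{p+2m,q+2m}$ for $m\geq 1$, i.e.\ it is triangular ``with no top'': downward induction works for polynomial $T$ (where the outer diagonal vanishes), but for a general element of $A^\infty(\R^2_\theta)$ the homogeneous system $2u_t=-\sum_{m\geq1}(-1)^m d^{(2m)}_{p+2t,q+2t}u_{t+m}$ is not obviously forced to have only the zero solution, and ``density of the polynomial subalgebra'' does not transparently transport a coefficientwise identity with super-exponentially growing weights. The paper's construction sidesteps uniqueness entirely because it solves for $\Im a_{p+1,q+1}$ directly; to be fair, its own infinite rearrangements with the weights $b_k\frac{(q+2k)!}{q!}\theta^{2k}$ are equally formal, so your argument is at a comparable level of rigor --- but you should either prove the uniqueness claim or replace it with the paper's explicit inversion. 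One small misstatement: the real-part relation is not ``automatically satisfied'' once \eqref{pqima} holds --- it is a further constraint on the $\Re a_{p,q}$'s (indeed it is the relation the paper actually exploits); this does not affect your logic, since you never use it.
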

	For instance, if we set $\Re a_{p,q}=e^{-p^2-q^2}$, we will get a self-adjoint element of $A^\infty(\R_\theta^2)$.
	
	Maybe it's surprising that Bernoulli numbers appear here. First let's consider the nonsmooth case first. Every element $T\in A(\R_\theta^2)$ can be written as
	\begin{equation}\label{eleinf}
		T=\sum_{k=0}^n\sum_{p+q=k}a_{p,q}x^py^q
	\end{equation}
	for some $n\in\N$. If $T=T^*$, then \eqref{mn} implies
	\begin{equation}
		\sum_{k=0}^n\sum_{p+q=k}a_{p,q}x^py^q=\sum_{k=0}^n\sum_{p+q=k}\bar{a}_{p,q}y^qx^p=\sum_{k=0}^n\sum_{p+q=k}\bar{a}_{p,q}\sum_{h=0}^{p}\binom{p}{h}\frac{q!}{(q-p+h)!}a^{p-h}x^hy^{q-p+h},
	\end{equation}
	compare the coefficients, we have
	\begin{equation}\label{fpq}
		a_{p,q}=\sum_{h=0}^{n}\binom{p+h}{h}\frac{(q+h)!}{q!}a^h\bar{a}_{p+h,q+h}.
	\end{equation}
	Here we set $a_{p,q}=0$ if $p+q>n$. For $p+q>n-2$, we have $a_{p,q}=\bar{a}_{p,q}$, which means $a_{p,q}\in\R$. And for $k\in\N$, \eqref{fpq} implies
	\begin{equation}\label{pqk}
		a_{p-k,q-k}=\bar{a}_{p-k,q-k}+\sum_{h=1}^{k}\binom{p-k+h}{h}\frac{(q-k+h)!}{(q-k)!}a^h\bar{a}_{p-k+h,q-k+h},
	\end{equation}
	$a=i\theta$ is a pure imaginary number, when $k=2l$ is even, \eqref{pqk} implies
	\begin{align*}
		0=&\sum_{h=1}^{l}\binom{p-2l+2h}{2h}\frac{(q-2l+2h)!}{(q-2l)!}a^{2h}\Re{{a}_{p-2l+2h,q-2l+2h}}\\
		&-i\sum_{h=1}^{l}\binom{p-2l+2h-1}{2h-1}\frac{(q-2l+2h-1)!}{(q-2l)!}a^{2h-1}\Im{{a}_{p-2l+2h-1,q-2l+2h-1}},\\
		2i\Im{a_{p-2l,q-2l}}=&-i\sum_{h=1}^{l}\binom{p-2l+2h}{2h}\frac{(q-2l+2h)!}{(q-2l)!}a^{2h}\Im{{a}_{p-2l+2h,q-2l+2h}}\\
		&+\sum_{h=1}^{l}\binom{p-2l+2h-1}{2h-1}\frac{(q-2l+2h-1)!}{(q-2l)!}a^{2h-1}\Re{{a}_{p-2l+2h-1,q-2l+2h-1}}.
	\end{align*}
	When $k=2l-1$ is odd, \eqref{pqk} implies
	\begin{align*}
		0=&\sum_{h=1}^{l-1}\binom{p-2l+1+2h}{2h}\frac{(q-2l+1+2h)!}{(q-2l+1)!}a^{2h}\Re{{a}_{p-2l+1+2h,q-2l+1+2h}}\\
		&-i\sum_{h=1}^{l}\binom{p-2l+2h}{2h-1}\frac{(q-2l+2h)!}{(q-2l+1)!}a^{2h-1}\Im{{a}_{p-2l+2h,q-2l+2h}},\\
		2i\Im{a_{p-2l+1,q-2l+1}}=&-i\sum_{h=1}^{l-1}\binom{p-2l+1+2h}{2h}\frac{(q-2l+1+2h)!}{(q-2l+1)!}a^{2h}\Im{{a}_{p-2l+1+2h,q-2l+1+2h}}\\
		&+\sum_{h=1}^{l}\binom{p-2l+2h}{2h-1}\frac{(q-2l+2h)!}{(q-2l+1)!}a^{2h-1}\Re{{a}_{p-2l+2h,q-2l+2h}}.
	\end{align*}
	We compute $\Im{a_{p-k,q-k}}$ for small $k$, from which we can find the general formula. First,
	\begin{equation}
		a_{p-1,q-1}=\bar{a}_{p-1,q-1}+\binom{p}{1}\frac{q!}{(q-1)!}a\bar{a}_{p,q}=\bar{a}_{p-1,q-1}+pqa{a}_{p,q},
	\end{equation}
	which means
	\begin{equation}
		\Im{a_{p-1,q-1}}=\frac{1}{2}pq\theta{a}_{p,q}.
	\end{equation}
	Also,
	\begin{align*}
		a_{p-2,q-2}&=\bar{a}_{p-2,q-2}+\binom{p-1}{1}\frac{(q-1)!}{(q-2)!}a\bar{a}_{p-1,q-1}+\binom{p}{2}\frac{q!}{(q-2)!}a^2\bar{a}_{p,q}\\
		&=\bar{a}_{p-2,q-2}+(p-1)(q-1)a\bar{a}_{p-1,q-1}+\frac{1}{2}p(p-1)q(q-1)a^2{a}_{p,q},
	\end{align*}
	take real and imaginary part of the above equality, we have
	\begin{align*}
		(p-1)(q-1)\theta\Im{{a}_{p-1,q-1}}-\frac{1}{2}p(p-1)q(q-1)\theta^2{a}_{p,q}&=0,\\
		2\Im{a_{p-2,q-2}}-(p-1)(q-1)\theta\Re{{a}_{p-1,q-1}}&=0,
	\end{align*}
	which means
	\begin{equation}
		\Im{a_{p-2,q-2}}=\frac{1}{2}(p-1)(q-1)\theta\Re{{a}_{p-1,q-1}}.
	\end{equation}
	Continue the above procedure,
	\begin{equation}
		a_{p-3,q-3}=\bar{a}_{p-3,q-3}+\binom{p-2}{1}\frac{(q-2)!}{(q-3)!}a\bar{a}_{p-2,q-2}+\binom{p-1}{2}\frac{(q-1)!}{(q-3)!}a^2\bar{a}_{p-1,q-1}+\binom{p}{3}\frac{q!}{(q-3)!}a^3\bar{a}_{p,q},
	\end{equation}
	again, take real and imaginary part of the above equality, we have
	\begin{align*}
		(p-2)(q-2)\theta\Im{{a}_{p-2,q-2}}-\binom{p-1}{2}\frac{(q-1)!}{(q-3)!}\theta^2\Re{{a}_{p-1,q-1}}&=0,\\
		2\Im{a_{p-3,q-3}}-(p-2)(q-2)\theta\Re{{a}_{p-2,q-2}}-\binom{p-1}{2}\frac{(q-1)!}{(q-3)!}\theta^2\Im{{a}_{p-1,q-1}}+\binom{p}{3}\frac{q!}{(q-3)!}\theta^3{a}_{p,q}&=0,
	\end{align*}
	so
	\begin{equation}
		\Im{a_{p-3,q-3}}=\frac{1}{2}\binom{p-2}{1}\frac{(q-2)!}{(q-3)!}\theta\Re{{a}_{p-2,q-2}}+\frac{1}{4}\binom{p}{3}\frac{q!}{(q-3)!}\theta^3{a}_{p,q}.
	\end{equation}
	For $a_{p-4,q-4}$,
	\begin{align*}
		a_{p-4,q-4}=&\bar{a}_{p-4,q-4}+\binom{p-3}{1}\frac{(q-3)!}{(q-4)!}a\bar{a}_{p-3,q-3}+\binom{p-2}{2}\frac{(q-2)!}{(q-4)!}a^2\bar{a}_{p-2,q-2}\\
		&+\binom{p-1}{3}\frac{(q-1)!}{(q-4)!}a^3\bar{a}_{p-1,q-1}+\binom{p}{4}\frac{q!}{(q-4)!}a^4\bar{a}_{p,q},
	\end{align*}
	then
	\begin{align*}
		\binom{p-3}{1}\frac{(q-3)!}{(q-4)!}\theta\Im{{a}_{p-3,q-3}}-\binom{p-2}{2}\frac{(q-2)!}{(q-4)!}\theta^2\Re{a_{p-2,q-2}}\\
		-\binom{p-1}{3}\frac{(q-1)!}{(q-4)!}\theta^3\Im{{a}_{p-1,q-1}}+\binom{p}{4}\frac{q!}{(q-4)!}\theta^4\Re{{a}_{p,q}}&=0,\\
		2\Im{a_{p-4,q-4}}-\binom{p-3}{1}\frac{(q-3)!}{(q-4)!}\theta\Re{{a}_{p-3,q-3}}-\binom{p-2}{2}\frac{(q-2)!}{(q-4)!}\theta^2\Im{{a}_{p-2,q-2}}\\
		+\binom{p-1}{3}\frac{(q-1)!}{(q-4)!}\theta^3\Re{{a}_{p-1,q-1}}&=0,
	\end{align*}
	so
	\begin{align*}
		\Im{a_{p-4,q-4}}=&\frac{1}{2}\binom{p-3}{1}\frac{(q-3)!}{(q-4)!}\theta\Re{{a}_{p-3,q-3}}+\frac{1}{2}\binom{p-2}{2}\frac{(q-2)!}{(q-4)!}\theta^2\\
		&\times\frac{1}{2}(p-1)(q-1)\theta\Re{{a}_{p-1,q-1}}-\frac{1}{2}\binom{p-1}{3}\frac{(q-1)!}{(q-4)!}\theta^3\Re{{a}_{p-1,q-1}}\\
		=&\frac{1}{2}\binom{p-3}{1}\frac{(q-3)!}{(q-4)!}\theta\Re{{a}_{p-3,q-3}}+\frac{1}{4}\binom{p-1}{3}\frac{(q-1)!}{(q-4)!}\theta^3\Re{{a}_{p-1,q-1}}.
	\end{align*}
	For $a_{p-5,q-5}$,
	\begin{align*}
		a_{p-5,q-5}=&\bar{a}_{p-5,q-5}+\binom{p-4}{1}\frac{(q-4)!}{(q-5)!}a\bar{a}_{p-4,q-4}+\binom{p-3}{2}\frac{(q-3)!}{(q-5)!}a^2\bar{a}_{p-3,q-3}\\
		&+\binom{p-2}{3}\frac{(q-2)!}{(q-5)!}a^3\bar{a}_{p-2,q-2}+\binom{p-1}{4}\frac{(q-1)!}{(q-5)!}a^4\bar{a}_{p-1,q-1}+\binom{p}{5}\frac{q!}{(q-5)!}a^5\bar{a}_{p,q},
	\end{align*}
	then
	\begin{align*}
		2\Im{a_{p-5,q-5}}=&\binom{p-4}{1}\frac{(q-4)!}{(q-5)!}\theta\Re{{a}_{p-4,q-4}}+\binom{p-3}{2}\frac{(q-3)!}{(q-5)!}\theta^2\Im{{a}_{p-3,q-3}}\\
		&-\binom{p-2}{3}\frac{(q-2)!}{(q-5)!}\theta^3\Re{{a}_{p-2,q-2}}-\binom{p-1}{4}\frac{(q-1)!}{(q-5)!}\theta^4\Im{{a}_{p-1,q-1}}\\
		&+\binom{p}{5}\frac{q!}{(q-5)!}\theta^5{a}_{p,q}\\
		=&\binom{p-4}{1}\frac{(q-4)!}{(q-5)!}\theta\Re{{a}_{p-4,q-4}}+\binom{p-3}{2}\frac{(q-3)!}{(q-5)!}\theta^2\left(\frac{1}{2}\binom{p-2}{1}\frac{(q-2)!}{(q-3)!}\right.\\
		&\left.\times\theta\Re{{a}_{p-2,q-2}}+\frac{1}{4}\binom{p}{3}\frac{q!}{(q-3)!}\theta^3{a}_{p,q}\right)-\binom{p-2}{3}\frac{(q-2)!}{(q-5)!}\theta^3\Re{{a}_{p-2,q-2}}\\
		&-\binom{p-1}{4}\frac{(q-1)!}{(q-5)!}\theta^4\times\frac{1}{2}pq\theta{a}_{p,q}+\binom{p}{5}\frac{q!}{(q-5)!}\theta^5{a}_{p,q}\\
		=&\binom{p-4}{1}\frac{(q-4)!}{(q-5)!}\theta\Re{{a}_{p-4,q-4}}+\frac{1}{2}\binom{p-2}{3}\frac{(q-2)!}{(q-5)!}\theta^3\Re{{a}_{p-2,q-2}}\\
		&+\binom{p}{5}\frac{q!}{(q-5)!}\theta^5{a}_{p,q},
	\end{align*}
	so
	\begin{align*}
		\Im{a_{p-5,q-5}}=&\frac{1}{2}\binom{p-4}{1}\frac{(q-4)!}{(q-5)!}\theta\Re{{a}_{p-4,q-4}}+\frac{1}{4}\binom{p-2}{3}\frac{(q-2)!}{(q-5)!}\theta^3\Re{{a}_{p-2,q-2}}+\frac{1}{2}\binom{p}{5}\frac{q!}{(q-5)!}\theta^5{a}_{p,q}.
	\end{align*}
	For $a_{p-6,q-6}$,
	\begin{align*}
		a_{p-6,q-6}=&\bar{a}_{p-6,q-6}+\binom{p-5}{1}\frac{(q-5)!}{(q-6)!}a\bar{a}_{p-5,q-5}+\binom{p-4}{2}\frac{(q-4)!}{(q-6)!}a^2\bar{a}_{p-4,q-4}\\
		&+\binom{p-3}{3}\frac{(q-3)!}{(q-6)!}a^3\bar{a}_{p-3,q-3}+\binom{p-2}{4}\frac{(q-2)!}{(q-6)!}a^4\bar{a}_{p-2,q-2}\\
		&+\binom{p-1}{5}\frac{(q-1)!}{(q-6)!}a^5\bar{a}_{p-1,q-1}+\binom{p}{6}\frac{q!}{(q-6)!}a^6\bar{a}_{p,q},
	\end{align*}
	then,
	\begin{align*}
		2\Im{a_{p-6,q-6}}=&\binom{p-5}{1}\frac{(q-5)!}{(q-6)!}\theta\Re{{a}_{p-5,q-5}}+\binom{p-4}{2}\frac{(q-4)!}{(q-6)!}\theta^2\Im{{a}_{p-4,q-4}}\\
		&-\binom{p-3}{3}\frac{(q-3)!}{(q-6)!}\theta^3\Re{{a}_{p-3,q-3}}-\binom{p-2}{4}\frac{(q-2)!}{(q-6)!}\theta^4\Im{{a}_{p-2,q-2}}\\
		&+\binom{p-1}{5}\frac{(q-1)!}{(q-6)!}\theta^5\Re{{a}_{p-1,q-1}}+\binom{p}{6}\frac{q!}{(q-6)!}\theta^6\Im{{a}_{p,q}}\\
		=&\binom{p-5}{1}\frac{(q-5)!}{(q-6)!}\theta\Re{{a}_{p-5,q-5}}+\binom{p-4}{2}\frac{(q-4)!}{(q-6)!}\theta^2(\frac{1}{2}\binom{p-3}{1}\frac{(q-3)!}{(q-4)!}\theta\\
		&\times\Re{{a}_{p-3,q-3}}+\frac{1}{4}\binom{p-1}{3}\frac{(q-1)!}{(q-4)!}\theta^3\Re{{a}_{p-1,q-1}})-\binom{p-3}{3}\frac{(q-3)!}{(q-6)!}\theta^3\Re{{a}_{p-3,q-3}}\\
		&-\binom{p-2}{4}\frac{(q-2)!}{(q-6)!}\theta^4\times\frac{1}{2}(p-1)(q-1)\theta\Re{{a}_{p-1,q-1}}+\binom{p-1}{5}\frac{(q-1)!}{(q-6)!}\theta^5\Re{{a}_{p-1,q-1}}\\
		=&\binom{p-5}{1}\frac{(q-5)!}{(q-6)!}\theta\Re{{a}_{p-5,q-5}}+\frac{1}{2}\binom{p-3}{3}\frac{(q-3)!}{(q-6)!}\theta^3\Re{{a}_{p-3,q-3}}\\
		&+\binom{p-1}{5}\frac{(q-1)!}{(q-6)!}\theta^5\Re{{a}_{p-1,q-1}},
	\end{align*}
	so
	\begin{align*}
		\Im{a_{p-6,q-6}}=&\frac{1}{2}\binom{p-5}{1}\frac{(q-5)!}{(q-6)!}\theta\Re{{a}_{p-5,q-5}}+\frac{1}{4}\binom{p-3}{3}\frac{(q-3)!}{(q-6)!}\theta^3\Re{{a}_{p-3,q-3}}\\
		&+\frac{1}{2}\binom{p-1}{5}\frac{(q-1)!}{(q-6)!}\theta^5\Re{{a}_{p-1,q-1}}.
	\end{align*}
	For $a_{p-7,q-7}$,
	\begin{align*}
		a_{p-7,q-7}=&\bar{a}_{p-7,q-7}+\binom{p-6}{1}\frac{(q-6)!}{(q-7)!}a\bar{a}_{p-6,q-6}+\binom{p-5}{2}\frac{(q-5)!}{(q-7)!}a^2\bar{a}_{p-5,q-5}\\
		&+\binom{p-4}{3}\frac{(q-4)!}{(q-7)!}a^3\bar{a}_{p-4,q-4}+\binom{p-3}{4}\frac{(q-3)!}{(q-7)!}a^4\bar{a}_{p-3,q-3}\\&+\binom{p-2}{5}\frac{(q-2)!}{(q-7)!}a^5\bar{a}_{p-2,q-2}+\binom{p-1}{6}\frac{(q-1)!}{(q-7)!}a^6\bar{a}_{p-1,q-1}+\binom{p}{7}\frac{q!}{(q-7)!}a^7\bar{a}_{p,q},
	\end{align*}
	then
	\begin{align*}
		2\Im{a_{p-7,q-7}}=&\binom{p-6}{1}\frac{(q-6)!}{(q-7)!}\theta\Re{{a}_{p-6,q-6}}+\binom{p-5}{2}\frac{(q-5)!}{(q-7)!}\theta^2\Im{{a}_{p-5,q-5}}\\
		&-\binom{p-4}{3}\frac{(q-4)!}{(q-7)!}\theta^3\Re{{a}_{p-4,q-4}}-\binom{p-3}{4}\frac{(q-3)!}{(q-7)!}\theta^4\Im{{a}_{p-3,q-3}}\\
		&+\binom{p-2}{5}\frac{(q-2)!}{(q-7)!}\theta^5\Re{{a}_{p-2,q-2}}+\binom{p-1}{6}\frac{(q-1)!}{(q-7)!}\theta^6\Im{{a}_{p-1,q-1}}\\
		&-\binom{p}{7}\frac{q!}{(q-7)!}\theta^7{a}_{p,q}\\
		=&\binom{p-6}{1}\frac{(q-6)!}{(q-7)!}\theta\Re{{a}_{p-6,q-6}}+\binom{p-5}{2}\frac{(q-5)!}{(q-7)!}\theta^2(\frac{1}{2}\binom{p-4}{1}\frac{(q-4)!}{(q-5)!}\\
		&\times\theta\Re{{a}_{p-4,q-4}}+\frac{1}{4}\binom{p-2}{3}\frac{(q-2)!}{(q-5)!}\theta^3\Re{{a}_{p-2,q-2}}+\frac{1}{2}\binom{p}{5}\frac{q!}{(q-5)!}\theta^5{a}_{p,q})\\
		&-\binom{p-4}{3}\frac{(q-4)!}{(q-7)!}\theta^3\Re{{a}_{p-4,q-4}}-\binom{p-3}{4}\frac{(q-3)!}{(q-7)!}\theta^4(\frac{1}{2}\binom{p-2}{1}\frac{(q-2)!}{(q-3)!}\\
		&\times\theta\Re{{a}_{p-2,q-2}}+\frac{1}{4}\binom{p}{3}\frac{q!}{(q-3)!}\theta^3{a}_{p,q})+\binom{p-2}{5}\frac{(q-2)!}{(q-7)!}\theta^5\Re{{a}_{p-2,q-2}}\\
		&+\binom{p-1}{6}\frac{(q-1)!}{(q-7)!}\theta^6\times\frac{1}{2}pq\theta{a}_{p,q}-\binom{p}{7}\frac{q!}{(q-7)!}\theta^7{a}_{p,q}\\
		=&\binom{p-6}{1}\frac{(q-6)!}{(q-7)!}\theta\Re{{a}_{p-6,q-6}}+\frac{1}{2}\binom{p-4}{3}\frac{(q-4)!}{(q-7)!}\theta^3\Re{{a}_{p-4,q-4}}\\
		&+\binom{p-2}{5}\frac{(q-2)!}{(q-7)!}\theta^5\Re{{a}_{p-2,q-2}}+\frac{17}{4}\binom{p}{7}\frac{q!}{(q-7)!}\theta^7{{a}_{p,q}},
	\end{align*}
	so
	\begin{align*}
		\Im{a_{p-7,q-7}}=&\frac{1}{2}\binom{p-6}{1}\frac{(q-6)!}{(q-7)!}\theta\Re{{a}_{p-6,q-6}}+\frac{1}{4}\binom{p-4}{3}\frac{(q-4)!}{(q-7)!}\theta^3\Re{{a}_{p-4,q-4}}\\
		&+\frac{1}{2}\binom{p-2}{5}\frac{(q-2)!}{(q-7)!}\theta^5\Re{{a}_{p-2,q-2}}+\frac{17}{8}\binom{p}{7}\frac{q!}{(q-7)!}\theta^7{{a}_{p,q}}.
	\end{align*}
	$$\ldots$$
	From the above calculation  we conjecturally set
	\begin{equation}\label{imapql}
		\Im{a_{p-2l-1,q-2l-1}}=\sum_{k=0}^la_k\binom{p-2l+2k}{2k+1}\frac{(q-2l+2k)!}{(q-2l-1)!}\theta^{2k+1}\Re{a_{p-2l+2k,q-2l+2k}},
	\end{equation}
	for a sequence $\{a_n\}_{n\geq0}$, and we can see that $a_0=\frac{1}{2}$, $a_1=\frac{1}{4}$, $a_2=\frac{1}{2}$, $a_3=\frac{17}{8}$. Note that
	\begin{align*}
		2\Im{a_{p-2l-1,q-2l-1}}=&\sum_{k=1}^{l}(-1)^{k-1}\binom{p-2l+2k-1}{2k}\frac{(q-2l+2k-1)!}{(q-2l-1)!}\theta^{2k}\Im{a_{p-2l-1+2k,q-2l-1+2k}}\\
		&+\sum_{k=0}^{l}(-1)^{k}\binom{p-2l+2k}{2k+1}\frac{(q-2l+2k)!}{(q-2l-1)!}\theta^{2k+1}\Re{a_{p-2l+2k,q-2l+2k}},
	\end{align*}
	apply \eqref{imapql} to the above equality and compare the coefficients of $\Re{a_{p-2l+2k,q-2l+2k}}$ where $0\leq k\leq l$, we have for $m\leq l$,
	\begin{equation}
		2a_m\binom{p-2l+2m}{2m+1}=\sum_{k=1}^{m}(-1)^{k-1}\binom{p-2l+2k-1}{2k}\binom{p-2l+2m}{2m-2k+1}a_{m-k}+(-1)^m\binom{p-2l+2m}{2m+1},
	\end{equation}
	after simplification, we have
	\begin{equation}\label{bna}
		a_m=\sum_{k=0}^{m}(-1)^{k+1}\binom{2m+1}{2k}a_{m-k}+(-1)^m.
	\end{equation}
	Similarly, we have
	\begin{equation}
		\Im{a_{p-2l,q-2l}}=\sum_{k=0}^{l-1}a_k\binom{p-2l+1+2k}{2k+1}\frac{(q-2l+1+2k)!}{(q-2l)!}\theta^{2k+1}\Re{a_{p-2l+1+2k,q-2l+1+2k}},
	\end{equation}
	and in summary, for $l\in\N$, conjecturally we have
	\begin{equation}\label{pqimaf}
		\Im{a_{p-l,q-l}}=\sum_{k=0}^{[\frac{l-1}{2}]}a_k\binom{p-l+1+2k}{2k+1}\frac{(q-l+1+2k)!}{(q-l)!}\theta^{2k+1}\Re{a_{p-l+1+2k,q-l+1+2k}}.
	\end{equation}
	The first few elements of $\{a_n\}_{n\geq0}$ satisfying \eqref{bna} is
	$$\frac{1}{2},\frac{1}{4},\frac{1}{2},\frac{17}{8},\frac{31}{2},\frac{691}{4},\frac{5461}{2},\frac{929569}{16},\frac{3202291}{2},\frac{221930581}{4},\frac{4722116521}{2},\ldots$$
	and with the help of the On-Line Encyclopedia of Integer Sequences (OEIS), I find that the general term formula of $\{a_n\}_{n\geq0}$ is
	\begin{equation}
		a_n=\frac{(-1)^{n}(2^{2n+2}-1)B_{2n+2}}{n+1}
	\end{equation}
	where $\{B_n\}_{n\geq 0}$ is the Bernoulli number series. We prove that this is indeed the case, even for the smooth case, i.e., \eqref{pqima} and \eqref{pqimaf} are both hold.\\
	\emph{Proof of Theorem \ref{sa}.} For \begin{equation}
		T=\sum_{p,q=0}^\infty a_{p,q}x^py^q\in A^\infty(\R_\theta^2),
	\end{equation}
	if $T=T^*$, then
	\begin{equation}
		\sum_{p,q=0}^\infty a_{p,q}x^py^q=\sum_{p,q=0}^\infty\bar{a}_{p,q}y^qx^p=\sum_{p,q=0}^\infty\bar{a}_{p,q}\sum_{h=0}^{p}\binom{p}{h}\frac{q!}{(q-p+h)!}a^{p-h}x^hy^{q-p+h},
	\end{equation}
	compare the coefficients, we have
	\begin{equation}\label{pq}
		a_{p,q}=\sum_{h=0}^{\infty}\binom{p+h}{h}\frac{(q+h)!}{q!}\bar{a}_{p+h,q+h}a^h.
	\end{equation}
	Take real and imaginary part of the above equality and note that $a=i\theta$, we have
	\begin{align*}
		\Re{a_{p,q}}=&\sum_{h=0}^\infty\binom{p+2h}{2h}\frac{(q+2h)!}{q!}a^{2h}\Re{a_{p+2h,q+2h}}-\sum_{h=0}^{\infty}\binom{p+2h+1}{2h+1}\frac{(q+2h+1)!}{q!}\\
		&\times a^{2h+1}i\Im{a_{p+2h+1,q+2h+1}}\\
		=&\sum_{h=0}^\infty(-1)^h\binom{p+2h}{2h}\frac{(q+2h)!}{q!}\theta^{2h}\Re{a_{p+2h,q+2h}}+\sum_{h=0}^{\infty}(-1)^h\binom{p+2h+1}{2h+1}\frac{(q+2h+1)!}{q!}\\
		&\times \theta^{2h+1}\Im{a_{p+2h+1,q+2h+1}},\\
		\Im{a_{p,q}}=&-\sum_{h=0}^\infty\binom{p+2h}{2h}\frac{(q+2h)!}{q!}a^{2h}\Im{a_{p+2h,q+2h}}+\frac{1}{i}\sum_{h=0}^{\infty}\binom{p+2h+1}{2h+1}\frac{(q+2h+1)!}{q!}\\
		&\times a^{2h+1}\Re{a_{p+2h+1,q+2h+1}}\\
		=&-\sum_{h=0}^\infty(-1)^h\binom{p+2h}{2h}\frac{(q+2h)!}{q!}\theta^{2h}\Im{a_{p+2h,q+2h}}+\sum_{h=0}^{\infty}(-1)^h\binom{p+2h+1}{2h+1}\frac{(q+2h+1)!}{q!}\\
		&\times\theta^{2h+1}\Re{a_{p+2h+1,q+2h+1}},
	\end{align*}
	so for $\forall k\in\N$,
	\begin{align*}
		&\sum_{h=0}^{\infty}(-1)^h\binom{p+2k+2h+1}{2h+1}\frac{(q+2k+2h+1)!}{(q+2k)!}\theta^{2h+1}\Im{a_{p+2k+2h+1,q+2k+2h+1}}\\
		=&\sum_{h=0}^\infty(-1)^{h}\binom{p+2k+2h+2}{2h+2}\frac{(q+2k+2h+2)!}{(q+2k)!}\theta^{2h+2}\Re{a_{p+2k+2h+2,q+2k+2h+2}}.
	\end{align*}
	Let $\{b_n\}_{n\geq 1}$ be a number series satisfying
	\begin{equation}\label{b}
		\sum_{k=1}^{n}(-1)^{k-1}b_k\binom{p+2n+1}{2n+1-2k}=\binom{p+2n+1}{2n+1}.
	\end{equation}
	Such a series exists. In fact, $b_1=\frac{1}{6}(p+2)(p+1)$ and for $n\geq 2$,
	\begin{equation}
		b_n=\frac{(-1)^n}{p+2n+1}\left(\binom{p+2n+1}{2n+1}-\sum_{k=1}^{n-1}(-1)^{k-1}b_k\binom{p+2n+1}{2n+1-2k}\right).
	\end{equation}
	Then
	\begin{align*}
		&(p+1)(q+1)\theta\Im a_{p+1,q+1}\\
		=&\sum_{h=0}^{\infty}(-1)^h\binom{p+2h+1}{2h+1}\frac{(q+2h+1)!}{q!}\theta^{2h+1}\Im{a_{p+2h+1,q+2h+1}}+\sum_{k=1}^{\infty}b_k\frac{(q+2k)!}{q!}\theta^{2k}\sum_{h=0}^{\infty}(-1)^h\\
		&\times\binom{p+2k+2h+1}{2h+1}\frac{(q+2k+2h+1)!}{(q+2k)!}\theta^{2h+1}\Im{a_{p+2k+2h+1,q+2k+2h+1}}\\
		=&\sum_{h=0}^\infty(-1)^{h}\binom{p+2h+2}{2h+2}\frac{(q+2h+2)!}{q!}\theta^{2h+2}\Re{a_{p+2h+2,q+2h+2}}+\sum_{k=1}^{\infty}b_k\sum_{h=0}^\infty(-1)^{h}\binom{p+2k+2h+2}{2h+2}\\
		&\times\frac{(q+2k+2h+2)!}{q!}\theta^{2k+2h+2}\Re{a_{p+2k+2h+2,q+2k+2h+2}}\\
		=&\sum_{h=0}^\infty(-1)^{h}\binom{p+2h+2}{2h+2}\frac{(q+2h+2)!}{q!}\theta^{2h+2}\Re{a_{p+2h+2,q+2h+2}}+\sum_{k=1}^{\infty}b_k\sum_{h=0}^\infty(-1)^{h}\binom{p+2k+2h+2}{2k+2h+2}\\
		&\times\frac{(2k+2h+2)!p!}{(2h+2)!(p+2k)!}\frac{(q+2k+2h+2)!}{q!}\theta^{2k+2h+2}\Re{a_{p+2k+2h+2,q+2k+2h+2}}\\
		=&\binom{p+2}{2}\frac{(q+2)!}{q!}\theta^2\Re{a_{p+2,q+2}}+\sum_{h=1}^\infty(-1)^h\left(1+\sum_{k=1}^{h}(-1)^{k}b_k\frac{(2h+2)!p!}{(2h-2k+2)!(p+2k)!}\right)\binom{p+2h+2}{2h+2}\\
		&\times\frac{(q+2h+2)!}{q!}\theta^{2h+2}\Re{a_{p+2h+2,q+2h+2}}\\
		=&\sum_{h=0}^\infty(-1)^h\left(1+\sum_{k=1}^{h}(-1)^{k}b_k\frac{(2h+2)!p!}{(2h-2k+2)!(p+2k)!}\right)\binom{p+2h+2}{2h+2}\frac{(q+2h+2)!}{q!}\theta^{2h+2}\Re{a_{p+2h+2,q+2h+2}},
	\end{align*}
	hence for $p,q\in\N$,
	\begin{equation}\label{p1q1}
		\begin{aligned}
			&\Im a_{p+1,q+1}\\
			=&\sum_{h=0}^\infty(-1)^h\left(\frac{1}{2h+2}+\sum_{k=1}^{h}(-1)^{k}b_k\frac{(2h+1)!p!}{(2h-2k+2)!(p+2k)!}\right)\binom{p+2h+2}{2h+1}\frac{(q+2h+2)!}{(q+1)!}\\
			&\times\theta^{2h+1}\Re{a_{p+2h+2,q+2h+2}}.
		\end{aligned}
	\end{equation}
	Let
	\begin{equation}
		a_h:=(-1)^h\left(\frac{1}{2h+2}+\sum_{j=1}^{h}(-1)^{j}b_j\frac{(2h+1)!p!}{(2h-2j+2)!(p+2j)!}\right),
	\end{equation}
	then for $m\in\N$,
	\begin{align*}
		&a_m-\sum_{k=0}^{m}(-1)^{k+1}\binom{2m+1}{2k}a_{m-k}\\
		=&(-1)^m\left(\frac{1}{2m+2}+\sum_{k=1}^{m}(-1)^{k}b_k\frac{(2m+1)!p!}{(2m-2k+2)!(p+2k)!}\right)-\sum_{k=0}^{m}(-1)^{m+1}\binom{2m+1}{2k}\left(\frac{1}{2m-2k+2}\right.\\
		&\left.+\sum_{j=1}^{m-k}(-1)^{j}b_j\frac{(2m-2k+1)!p!}{(2m-2k-2j+2)!(p+2j)!}\right)\\
		=&(-1)^m\left(\frac{1}{2m+2}+\sum_{k=0}^{m}\binom{2m+1}{2k}\frac{1}{2m-2k+2}\right)+(-1)^m\left(\sum_{k=1}^{m}(-1)^{k}b_k\frac{(2m+1)!p!}{(2m-2k+2)!(p+2k)!}\right.\\
		&\left.+\sum_{k=0}^m\sum_{j=1}^{m-k}(-1)^{j}b_j\binom{2m+1}{2k}\frac{(2m-2k+1)!p!}{(2m-2k-2j+2)!(p+2j)!}\right)\\
		=&\frac{(-1)^m}{2m+2}\left(1+\sum_{k=0}^{m}\binom{2m+2}{2k}\right)+(-1)^m(2m+1)!p!\left(\sum_{k=1}^{m}\frac{(-1)^{k}b_k}{(2m-2k+2)!(p+2k)!}\right.\\
		&\left.+\sum_{k=0}^m\frac{1}{(2k)!}\sum_{j=1}^{m-k}\frac{(-1)^{j}b_j}{(2m-2k-2j+2)!(p+2j)!}\right)\\
		=&\frac{(-1)^m2^{2m+1}}{2m+2}+(-1)^m(2m+1)!p!\sum_{k=1}^m\frac{(-1)^kb_k}{(p+2k)!}\left(\frac{1}{(2m-2k+2)!}+\sum_{j=0}^{m-k}\frac{1}{(2j)!(2m-2k-2j+2)!}\right)\\
		=&\frac{(-1)^m2^{2m+1}}{2m+2}+(-1)^m(2m+1)!p!\sum_{k=1}^m\frac{(-1)^k2^{2m-2k+1}b_k}{(2m-2k+2)!(p+2k)!}.
	\end{align*}
	The definition \eqref{b} of $\{b_n\}$ implies
	\begin{align*}
		2(4^n-n-1)\binom{p+2n+2}{p}&=\sum_{m=1}^n\binom{p+2n+2}{2n+1-2m}\binom{p+2m+1}{2m+1}\\
		&=\sum_{m=1}^n\binom{p+2n+2}{2n+1-2m}\sum_{k=1}^{m}(-1)^{k-1}b_k\binom{p+2m+1}{2m+1-2k}\\
		&=\sum_{k=1}^{n}(-1)^{k-1}b_k\sum_{j=1}^{n+1-k}\binom{p+2n+2}{2n-2k-2j+3}\binom{p+2k+2j-1}{2j-1}\\
		&=\sum_{k=1}^{n}\frac{(-1)^{k-1}2^{2n-2k+1}(p+2n+2)!b_k}{(2n-2k+2)!(p+2k)!},
	\end{align*}
	hence
	\begin{align*}
		&a_m-\sum_{k=0}^{m}(-1)^{k+1}\binom{2m+1}{2k}a_{m-k}\\
		=&\frac{(-1)^m2^{2m+1}}{2m+2}+(-1)^m(2m+1)!p!\sum_{k=1}^m\frac{(-1)^k2^{2m-2k+1}b_k}{(2m-2k+2)!(p+2k)!}\\
		=&\frac{(-1)^m2^{2m+1}}{2m+2}-(-1)^m(2m+1)!p!\frac{2(4^m-m-1)}{(p+2m+2)!}\binom{p+2m+2}{p}=(-1)^m.
	\end{align*}
	Finally, from \eqref{p1q1} we can see that for $h\geq1$,
	$$\Im{a_{2h,2h}}=\sum_{k=0}^{\infty}a_k\binom{2h+2k+1}{2k+1}\frac{(2h+2k+1)!}{(2h)!}\theta^{2k+1}\Re{a_{2h+2k+1,2h+2k+1}},$$
	so
	\begin{align*}
		\Im{a_{0,0}}=&-\sum_{h=0}^\infty(-1)^h\binom{2h}{2h}\frac{(2h)!}{0!}\theta^{2h}\Im{a_{2h,2h}}+\sum_{h=0}^{\infty}(-1)^h\binom{2h+1}{2h+1}\frac{(2h+1)!}{0!}\theta^{2h+1}\Re{a_{2h+1,2h+1}}\\
		=&-\Im{a_{0,0}}-\sum_{h=1}^\infty(-1)^h(2h)!\theta^{2h}\sum_{k=0}^{\infty}a_k\binom{2h+2k+1}{2k+1}\frac{(2h+2k+1)!}{(2h)!}\theta^{2k+1}\Re{a_{2h+2k+1,2h+2k+1}}\\
		&+\sum_{h=0}^{\infty}(-1)^h(2h+1)!\theta^{2h+1}\Re{a_{2h+1,2h+1}},\\
		=&-\Im{a_{0,0}}+\sum_{h=0}^{\infty}\left(a_h+\sum_{k=0}^{h}(-1)^{k+1}\binom{2h+1}{2k}a_{h-k}+(-1)^h\right)(2h+1)!\theta^{2h+1}\Re{a_{2h+1,2h+1}}\\
		=&-\Im{a_{0,0}}+2\sum_{h=0}^{\infty}a_h(2h+1)!\theta^{2h+1}\Re{a_{2h+1,2h+1}},
	\end{align*}
	hence
	\begin{equation}
		\Im{a_{0,0}}=\sum_{k=0}^{\infty}a_k(2k+1)!\theta^{2k+1}\Re{a_{2k+1,2k+1}}.
	\end{equation}
	Similarly we can prove that
	\begin{equation}
		\Im{a_{0,1}}=\sum_{k=0}^{\infty}a_k(2k+2)!\theta^{2k+1}\Re{a_{2k+1,2k+2}}
	\end{equation}
	and
	\begin{equation}
		\Im{a_{1,0}}=\sum_{k=0}^{\infty}a_k(2k+2)!\theta^{2k+1}\Re{a_{2k+2,2k+1}},
	\end{equation}
	combine with \eqref{p1q1}, we can see that for $\forall p,q\in\N$, we have
	\begin{equation}
		\Im{a_{p,q}}=\sum_{k=0}^{\infty}a_{k}\binom{p+2k+1}{2k+1}\frac{(q+2k+1)!}{q!}\theta^{2k+1}\Re{a_{p+2k+1,q+2k+1}}.
	\end{equation}
	Finally we need to determine the values of $\{a_n\}_{n\geq 0}$. Recall that the Bernoulli numbers $\{B_n\}_{n\in\N}$ are defined via
	\begin{equation}
		\frac{z}{e^z-1}+\frac{z}{2}-1=\sum_{k=1}^\infty\frac{B_{2k}}{(2k)!}z^{2k},
	\end{equation}
	then
	\begin{equation}
		\frac{2z}{e^{2z}-1}+z-1=\sum_{k=1}^\infty\frac{2^{2k}B_{2k}}{(2k)!}z^{2k},
	\end{equation}
	so
	\begin{equation}
		\sum_{k=1}^\infty\frac{(2^{2k}-1)B_{2k}}{(2k)!}z^{2k}=\frac{2z}{e^{2z}-1}+z-1-\left(\frac{z}{e^z-1}+\frac{z}{2}-1\right)=\frac{z(e^z-1)}{2(e^z+1)},
	\end{equation}
	hence
	\begin{equation}
		\left(2+\sum_{k=1}^\infty\frac{z^k}{k!}\right)\sum_{k=1}^\infty\frac{(2^{2k}-1)B_{2k}}{(2k)!}z^{2k}=\frac{1}{2}\sum_{k=1}^{\infty}\frac{z^{k+1}}{k!}.
	\end{equation}
	For $n\in\N$, compare the coefficients of $z^{2n+2}$ on both sides, we have
	\begin{equation}
		2\frac{(2^{2n+2}-1)B_{2n+2}}{(2n+2)!}+\sum_{k=1}^{n}\frac{(2^{2k}-1)B_{2k}}{(2n+2-2k)!(2k)!}=\frac{1}{2(2n+1)!},
	\end{equation}
	which means
	\begin{equation}\label{aaa}
		(2^{2n+2}-1)B_{2n+2}+\sum_{k=1}^{n+1}\binom{2n+2}{2k}(2^{2k}-1)B_{2k}=n+1.
	\end{equation}
	For $n\in\N$ let 
	\begin{equation}
		c_n:=\frac{(-1)^{n}(2^{2n+2}-1)B_{2n+2}}{n+1},
	\end{equation}
	then \eqref{aaa} implies
	\begin{equation}
		(-1)^n(n+1)c_n+\sum_{k=0}^{n}\binom{2n+2}{2k}(-1)^k(k+1)c_k=n+1,
	\end{equation}
	so
	\begin{equation}\label{ac}
		c_n-\sum_{k=0}^{n}(-1)^{k+1}\binom{2n+1}{2k}c_{n-k}=(-1)^n.
	\end{equation}
	Note that the series $\{a_n\}_{n\geq 0}$ also satisfies \eqref{ac}, so we must have 
	\begin{equation}
		a_n=c_n=\frac{(-1)^{n}(2^{2n+2}-1)B_{2n+2}}{n+1},~\forall n\in\N
	\end{equation}
	and we finish the proof.
	\qed

\end{document}